\pgfplotsset{compat=1.18} 
\definecolor{hanblue}{rgb}{0.27, 0.42, 0.81}
\definecolor{mordantred19}{rgb}{0.68, 0.05, 0.0}
\definecolor{darkgreen}{rgb}{0.0, 0.38, 0.12}
\definecolor{red}{rgb}{0.8, 0.0, 0.0}
\definecolor{green}{rgb}{0.0, 0.5, 0.0}
\definecolor{lightergreen}{rgb}{0.0, 0.6, 0.0}
\newcolumntype{C}[1]{>{\centering\arraybackslash}m{#1}} 
\newcolumntype{K}[1]{>{\centering\arraybackslash}p{#1}}
\declaretheorem[numberwithin=section]{theorem}
\declaretheorem[numberwithin=section]{lemma}
\declaretheorem[numberwithin=section]{corollary}
\declaretheoremstyle[
  headfont=\normalfont\bfseries,
  bodyfont=\normalfont,
  spaceabove=1em plus 0.75em minus 0.25em,
  qed={$\lozenge$},
]{definitionqed}
\declaretheorem[style=definitionqed,numberwithin=section]{definition}
\declaretheorem[style=definitionqed,numberwithin=section]{remark}
\DeclarePairedDelimiterX{\set}[1]{\lbrace}{\rbrace}{\def\given{\;\delimsize\vert\;\allowbreak}#1}
\DeclareMathOperator{\spn}{span}
\DeclareMathOperator{\twin}{Twin}
\DeclareMathOperator*{\argmin}{arg\,min}
\newcommand{\B}{\mathcal{B}}
\newcommand{\di}{\diamond}
\newcommand{\F}{\mathcal{F}}
\renewcommand{\H}{\mathcal{H}}
\renewcommand{\L}{\mathcal{L}}
\newcommand{\M}{\mathcal{M}}
\newcommand{\N}{\mathbb{N}}
\newcommand{\U}{\mathcal{U}}
\newcommand{\V}{\mathcal{V}}
\renewcommand{\P}{\mathcal{P}}
\newcommand{\R}{\mathbb{R}}
\numberwithin{equation}{section}
\newcommand\blfootnote[1]{%
  \begingroup
  \renewcommand\thefootnote{}\footnote{#1}%
  \addtocounter{footnote}{-1}%
  \endgroup
}
\title{Vector-Valued Reproducing Kernel Banach Spaces for Neural Networks and Operators}
\author{Sven Dummer$^{\ast\dagger}$, Tjeerd Jan Heeringa\footnote{Equal contribution.}$^{\ \dagger}$ and Jos\'e A. Iglesias\thanks{Mathematics of Imaging \& AI, Department of Applied Mathematics, University of Twente, the Netherlands\\ \hspace*{5mm}(\texttt{t.j.heeringa{@}utwente.nl, s.c.dummer{@}utwente.nl, jose.iglesias{@}utwente.nl}).}}
\date{}
\begin{document}

\maketitle

\begin{abstract}
    Recently, there has been growing interest in characterizing the function spaces underlying neural networks. While shallow and deep scalar-valued neural networks have been linked to scalar-valued reproducing kernel Banach spaces (RKBS), $\mathbb{R}^d$-valued neural networks and neural operator models remain less understood in the RKBS setting. To address this gap, we develop a notion of adjoint pairs of vector-valued RKBSs (vv-RKBS), which inherently involves an associated reproducing kernel, and prove that every vv-RKBS belongs to such a pair. Our construction extends existing kernel definitions by avoiding restrictive assumptions such as symmetric kernel domains, finite-dimensional output spaces, reflexivity, or separability, while still recovering familiar properties of vector-valued reproducing kernel Hilbert spaces (vv-RKHS). We then show that shallow $\mathbb{R}^d$-valued neural networks are elements of a specific vv-RKBS, namely an instance of an integral vv-RKBS. To also explore the functional structure of neural operators, we analyze the DeepONet and Hypernetwork architectures and demonstrate that they too belong to an integral vv-RKBS. In all cases, we establish a representer theorem, showing that optimization over these function spaces recovers the corresponding neural architectures.
    \\ \\
    \textbf{Keywords: } Reproducing kernel Banach space, vector-valued, neural networks, neural operator, DeepONet, Hypernetwork, representer theorem
\end{abstract}
\blfootnote{2020 Mathematics Subject Classification (MSC): 46E15, 68T07, 46G10, 46E22, 46B10, 26B40}

\section{Introduction}
Neural networks are ubiquitous in applications such as computer vision, medical imaging, and scientific computing. Although they perform remarkably well, the fundamental mathematical understanding of neural networks is incomplete.

One approach to advance this understanding is to study neural networks from a function space perspective. This perspective characterizes networks as elements of a function space and rigorously analyzes the analytical properties of the space. For instance, representer theorems are investigated that show that neural network architectures solve supervised optimization problems in these infinite-dimensional function spaces. Examples include spaces inspired by variational splines \cite{parhi2021banach, parhi2022kinds} and the variation (or Barron) space \cite{wojtowytsch2020banach, ma2022barron, bach2017breaking, wojtowytsch2022representation, shenouda2024variation, bartolucci2024neural, heeringa2025deep, wang2024hypothesis, bartolucci2023understanding}. Both belong to the broader class of Reproducing Kernel Banach Spaces (RKBSs) \cite{canu2009functional, zhang2009reproducing, georgiev2013construction, lin2022reproducing, ikeda2022koopman}, which strictly generalizes the more familiar Reproducing Kernel Hilbert Spaces (RKHS) \cite{steinwart2024reproducing}. In particular, variation/Barron spaces are examples of integral RKBSs \cite{spek2025duality}. In this paper, we focus on situations in which the outputs belong to a vector space which is possibly infinite-dimensional, and we refer to the resulting function spaces as vector-valued RKBS (vv-RKBS). Given an output vector space $\U$ and input domain $X$, the integral vv-RKBSs are defined via a feature map $\Phi \colon X \to \L(\F; \U)$ with $\F =\M(\Omega; \U)$ the space of regular countably additive $\U$-valued measures of bounded variation on the weight domain $\Omega$. Specifically, the feature map and resulting functions are defined by integrating a feature function $\phi \colon X \times \Omega \to \R$ against the measure $\mu \in \M(\Omega; \U)$:
\begin{equation}
(A_{\Omega \to X}\mu)(x) = \Phi(x)\mu = \int_\Omega \phi(x,w)  d\mu(w).
\label{eq:integral_rkbs_intro}
\end{equation}
In the scalar case $\U = \mathbb{R}$, Spek et al.\ \cite{spek2025duality} discuss the construction of a reproducing kernel for the space $\mathcal{B}$ of functions of the form $A_{\Omega \to X}\mu$. Their construction utilizes a function space $\B^\di$ that is based on functions defined via a measure $\rho \in \mathcal{M}(X)$ and interchanging the roles of the weight and input domains in \eqref{eq:integral_rkbs_intro}
\begin{equation}
(A_{X \to \Omega}\rho)(w) = \int_X \phi(x,w) d\rho(x).
\end{equation}
The reproducing kernel $K \colon X \times \Omega \to \mathbb{R}$ is given by $\phi(x,w)$ and satisfies
\begin{equation}
f(x) = \braket{\phi(x, \cdot)}{f}_\B, \quad g(w) = \braket{g}{\phi(\cdot, w)}_\B
\end{equation}
with $f \in \B$, $g\in \B^\di$, and $\braket{\cdot}_\B \colon \B^\di \times \B \to \R$ a duality pairing. This generalizes the familiar reproducing property from Reproducing
Kernel Hilbert Spaces to the Banach space setting.

Most existing theory focuses on vector-input, scalar-output networks. In contrast, $\R^d$-valued neural networks, and especially those with outputs in a general Banach space, are far less studied. Besides Bartolucci et al.\ \cite{bartolucci2024neural}, the few exceptions \cite{wang2024hypothesis, shenouda2024variation, parhi2022kinds, korolev2022two} either map to $\R^d$ or restrict activation functions. Except for Wang et al.\ \cite{wang2024hypothesis}, these works do not discuss the existence or role of a reproducing kernel, which is central to the RKBS perspective.

This gap is especially relevant for neural operator methods \cite{boulle2024mathematical}, which map functions to functions and naturally require Banach-valued outputs. Examples include the Fourier Neural Operator (FNO) \cite{li2021fourier}, Graph Neural Operator (GNO) \cite{anandkumar2019neural}, Convolutional Neural Operator (CNO) \cite{raonic2023convolutional}, DeepONet \cite{lu2021learning}, POD-DeepONet \cite{lu2022comprehensive}, NOMAD \cite{seidman2022nomad}, and PCA-NET \cite{bhattacharya2021model}. 

Many neural operators can be interpreted as conditional implicit neural representations (INRs) \cite{sitzmann2020implicit, dummer2024rda}. In this framework, a network takes a spatial or spatiotemporal coordinate \(x \in \mathbb{R}^{d_x}\) and a conditioning vector \(z \in \mathbb{R}^{d_z}\) to represent a family of output functions via
\begin{equation}
    f_\theta \colon \mathbb{R}^{d_x + d_z} \to \mathbb{R}^d.
\end{equation}
The DeepONet, POD-DeepONet, NOMAD, and PCA-NET can immediately be interpreted as conditional INRs. The FNO can be viewed as a DeepONet when evaluated on a fixed discretized grid \cite{kovachki2021universal}, and can hence be interpreted as an INR. The CNO \cite{raonic2023convolutional} projects inputs to a bandlimited space, processes them, and upsamples back to another bandlimited space. These projection steps yield a kernel-method interpretation. Methods such as RONOM \cite{dummer2026ronom} and the approach of Batlle et al.\ \cite{batlle2024kernel} explicitly use RKHS structures and can also be seen as kernel-based INRs. This makes the INR structure a natural framework for investigating neural operators from a function-space perspective. 

In this work, we provide a unified framework for these approaches. This framework supports vector-valued Reproducing Kernel Banach Spaces (vv-RKBS) mapping to arbitrary Banach spaces and inherently includes a reproducing kernel.  Specifically, we define a kernel $K \colon X \times \Omega \to \twin(\U, \U^\di)$, where $(\U, \U^\di)$ form a dual pair and $\twin(\U, \U^\di)$ extends the classical use of bounded linear operators $\L(\U)$ in vv-RKHS to the vv-RKBS setting. 

This kernel structure differs from existing vv-RKBS kernel constructions \cite{zhang2013vector, chen2019vector, lin2021multi, combettes2018regularized}, which require $\Omega = X$ and makes them inapplicable to neural networks. For approaches dealing with finite-dimensional outputs $\U = \R^d$ \cite{chen2019vector, lin2021multi}, the kernel maps to the space of matrices $\L(\U)=\L(\R^d)$. Approaches that allow infinite-dimensional outputs \cite{zhang2013vector, combettes2018regularized} instead map to bounded, but not necessarily linear, operators from $\U$ or $\U^*$ to $\U$. The latter does not capture the linear structure of the space $\L(\U)$ that is used in vv-RKHS. Furthermore, these approaches for infinite-dimensional output spaces rely on reflexivity and, in some cases, separability of the RKBS, its feature space $\F$, or the output space. These properties are used to construct the kernel and prove representer theorems. In contrast, our framework imposes no reflexivity or separability assumptions. 

This allows us to handle, for instance, feature spaces $\F$ that are not reflexive. In particular, to connect the vv-RKBS setting with neural networks and neural operators, we consider the integral vv-RKBS introduced by Bartolucci et al.\ \cite{bartolucci2024neural} and defined in \eqref{eq:integral_rkbs_intro} using a feature space of vector-valued measures. Most existing works on $\R^d$- and vector-valued neural networks \cite{wang2024hypothesis, shenouda2024variation, parhi2022kinds, korolev2022two} either map to $\R^d$ or restrict to homogeneous activation functions. The integral vv-RKBS construction avoids these limitations as it supports arbitrary Banach output spaces, allows non-homogeneous activation functions, and still yields representer theorems. Beyond the work of Bartolucci et al., we also provide the explicit kernel structure for these spaces and establish a representer theorem for infinite-dimensional output spaces. In addition, to explore the function space structure of neural operators, we utilize the previously discussed connection with conditional INRs. In particular, we construct an integral vv-RKBS for hypernetworks and DeepONets and prove a representer theorem.  

\subsection{Contributions}
Our main contributions can be summarized as follows:
\begin{itemize}
    \item We extend the kernel definition of vv-RKBSs to allow asymmetric kernel domains and to avoid structural assumptions such as reflexivity and separability. In addition, we demonstrate that several classical properties of vv-RKHSs still extend to this more general setting.
    \item We relate our vv-RKBS pairs and their reproducing kernels to existing formulations in the scalar RKBS setting. In particular, we show that our definition either recovers the corresponding vector-valued extensions directly or, with additional assumptions, recovers stronger versions.
    \item We further develop the integral and neural vv-RKBS framework introduced in Bartolucci et al.\ \cite{bartolucci2024neural} by analyzing its reproducing kernel structure and establishing a general representer theorem.
    \item We embed $\R^d$-valued neural networks, DeepONets, and hypernetworks into the integral vv-RKBS framework and obtain representer theorems for them. Together, these ensure a clear bridge between vector-valued neural architectures and the kernel structure of RKBSs.
\end{itemize}

\subsection{Paper outline}
Section \ref{sec:vv-RKHS} reviews vector-valued reproducing kernel Hilbert spaces (vv-RKHS) for comparison with our vector-valued reproducing kernel Banach space (vv-RKBS) definitions. Section \ref{sec:vv-RKBS_definition} introduces the general definition of a vv-RKBS through bounded point evaluation functionals and feature maps. Section \ref{sec:adjoint_vv-RKBS_definition} then develops the notion of adjoint vv-RKBS pairs, which naturally incorporate reproducing kernels. Their main properties are examined in Section \ref{sec:adjoint_vv-RKBS_properties}, where we show, among other results, that every space defined via bounded point evaluations or feature maps admits a reproducing kernel within some adjoint pair.

Section \ref{sec:integral_and_neural_vv-RKBS} turns to adjoint vv-RKBS pairs tailored to neural networks, namely the integral and neural vv-RKBS pairs. After recalling the scalar-valued setting, we extend to the vector-valued case, establish key properties, and prove a general representer theorem. Finally, Section \ref{sec:vv-NNs_HypNets_DeepONets} specializes this framework to $\R^d$-valued neural networks, DeepONets, and hypernetworks, and provides representer theorems for them. 

In Appendix \ref{sec:notation} we provide a table of notation to facilitate the reading of the paper. Appendix \ref{sec:additionalresults} contains some additional technical results that are used in the main text, but which would otherwise interrupt the narrative. 

\section{Vector-valued Reproducing Kernel Hilbert Spaces} \label{sec:vv-RKHS}
The classical definition of Reproducing Kernel Hilbert Spaces (RKHS) in terms of bounded evaluation functionals can be extended to the vector-valued case.
\begin{definition}[Vector-valued Reproducing Kernel Hilbert space]
Let $\H$ be a Hilbert space of functions over a set $X$ mapping to a Hilbert space $\U$. $\H$ is a vector-valued Reproducing Kernel Hilbert space (vv-RKHS) if point evaluations are bounded functionals, i.e. for all $x\in X$
\begin{equation}
    \norm{f(x)}_\U \leq C_x\norm{f}_\H
\end{equation}
holds for all $f\in \H$ with the constant $C_x\geq 0$ depending on $x$ but not on $f$.
\label{def:RKHS_bounded_point_eval}
\end{definition}
Just like the scalar RKHS, the vv-RKHS has an equivalent definition in terms of a kernel.

\begin{definition}[Kernel definition vv-RKHS]
A Hilbert space $\H$ of functions over a set $X$ mapping to a Hilbert space $\U$ is a vv-RKHS if and only if there exists a kernel $K \colon X \times X \to \L(\U)$ such that for all $x \in X$ and $u \in \U$:
\begin{subequations}
\begin{align}
    & K(x, \cdot)u \in \H \\
    & \braket{u}{f(x)}_\U = \braket{K(x, \cdot)u }{f}_\H
\end{align}
\end{subequations}
\label{def:RKHS_kernel}
\end{definition}

\begin{theorem}
Definitions~\ref{def:RKHS_bounded_point_eval} and \ref{def:RKHS_kernel} are equivalent.
\end{theorem}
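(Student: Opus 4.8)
The plan is to prove the two implications separately, with the Riesz representation theorem doing the essential work in the direction from bounded point evaluations to the existence of a kernel.

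First I would show that Definition~\ref{def:RKHS_kernel} implies Definition~\ref{def:RKHS_bounded_point_eval}. Assuming a kernel $K$ as in the kernel definition exists, I fix $x \in X$ and $u \in \U$ and test the reproducing identity against the kernel section itself: taking $f = K(x,\cdot)u$ gives $\norm{K(x,\cdot)u}_\H^2 = \braket{u}{K(x,x)u}_\U \leq \norm{K(x,x)}_{\L(\U)}\norm{u}_\U^2$, hence $\norm{K(x,\cdot)u}_\H \leq \norm{K(x,x)}_{\L(\U)}^{1/2}\norm{u}_\U$. Combining the reproducing identity with the Cauchy--Schwarz inequality in $\H$ then yields $\abs{\braket{u}{f(x)}_\U} \leq \norm{K(x,\cdot)u}_\H \norm{f}_\H \leq \norm{K(x,x)}_{\L(\U)}^{1/2}\norm{u}_\U\norm{f}_\H$, and taking the supremum over $\norm{u}_\U \leq 1$ gives $\norm{f(x)}_\U \leq C_x \norm{f}_\H$ with $C_x = \norm{K(x,x)}_{\L(\U)}^{1/2}$. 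This direction is purely computational.

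For the converse I assume point evaluations are bounded, i.e.\ each evaluation operator $E_x \colon \H \to \U$, $E_x f = f(x)$, satisfies $\norm{E_x f}_\U \leq C_x\norm{f}_\H$. For fixed $x \in X$ and $u \in \U$, the functional $L_{x,u}(f) := \braket{u}{f(x)}_\U = \braket{u}{E_x f}_\U$ is linear in $f$ and bounded, since $\abs{L_{x,u}(f)} \leq \norm{u}_\U \norm{E_x f}_\U \leq C_x \norm{u}_\U \norm{f}_\H$. By the Riesz representation theorem there is a unique element of $\H$, which I define to be $K(x,\cdot)u$, such that $\braket{u}{f(x)}_\U = \braket{K(x,\cdot)u}{f}_\H$ for all $f \in \H$; this is exactly the required reproducing identity, and membership $K(x,\cdot)u \in \H$ holds by construction. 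The Riesz isometry together with the functional bound gives $\norm{K(x,\cdot)u}_\H \leq C_x\norm{u}_\U$. Uniqueness in Riesz, together with the dependence of $L_{x,u}$ on $u$ and the (conjugate-)linearity of the Riesz map, shows that $u \mapsto K(x,\cdot)u$ is linear, so it remains only to assemble the operator-valued kernel.

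Finally I would set $K(x,y)u := (K(x,\cdot)u)(y) = E_y\big(K(x,\cdot)u\big)$ and verify $K(x,y) \in \L(\U)$. Linearity in $u$ follows from the previous step, while boundedness follows by chaining the two evaluation estimates, $\norm{K(x,y)u}_\U = \norm{E_y(K(x,\cdot)u)}_\U \leq C_y \norm{K(x,\cdot)u}_\H \leq C_x C_y\norm{u}_\U$, so that $\norm{K(x,y)}_{\L(\U)} \leq C_x C_y$. I expect the main obstacle to be precisely this last bookkeeping: checking that the pointwise-in-$u$ Riesz representers genuinely assemble into a well-defined operator-valued map $K \colon X \times X \to \L(\U)$ (linearity and uniform boundedness in $u$, with the care needed for conjugate-linearity in the complex case), rather than any single hard estimate. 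Once this is in place, each definition has been shown to produce the defining data of the other, establishing the equivalence.
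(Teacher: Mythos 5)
Your proof is correct and follows essentially the same approach as the paper: the Riesz representation theorem applied to the bounded functional $f \mapsto \braket{u}{f(x)}_\U$ yields the kernel in one direction, and the reproducing identity plus Cauchy--Schwarz with $C_x = \norm{K(x,x)}_{\L(\U)}^{1/2}$ gives bounded point evaluations in the other. The only cosmetic difference is that you obtain $\norm{K(x,\cdot)u}_\H \leq C_x\norm{u}_\U$ directly from the Riesz isometry, whereas the paper derives it by a self-bounding application of the reproducing property followed by division; both are valid.
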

\begin{proof}
Consider the bilinear functional
\begin{equation}
    T_x\colon \U\times \H \to \R,\; (u,f)\mapsto \braket{u}{f(x)}_\U.
\end{equation}
Fixing either argument yields a linear functional in the other, and these functionals are bounded since
\begin{equation}
    \braket{u}{f(x)}_\U \leq \norm{u}_\U \norm{f(x)}_\U \leq C_x \norm{f}_\H \norm{u}_\U.
\end{equation}
Hence, the map $f \mapsto T_x(u, f)$ defines a bounded linear functional on the Hilbert space $\H$. By the Riesz representation theorem, there exists $K(x, \cdot)u \in \H$ such that $T_x(u, f) = \braket{K(x, \cdot)u}{f}_\H$. Moreover, by linearity of $T_x(u ,f)$ in $u$, we get $T_x(u_1+\lambda u_2, f) = \braket{K(x, \cdot)u_1 + \lambda K(x, \cdot)u_2}{f}_\H$. Hence, $K(x, \cdot)(u_1 + \lambda u_2) = K(x, \cdot)u_1 + \lambda K(x, \cdot)u_2$, meaning that $K(x, \cdot) \colon \U \to \H$ is linear. Since $\H$ is a Hilbert space of functions and $K(x, \cdot)u \in \H$, we can evaluate at another value $y \in X$. This gives us $K(x, y) \colon \U \to \U$. Note that
\begin{equation}
    \norm{K(x,\cdot)u}_\H^2 = \braket{K(x, \cdot) u}{K(x, \cdot)u}_\H = \braket{u}{K(x, x)u}_\U \leq \norm{u}_\U \norm{K(x,x)u}_\U \leq \norm{u}_\U C_x \norm{K(x, \cdot) u}_\H.
\end{equation}
Dividing both sides by $\norm{K(x,\cdot)u}_\H$ gives $\norm{K(x,\cdot)u}_\H \leq C_x \norm{u}_\U$. Using this yields
\begin{equation}
    \norm{K(x,y) u}_\U \leq C_y \norm{K(x, \cdot) u}_\H \leq C_y C_x \norm{u}_\U,
\end{equation}
which shows that $K(x,y) \in \L(\U)$. Summarizing, these observations regarding the Riesz representation yield the kernel definition.

For the converse, observe that we can take $C_x=\sqrt{\norm{K(x,x)}_{\L(\U)}} = \sqrt{\sup_{\norm{u}_\U\leq 1}\norm{K(x,x)u}_\U} < \infty$ since 
\begin{equation}
\begin{split}
    \norm{f(x)}_\U & = \sup_{\norm{u}_\U\leq 1}\abs{\braket{u}{f(x)}_\U} \\&= \sup_{\norm{u}_\U\leq 1}\abs{\braket{K(x,\cdot)u}{f}_\H} \\&\leq \norm{f}_\H\sup_{\norm{u}_\U\leq 1}\norm{K(x,\cdot)u}_\H \\
    & = \norm{f}_\H \sup_{\norm{u}_\U\leq 1}\sqrt{\braket{K(x,\cdot)u}_\H} \\
    & =  \norm{f}_\H \sup_{\norm{u}_\U\leq 1}\sqrt{\braket{K(x,x)u}{u}_\U} \\
    & \leq \norm{f}_\H \sup_{\norm{u}_\U\leq 1}C_x \norm{u}_\U \\ &= C_x \norm{f}_\H.\qedhere
\end{split}    
\end{equation}
\end{proof}

The kernel $K$ in Definition \ref{def:RKHS_kernel} satisfies
\begin{equation}
\sum_{i,j} \braket{K(x_i, x_j)u_i}{u_j}_\U = \sum_{i,j} \braket{K(x_i, \cdot)u_i}{K(x_j, \cdot) u_j}_\H = \norm{\sum_i K(x_i, \cdot)u_i}_\H^2   \geq 0
\end{equation}
for all $(x_i, u_i) \in X \times \U$, and
\begin{equation}
\begin{split}
    \braket{K(x,y)^* u}{v}_\U & = \braket{u}{K(x,y)v}_\U = \braket{K(y, \cdot) u}{K(x, \cdot) v}_\H \\
     & = \braket{K(x, \cdot) v}{K(y, \cdot) u}_\H = \braket{v}{K(y, x) u}_\U = \braket{K(y, x)u}{v}_\U.
\end{split}    
\end{equation}
for all $(x, u),(y, v) \in X \times \U$. Hence, each vv-RKHS kernel is of positive type (see Definition 2.2 in Carmeli et al.\ \cite{carmeli2006vector}) and $K(x,y)^* = K(y,x)$. Each such kernel $K \colon X \times X \to \L(\U)$ defines a unique vv-RKHS of $\U$-valued functions (see Proposition 2.3 in Carmeli et al.\ \cite{carmeli2006vector}), which is similar to the classical result by Aronszajn \cite{aronszajn1950theory} for scalar-valued RKHS. Therefore, any vv-RKHS induces a symmetric positive semi-definite kernel, and every symmetric positive semi-definite kernel induces a vv-RKHS. In particular, starting with $\H_s$ a scalar-valued RKHS with kernel $K_s:X \times X \to \R$ and $\U$ any Hilbert space, one can canonically define a kernel 
\begin{equation}\label{eq:vv-RKHS-from-scalar}K(\cdot,\cdot) = K_s(\cdot, \cdot) \mathrm{Id}_\U\end{equation} 
which then induces a vv-RKHS $\H$ of functions $f \colon X \to \U$. 

Another useful vv-RKHS characterization that is often used in machine learning is in terms of feature maps \cite{carmeli2006vector}.

\begin{definition}[Feature map definition vv-RKHS]
    A Hilbert space $\H$ of functions from $X$ to a Hilbert space $\U$ is a vv-RKHS if and only if there exists a Hilbert space $\F$ and a map $\Phi \colon X \to \L(\F, \U)$ such that for $(A\mu)(x) = \Phi(x)\mu$:
    \begin{enumerate}
        \item $\H = \{ A\mu \colon x \mapsto (A \mu)(x) \mid \mu \in \F\} \cong \F / \mathcal{N}(A)$,
        \item $\norm{f}_\H = \inf\left(\norm{\mu}_\F \mid \mu \in \F, f = A\mu \right)$. \qedhere
    \end{enumerate}
    \label{def:RKHS_feature}
\end{definition}

\section{Vector-valued Reproducing Kernel Banach Spaces} \label{sec:vv-RKBS}
While functions in a vv-RKHS map to a Hilbert space, functions in a vector-valued reproducing kernel Banach space (vv-RKBS) map to a Banach space. This section first introduces the standard definitions of vector-valued RKBSs, which do not assume any kernel structure. We subsequently define adjoint pairs of vv-RKBSs to introduce the reproducing kernel structure. Finally, we discuss properties of these adjoint pairs that parallel known results for vv-RKHSs. In particular, we show that every vv-RKBS has a reproducing kernel for some adjoint pair.

\subsection{Vector-valued RKBS definition}  \label{sec:vv-RKBS_definition}
An easy way to change a vv-RKHS into a vv-RKBS is to adjust Definition \ref{def:RKHS_bounded_point_eval} from the Hilbert space setting to the Banach space setting. 
\begin{definition}[Vector-valued Reproducing Kernel Banach space]
Let $\B$ be a Banach space of functions over a set $X$ mapping to a Banach space $\U$. $\B$ is a vector-valued Reproducing Kernel Banach space (vv-RKBS) if point evaluations are bounded functionals, i.e. for all $x\in X$
\begin{equation}
    \norm{f(x)}_\U \leq C_x\norm{f}_\B
\end{equation}
holds for all $f\in \B$ with the constant $C_x\geq 0$ depending on $x$ but not on $f$.
\label{def:RKBS_bounded_point_eval}
\end{definition}
Similarly, Definition \ref{def:RKHS_feature}, where the vv-RKHS is defined via a feature map, can be extended by replacing the Hilbert spaces in the vv-RKHS definition by Banach spaces. This yields a definition of vv-RKBS equivalent to Definition \ref{def:RKBS_bounded_point_eval} \cite{bartolucci2024neural}.
\begin{definition}[Feature map definition vv-RKBS]
    A Banach space $\B$ of functions from $X$ to a Banach space $\U$ is a vv-RKBS if and only if there exists a Banach space $\F$ and a map $\Phi \colon X \to \L(\F, \U)$ such that for $(A\mu)(x) = \Phi(x)\mu$:
    \begin{enumerate}
        \item $\B = \{ A\mu \colon x \mapsto (A \mu)(x) \mid \mu \in \F\} \cong \F / \mathcal{N}(A)$,
        \item $\norm{f}_\B = \inf\left(\norm{\mu}_\F \mid \mu \in \F, f = A\mu \right)$. \qedhere
    \end{enumerate}
    \label{def:RKBS_feature}
\end{definition}
\begin{remark}
    The scalar-valued RKBS definition \cite{bartolucci2023understanding} follows by setting $\norm{\cdot}_\R = |\cdot|$ in Definition~\ref{def:RKBS_bounded_point_eval}. In Definition~\ref{def:RKBS_feature}, we interpret $\L(\F, \R)$ as the dual space $\F^*$ and write $\Phi(x)\mu = \braket{\Phi(x)}{\mu}$, where $\braket{\cdot}{\cdot}$ denotes the canonical duality pairing.
\end{remark}

\subsection{Adjoint vv-RKBS pairs} \label{sec:adjoint_vv-RKBS_definition}
Due to the absence of the Riesz representation theorem, there is no immediate kernel-based definition of an RKBS. To address this, many works impose reproducing kernel assumptions. For scalar RKBSs, some adopt a $\delta$-dual approach, assuming that the closure of the point evaluation functionals is isometrically isomorphic to a Banach space of functions or another RKBS \cite{xu2023sparse, wang2024sparse}. Others assume the existence of a Banach space of functions or RKBS that is isometrically embedded in the dual space $\B^*$ of the RKBS $\B$ \cite{zhang2009reproducing, spek2025duality}. A third group of works uses less strict assumptions and only assumes the existence of another Banach space of functions (e.g., an RKBS) $\B^\di$ and a continuous bilinear form on $\B^\di \times \B$ \cite{georgiev2013construction, lin2022reproducing, ikeda2022koopman, heeringa2025deep}. Based on this form, they define a reproducing kernel property for $\B$, and optionally also for $\B^\di$.

In all these cases, the reproducing property is required for the original space $\B$, but is not always assumed for the $\delta$-dual, the embedded function space in $\B^*$, or the space $\B^\di$ appearing in the bilinear form. In contrast, we focus on the case where both $\B$ and its counterpart form an adjoint RKBS pair, meaning both are RKBSs and possess reproducing properties. We begin with the scalar-valued setting. Here, adjoint RKBS pairs and the corresponding kernel are defined through a continuous bilinear form, specifically a dual pairing between the RKBS $\B$ and another RKBS $\B^\di$. We then extend this framework to vector-valued RKBSs. In the next section, we show that, under additional assumptions, the dual pairing approach recovers the aforementioned isometric isomorphism-based formulations.

\begin{definition}[Dual pair \cite{fabian2011banach}]
A dual pair of Banach spaces $(\B,\B^\diamond)$ is two Banach spaces $\B$ and $\B^\diamond$ with a bilinear map 
\begin{equation}
    \braket{\cdot}{\cdot}_\B\colon \B^\diamond\times \B\to \R,\; (g,f)\mapsto \braket{g}{f}_\B
\end{equation}
that satisfies
\begin{subequations}
\begin{align}
    \braket{g}{f}_\B = 0 \; \forall g \in \B^\di &\implies f = 0 \label{nondegen_cond_pairing_1} \\
    \braket{g}{f}_\B = 0 \; \forall f \in \B\phantom{^\di} & \implies g = 0 \label{nondegen_cond_pairing_2}
\end{align}    
\end{subequations}
This bilinear map is called the pairing corresponding to $(\B,\B^\diamond)$ or, when no confusion arises, simply the pairing. Furthermore, the pairing is called continuous if the bilinear map is continuous. 
\end{definition}

\begin{definition}[Adjoint pair of scalar RKBS]
    Let $\B$ be an RKBS of functions mapping from a set $X$ to $\R$ and let $\B^\di$ be an RKBS of functions mapping from a set $\Omega$ to $\R$. Moreover, let $\braket{\cdot}{\cdot}_\B \colon \B^\di \times \B \to \R$ be a continuous dual pairing and $K \colon X \times \Omega \to \R$. We call $K$ a reproducing kernel for $\B$ when $K(x, \cdot) \in \B^\di$ for all $x \in X$ and
    \begin{equation}
        f(x) = \braket{K(x, \cdot)}{f}_\B
    \end{equation}
    for all $x \in X$ and $f \in \B$. If additionally $K(\cdot, w) \in \B$ for all $w \in \Omega$ and 
    \begin{equation}
        g(w) = \braket{g}{K(\cdot,w)}_\B
    \end{equation}
    for all $w \in \Omega$ and $g\in\B^\di$, then we call $\B^\di$ an adjoint RKBS of $\B$ and $(\B, \B^\di)$ an adjoint pair of RKBSs.
    \label{def:adjoint_pair_scalar_RKBS}
\end{definition}
\begin{remark}
    While some works use a dual pairing \cite{lin2022reproducing, heeringa2025deep}, others disregard the non-degeneracy conditions \eqref{nondegen_cond_pairing_1} and \eqref{nondegen_cond_pairing_2} and consider only a (continuous) bilinear form \cite{georgiev2013construction, ikeda2022koopman}. In principle, one can work with a general bilinear form instead of a duality pairing, since the non-degeneracy conditions are implicitly enforced by the reproducing properties.

    To see this, suppose there exists $f \in \B$ such that $\braket{g}{f}_\B = 0$ for all $g \in \B^\di$. Taking $g = K(x, \cdot)$ yields $f(x) = \braket{K(x, \cdot)}{f}_\B = 0$ for all $x \in X$, and hence $f = 0$. This confirms that condition \eqref{nondegen_cond_pairing_1} must hold. A similar argument shows that condition \eqref{nondegen_cond_pairing_2} is also necessary.
    
    Thus, any bilinear form defining a reproducing property must be a duality pairing. Nevertheless, we retain non-degeneracy as an explicit assumption in our definition to emphasize its necessity and to avoid the misconception that an arbitrary bilinear form is sufficient.
\end{remark}

Definition \ref{def:adjoint_pair_scalar_RKBS} only applies to scalar-valued RKBSs, as vector-valued functions $f$ cannot be represented by $\braket{K(x,\cdot)}{f}_\B \in \R$. To resolve this issue, we can take an approach similar to the vv-RKHS setting in Definition \ref{def:RKHS_kernel}. Instead of considering the function values directly, the vv-RKHS considers the inner products. However, since we are dealing with functions $f \colon X \to \U$ with $\U$ a Banach space, inner products are not available in general. We therefore replace the inner products with a duality pairing.

To that end, we introduce a dual pair $(\U, \U^\di)$ with pairing $\braket{\cdot}{\cdot}_\U$. Then, similar to the RKHS setting, we aim for a representation of the form
\begin{equation}
\braket{u^\di}{f(x)}_\U = \braket{K_{\U^\di}(x, \cdot)\, u^\di}{f}_\B
\end{equation}
for some kernel function $K_{\U^\di}(x, \cdot)\, u^\di \in \B^\di$ with $K_{\U^\di}(x, w) \in \L(\U^\di)$ and $w \in \Omega$. On the adjoint side, we require
\begin{equation}
\braket{g(w)}{u}_\U = \braket{g}{K_\U(\cdot, w)\, u}_\B
\end{equation}
for some $K_\U(\cdot, w)\, u \in \B$  with $K_{\U}(x, w) \in \L(\U)$ and $x \in X$. In the vv-RKHS case, one has the identity
\begin{equation}
\braket{f(x)}{u}_\U = \braket{u}{f(x)}_\U  = \braket{K(x, \cdot)u}{f}_\H = \braket{f}{K(x, \cdot)u}_\H,
\end{equation}
so it is natural to take $K_\U = K_{\U^\di} = K$ in that setting. In general, however, $K_\U \neq K_{\U^\di}$ as they act on different spaces.

Moreover, in the vv-RKHS framework with a real-valued inner product, one has the identity
\begin{equation}
    \braket{K(y,x)u}{v}_\U = \braket{u}{K(x,y)v}_\U = \braket{K(x, \cdot)u}{K(\cdot, y)v}_\H \eqqcolon K_b(x,y)(u, v),
    \label{eq:kernel_expressions_vv-rkhs}
\end{equation}
where the first equality follows from the symmetry property $K(x,y)^* = K(y,x)$. This shows that combining the inner product on $\U$ with a linear operator in either the first or second argument yields the same scalar output, which in turn can also be obtained via a specific bilinear form $K_b(x,y) \colon \U \times \U \to \R$. We aim to mimic this behavior when replacing the inner product with a duality pairing.

To formalize this structure and the reproducing property, we define the kernel as
\begin{equation}
K \colon X \times \Omega \to \twin(\mathcal{U}, \mathcal{U}^\di).
\end{equation}
Here, $(\mathcal{U}, \mathcal{U}^\di)$ is a dual pair, and $\twin(\mathcal{U}, \mathcal{U}^\di)$ is the space of twin operators which we introduce below and is based on the definition introduced in Diekmann et al.\ \cite{diekmann2021twin} for the more restrictive case in which $(\mathcal{U}, \mathcal{U}^\di)$ is assumed to be norming. The space $\twin(\mathcal{U}, \mathcal{U}^\di)$ can be viewed as a generalization of $\L(\U)$ (see Theorem \ref{thm:twin_isom_to_bounded_lin_ops_when_reflexive}), which is used in the vv-RKHS. 

\begin{definition}[Twin operators]
An operator $T:\U^\di\times\U\to\R$ over a dual pair $(\U,\U^\di)$ with pairing $\braket{\cdot}{\cdot}_\U$ is a twin operator when it is a bounded bilinear map that defines both a linear operator $T_\U \colon \U \to \U$ and linear operator $T_{\U^\di} \colon \U^\di \to \U^\di$ via:     
\begin{itemize}
    \item $T_{\U}$: $\braket{u^\di}{T_{\U} u}_\U = T(u^\di, u) \quad \forall u^\di \in \U^\di \quad \forall u \in \U$, and
    \item $T_{\U^\di}$: $\braket{T_{\U^\di}u^\di}{u}_\U = T(u^\di, u) \quad \forall u^\di \in \U^\di \quad \forall u \in \U$.
\end{itemize}
The space of all twin operators $T:\U^\di\times\U\to\R$ is denoted as $\twin(\U,\U^\di)$ and endowed with norm
\begin{equation}
    \norm{T}_{\twin(\U,\U^\di)} = \sup_{\norm{u}_\U\leq 1, \norm{u^\di}_{\U^\di}\leq 1}\abs{T(u^\di,u)}\qedhere
\end{equation}
\label{def:twin_operators}
\end{definition}
\begin{remark}
    In our setting, the twin operator $T$ takes the role of $K_b$ and the associated operators $T_\U$ and $T_{\U^\di}$ correspond to $K_\U$ and $K_{\U^\di}$, respectively. Moreover, $T_{\U^\di}$ can be interpreted as the adjoint of $T_\U$, and $T_\U u$ plays the role of a Riesz representation. Specifically, for fixed $u \in \U$, the map $u^\di \mapsto T(u^\di, u)$ defines a bounded linear functional on $\U^\di$ by the boundedness of $T$. When $\U^\di = \U$ and $\U$ is a Hilbert space, the Riesz representation theorem guarantees the existence of an element $T_\U u \in \U$ such that $\braket{u^\di}{T_\U u}_\U = T(u^\di, u)$. Since $T$ is linear in $u$, we have
    \begin{equation}
        \braket{u^\di}{T_\U(u + \lambda v)}_\U = T(u^\di, u + \lambda v) = T(u^\di, u) + \lambda T(u^\di, v) = \braket{u^\di}{T_\U u + \lambda T_\U v}_\U,
    \end{equation}
    which implies that $T_\U(u + \lambda v) = T_\U(u) + \lambda T_\U(v)$, i.e., $T_\U$ is linear. Thus, in the RKHS setting, we know that a linear operator $T_\U$ exists such that $\braket{u^\di}{T_\U u}_\U = T(u^\di, u)$ holds for all $u^\di \in \U^\di = \U$ and $u \in \U$. However, in the Banach space setting, no such representation theorem holds in general, so we must assume the identity $\braket{u^\di}{T_\U u}_\U = T(u^\di, u)$ as part of the structure. Similar considerations apply for the operator $T_{\U^\di}$.
\end{remark}

\begin{remark}
When the pairing is continuous, an important twin operator is the pairing $\braket{\cdot}{\cdot}_\U \colon \U^\di \times \U\to \R$ itself, with both induced linear operators being the identity on $\U$ and $\U^\di$.
\end{remark}

The norm of a twin operator $T\in \twin(\U,\U^\di)$ is related to the associated bounded linear operators $T_\U$ and $T_{\U^\di}$, but the norm of $T$ does not necessarily coincide with $\norm{T_\U}_{\L(\U)}$ and $\norm{T_{\U^\di}}_{\L(\U^\di)}$. In the special case where $(\U,\U^\di)$ is norming, $\norm{T}_{\twin(\U,\U^\di)} = \norm{T_\U}_{\L(\U)} = \norm{T_{\U^\di}}_{\L(\U^\di)}$, which will be crucial in proving Theorem \ref{thm:twin_isom_to_bounded_lin_ops_when_reflexive}. 
\begin{theorem}[Twin operator norm characterization for norming dual pairs]\label{thm:norming}
A dual pair $(\U,\U^\di)$ is called norming whenever
\begin{subequations}
\begin{align}
    \norm{u}_\U &= \sup_{\norm{u^\di}_{\U^\di} \leq 1}\abs{\braket{u^\di}{u}_\U} \\
    \norm{u^\di}_{\U^\di} &= \sup_{\norm{u}_{\U} \leq 1}\abs{\braket{u^\di}{u}_\U}
\end{align}    
\end{subequations}
Let $(\U,\U^\di)$ be a norming dual pair. If $T\in \twin(\U,\U^\di)$, then
\begin{equation}
    \norm{T}_{\twin(\U,\U^\di)} = \norm{T_\U}_{\L(\U)} = \norm{T_{\U^\di}}_{\L(\U^\di)}
\end{equation}
where $T_\U: \U\to\U$ and $T_{\U^\di}:\U^\di\to\U^\di$ are the bounded linear maps defined by $T$ on $\U$ and $\U^\di$, respectively.
\end{theorem}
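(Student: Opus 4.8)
The plan is to show that all three quantities coincide with the single joint supremum
$$\norm{T}_{\twin(\U,\U^\di)}=\sup_{\norm{u}_\U\leq 1,\,\norm{u^\di}_{\U^\di}\leq 1}\abs{T(u^\di,u)}$$
that defines the twin-operator norm, by using the norming hypothesis to rewrite each operator norm as a supremum of pairings, and then invoking the defining identities of $T_\U$ and $T_{\U^\di}$ to convert those pairings back into values of $T$.

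First I would handle $T_\U$. Fixing $u\in\U$ and using that $(\U,\U^\di)$ is norming, the norm of the vector $T_\U u\in\U$ is computed by testing against the unit ball of $\U^\di$, and the defining identity $\braket{u^\di}{T_\U u}_\U=T(u^\di,u)$ then gives
$$\norm{T_\U u}_\U=\sup_{\norm{u^\di}_{\U^\di}\leq 1}\abs{\braket{u^\di}{T_\U u}_\U}=\sup_{\norm{u^\di}_{\U^\di}\leq 1}\abs{T(u^\di,u)}.$$
Taking the supremum over $\norm{u}_\U\leq 1$ and merging the two suprema into one over the product of unit balls yields $\norm{T_\U}_{\L(\U)}=\norm{T}_{\twin(\U,\U^\di)}$. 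The same computation bounds $\norm{T_\U u}_\U\leq\norm{T}_{\twin(\U,\U^\di)}\norm{u}_\U$, so this step simultaneously shows that $T_\U$ is bounded with finite norm.

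The argument for $T_{\U^\di}$ is the mirror image, now using the second norming identity together with $\braket{T_{\U^\di}u^\di}{u}_\U=T(u^\di,u)$. Fixing $u^\di\in\U^\di$ one obtains
$$\norm{T_{\U^\di}u^\di}_{\U^\di}=\sup_{\norm{u}_{\U}\leq 1}\abs{\braket{T_{\U^\di}u^\di}{u}_\U}=\sup_{\norm{u}_{\U}\leq 1}\abs{T(u^\di,u)},$$
and taking the supremum over $\norm{u^\di}_{\U^\di}\leq 1$ again recovers the joint supremum, so $\norm{T_{\U^\di}}_{\L(\U^\di)}=\norm{T}_{\twin(\U,\U^\di)}$. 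Chaining the two equalities proves the claim.

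I do not expect a genuine obstacle here: the result is essentially a bookkeeping exercise in exchanging operator norms for the bilinear-form norm. The only point requiring care is that the norming hypothesis enters directionally — the identity for $\norm{\cdot}_\U$ is what lets me evaluate $\norm{T_\U u}_\U$, while the identity for $\norm{\cdot}_{\U^\di}$ governs $T_{\U^\di}$ — so a merely one-sided norming pair would deliver only one of the two equalities. It is also worth emphasizing that the argument never invokes reflexivity, so it remains valid in the non-reflexive situations that motivate the twin-operator formalism in the first place.
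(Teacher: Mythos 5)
Your proof is correct and follows essentially the same argument as the paper: rewriting each operator norm via the norming hypothesis as a supremum of pairings, identifying these with values of $T$ through the defining identities of $T_\U$ and $T_{\U^\di}$, and merging into the joint supremum defining $\norm{T}_{\twin(\U,\U^\di)}$. The paper writes out only the $T_\U$ case and notes the other is symmetric, exactly as you observe.
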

\begin{proof}
    We only show $\norm{T}_{\twin(\U,\U^\di)} = \norm{T_\U}_{\L(\U)}$ as $\norm{T}_{\twin(\U,\U^\di)} = \norm{T_{\U^\di}}_{\L(\U^\di)}$ is shown similarly.
    \begin{align}
        \norm{T}_{\twin(\U,\U^\di)} & = \sup_{\norm{u}_\U\leq 1, \norm{u^\di}_{\U^\di}\leq 1}\abs{T(u^\di,u)} = \sup_{\norm{u}_\U\leq 1, \norm{u^\di}_{\U^\di}\leq 1}\abs{\braket{u^\di}{T_\U u}_\U} \\
        & = \sup_{\norm{u}_\U\leq 1} \left(\sup_{\norm{u^\di}_{\U^\di}\leq 1}\abs{\braket{u^\di}{T_\U u}_\U}\right) = \sup_{\norm{u}_\U\leq 1} \norm{T_\U u}_\U = \norm{T_\U}_{\L(\U)}.\qedhere
    \end{align}
\end{proof}
Using the Twin operators in Definition \ref{def:twin_operators}, we can define the adjoint pair of vv-RKBSs. 
\begin{definition}[Adjoint pair of vv-RKBS]
    Given a dual pair of Banach spaces $(\U, \U^\di)$, let $\B$ be a vv-RKBS of functions mapping from a set $X$ to $\U$ and let $\B^\di$ be a vv-RKBS of functions mapping from a set $\Omega$ to $\U^\di$. Moreover, let $\braket{\cdot}{\cdot}_\B \colon \B^\di \times \B \to \R$ be a continuous dual pairing and $K \colon X \times \Omega \to \twin(\U, \U^\di)$. We call $K$ a reproducing kernel for $\B$ when $K_{\U^\di}(x, \cdot) u^\di \in \B^\di$ for all $x, u^\di \in X \times \U^\di$ and
    \begin{equation}
        \braket{u^\di}{f(x)}_\U = \braket{K_{\U^\di}(x,\cdot)u^\di}{f}_\B 
    \end{equation}
    for all $x, u^\di \in X \times \U^\di$ and $f \in \B$. Moreover, if $K_{\U}(\cdot, w) u \in \B$ for all $(w, u) \in \Omega \times \U$ and 
    \begin{equation}
        \braket{g(w)}{u}_\U = \braket{g}{K_\U(\cdot,w)u}_\B
    \end{equation}
    for all $(w, u) \in \Omega \times \U$ and $g\in\B^\di$, then we call $\B^\di$ an adjoint vv-RKBS of $\B$ and $(\B, \B^\di)$ an adjoint pair of vv-RKBSs.
    \label{def:adjoint_pair_vector_RKBS}
\end{definition}

\begin{remark}
    We do not assume $(\U, \U^\di)$ to be a continuous dual pairing. The reason is that for the reproducing property of $\B$ and the continuity of the pairing $\braket{\cdot}{\cdot}_\B$, it suffices that $\braket{\cdot}{\cdot}_\U$ is continuous on the values attained by functions in $\B$. An analogous argument applies to the reproducing property of $\B^\di$. Hence, full continuity of the dual pairing $(\U, \U^\di)$ is not required.
\end{remark}

\begin{figure}[t]
    \centering
    \begin{subfigure}[b]{0.48\textwidth}
        \centering
            \begin{tikzpicture}
        
                \node (RKHSformula) {$K \colon X\times X\to \mathcal{L(\U)} \cong \twin(\U, \U^*)$};
                \node (H1) at ( {$(RKHSformula.west)!.25!(RKHSformula.east)$} |- {$(RKHSformula.north) + (0.0,1.5)$}) {$\H(X)$};    
                \node (H2) at ( {$(RKHSformula.west)!.75!(RKHSformula.east)$} |- {$(RKHSformula.north) + (0.0,1.5)$}) {$\H(X)$};
                \node at ($(RKHSformula.north) + (0.0,2.3)$) {\underline{\large\textbf{vv-RKHS}}};
                        
                \draw [->, line width=0.1mm, out=200, in=90] (H1.south) to ({$(RKHSformula.west)!.15!(RKHSformula.east)$} |- {$(RKHSformula.north) + (0.0,0.1)$});
                \draw [->, line width=0.1mm, out=200, in=90] (H2.south) to ({$(RKHSformula.west)!.27!(RKHSformula.east)$} |- {$(RKHSformula.north) + (0.0,0.05)$});
        
                \draw [->, red, line width=0.1mm, out=-20, in=90] (H1.south) to ({$(RKHSformula.west)!.84!(RKHSformula.east)$} |- {$(RKHSformula.north) + (0.0,0.05)$});
                \draw [->, red, line width=0.1mm, out=-20, in=90] (H2.south) to ({$(RKHSformula.west)!.91!(RKHSformula.east)$} |- {$(RKHSformula.north) + (0.0,0.1)$});
            
            \end{tikzpicture}

        \caption{Vector-valued RKHS}
        \label{fig:vv-RKHS}
    \end{subfigure}
    \hfill
    \begin{subfigure}[b]{0.48\textwidth}
        \centering
    \begin{tikzpicture}

        \node (RKBSformula) {${K\colon X\times \Omega \to \twin(\U, \U^\di)}$};
        \node (H1) at ( {$(RKBSformula.west)!.25!(RKBSformula.east)$} |- {$(RKHSformula.north) + (0.0,1.5)$}) {$\mathcal{B}(X)$};    
        \node (H2) at ( {$(RKBSformula.west)!.75!(RKBSformula.east)$} |- {$(RKBSformula.north) + (0.0,1.5)$}) {$\mathcal{B}^\diamond(\Omega)$};
        \node at ($(RKBSformula.north) + (0.0,2.3)$) {\underline{\large\textbf{vv-RKBS}}};
                
        \draw [->, line width=0.1mm, out=200, in=90] (H1.south) to ({$(RKBSformula.west)!.2!(RKBSformula.east)$} |- {$(RKBSformula.north) + (0.0,0.1)$});
        \draw [->, line width=0.1mm, out=200, in=90] (H2.south) to ({$(RKBSformula.west)!.37!(RKBSformula.east)$} |- {$(RKBSformula.north) + (0.0,0.05)$});
        
        \draw [->, red, line width=0.1mm, out=-20, in=90] (H1.south) to ({$(RKHSformula.west)!.72!(RKHSformula.east)$} |- {$(RKHSformula.north) + (0.0,0.05)$});
        \draw [->, red, line width=0.1mm, out=-20, in=90] (H2.south) to ({$(RKHSformula.west)!.8!(RKHSformula.east)$} |- {$(RKHSformula.north) + (0.0,0.1)$});
        
    \end{tikzpicture}

        \caption{Adjoint pair of vector-valued RKBS}
    \end{subfigure}
    \caption{\textbf{\textit{From vv-RKHS to adjoint pair of vv-RKBS}}. In a vv-RKHS, functions $f \colon X \to \U$ take values in a Hilbert space $\U$ and admit a reproducing kernel $K \colon X \times X \to \L(\U)$. In the vv-RKBS setting, we instead use Banach spaces $\B$ and $\B^\di$ for functions mapping to a dual pair $(\U, \U^\di)$, and replace inner products with duality pairings. This breaks the symmetry in the domain, replacing $X \times X$ with $X \times \Omega$, and requires twin operators in place of $\L(\U)$ to accommodate the asymmetry.}
    
    \label{fig:vv-RKBS}
\end{figure}

Figure~\ref{fig:vv-RKBS} summarizes the transition from the definition of a vv-RKHS to that of an adjoint pair of vv-RKBS. First, observe that the domain changes from $X \times X$ to $X \times \Omega$, reflecting the fact that Banach spaces are not necessarily isomorphic to their duals.

Moreover, instead of using the space of bounded linear operators $\L(\U)$, we work with twin operators. Twin operators generalize the relationship between $\L(\U)$ and the dual pair $(\U,\U^*)$ to arbitrary dual pairs $(\U,\U^\di)$. In the classical setting, this relationship is realized through adjoint operators in $\L(\U^*)$. The definition of $\twin(\U,\U^\di)$ extends this idea by requiring the existence of an operator $T_{\U^\di} \in \L(\U^\di)$ that acts as an adjoint of $T_{\U} \in \L(\U)$. The following theorem makes the aforementioned relationship precise and shows that twin operators indeed generalize bounded linear operators.
\begin{theorem}[Twin operators generalize $\L(\U)$]
Let $\U$ be any Banach space. Then $\twin(\U,\U^\ast)\cong\L(\U)$.
\label{thm:twin_isom_to_bounded_lin_ops_when_reflexive}
\end{theorem}
\begin{proof}
First, note that any $T_\U \in \L(\U)$ has a bounded adjoint $T_\U^*: \U^* \to \U^*$ defined by $[T_\U^*(u^*)](u) = \braket{u^*}{T_\U(u)}_\U$ (\cite[Definition 2.28]{fabian2011banach}). In addition, $T_\U$ induces a bounded bilinear form via $(u^*, u) \mapsto \braket{u^*}{T_\U u}_\U$, which satisfies the definition of twin operators since $\braket{T_\U^* u^*}{u}_\U = [T_\U^*(u^*)](u) = \braket{u^*}{T_\U u}_\U$ by the definition of the adjoint and of the canonical duality pairing $\braket{\cdot}{\cdot}_\U: \U^* \times \U \to \R$. Consequently, the linear map $T_\U \to \braket{\cdot}{T_\U \cdot}_\U$ maps  $\L(\U)$ into $\twin(\U,\U^\ast)$. 

To establish that $\L(\U)$ and $\twin(\U,\U^\ast)$ are isometrically isomorphic, we show that $T_\U \to \braket{\cdot}{T_\U \cdot}_\U$ is a bijection and an isometry. Injectivity follows by the nondegeneracy of the dual pairing. For surjectivity, note that in the defining relations for $T \in \twin(\U,\U^\ast)$, namely
\[T(u^*, u) = \braket{u^*}{T_{\U} u}_\U = \braket{T_{\U^*}u^*}{u}_\U \quad \forall u^* \in \U^* \quad \forall u \in \U,\]
the second equality is exactly the defining property of the adjoint, that is, we must have $T_{\U^*} = T_\U^*$. This means that the injective map $T_\U \to \braket{\cdot}{T_\U \cdot}_\U$ is surjective onto $\twin(\U,\U^\ast)$ and therefore a bijection. Finally, using Theorem \ref{thm:norming}, this bijection is also an isometric isomorphism, since the dual pair $(\U,\U^\ast)$ is always norming for any Banach space $\U$, as can be seen by the definition of the norm in $\U^\ast$ and an application of the Hahn-Banach theorem.
\end{proof}



The kernel construction in Definition \ref{def:adjoint_pair_vector_RKBS} differs from existing works on kernels for vv-RKBSs. Most of these works define a kernel on a symmetric domain $X \times X$. However, within this setting, the output space and the used assumptions vary considerably.

In Zhang et al.~\cite{zhang2013vector}, the vv-RKBS $\B$ and the output space $\U$ are assumed to be uniformly convex and uniformly Fr\'echet differentiable, which in particular implies reflexivity. Under these assumptions, they introduce a compatible semi-inner product structure to construct a reproducing kernel $K \colon X \times X \to B(\U)$. Here $B(\U)$ denotes the set of continuous, not necessarily linear, operators from $\U$ to itself. Using this structure, they establish a general representer theorem for the adjoint element of the solution.

Lin et al.~\cite{lin2021multi} and Chen et al.~\cite{chen2019vector} relax the uniformity assumptions and work in non-reflexive $\ell^1$ and group-lasso settings. However, they focus on $\R^d$-valued functions with kernels $K \colon X \times X \to \R^{d \times d}$. Similar to our approach, the reproducing property is achieved via two function spaces $(\B, \B^\di)$ linked by a bilinear form. Their representer theorem for the actual solution, however, requires a strong Lebesgue constant condition on the kernel as well as additional kernel admissibility conditions.

Combettes et al.~\cite{combettes2018regularized} construct vv-RKBSs using a feature map approach and assume that both the output and feature spaces are separable real Banach spaces. Under reflexivity, strict convexity, and smoothness of the feature space, they show the existence of a reproducing kernel
\begin{equation}
    K \colon X \times X \to B(\U^*, \U),
\end{equation}
where $B(\U^*,\U)$ denotes operators (not necessarily linear) mapping bounded subsets of $\U^*$ to bounded subsets of $\U$, and where the reproducing property additionally requires a duality mapping. They also prove a representer theorem that does not depend on the kernel structure itself, but still requires both the output and feature spaces to be separable and reflexive.

In contrast to the previously mentioned works, Wang et al.~\cite{wang2024hypothesis} go beyond the symmetric setting and, motivated by neural network architectures, define kernels $K \colon X \times \Omega \to \R^d$. Their construction, however, is specialized to finite-dimensional outputs, and the kernel's output does not directly correspond to a $d \times d$ matrix as used in the vv-RKHS setting.

Compared to these existing approaches, our adjoint-pair definition treats both symmetric and asymmetric kernel domains, supports general Banach-valued outputs, and produces kernel outputs that generalize the bounded linear operators used in the vv-RKHS setting. Moreover, it avoids structural assumptions such as reflexivity and separability for the output and feature spaces. This will be crucial in Sections \ref{sec:integral_and_neural_vv-RKBS} and \ref{sec:vv-NNs_HypNets_DeepONets}. There, we must work with the non-separable and non-reflexive space of vector measures as the feature space, as well as the non-separable and non-reflexive output spaces required for DeepONets and hypernetworks.

\subsection{vv-RKBS Properties} \label{sec:adjoint_vv-RKBS_properties}
As mentioned in the previous section, Definitions~\ref{def:adjoint_pair_scalar_RKBS} and~\ref{def:adjoint_pair_vector_RKBS} do not fully align with the definitions based on the $\delta$-dual \cite{xu2023sparse, wang2024sparse} or those involving isometric embeddings into the dual space \cite{zhang2009reproducing, spek2025duality}. The following theorem shows that these definitions become equivalent to Definitions~\ref{def:adjoint_pair_scalar_RKBS} and~\ref{def:adjoint_pair_vector_RKBS} when additional conditions are imposed on the latter two. For clarity, we restrict the statement to the vector-valued case in Definition \ref{def:adjoint_pair_vector_RKBS}.

\begin{theorem}
    Let $X,\Omega$ be sets, $(\U, \U^\di)$ be a dual pair and let $\B$ be a vv-RKBS containing functions mapping $X$ to $\U$. Assume that $\B^\di$ is a Banach space of functions mapping the set $\Omega$ to $\U^\di$. Then the following statements are equivalent:
    \begin{enumerate}
        \item[1.] $\B^\di$ is isometrically embedded in $\B^*$.
        \item[2.] There exists a duality pairing $\braket{\cdot}{\cdot}_\B \colon \B^\di \times \B \to \R$ and the norm $\norm{\cdot}_{\B^\di}$ satisfies $\norm{g}_{\B^\di} = \sup_{\norm{f}_\B \leq 1} | \braket{g}{f}_\B |$.
    \end{enumerate}
    Moreover, let $\delta_{x, u^\di}(f) \coloneqq \braket{u^\di}{f(x)}_\U$ for $f \in \B$, let $\Delta \coloneqq \spn\set{\delta_{x, u^\di} \given (x, u^\di) \in X \times \U^\di}$, and let the $\delta$-dual be defined as the closure $\overline{\Delta}$ of $\Delta$ under the $\B^*$ norm. Then the following are also equivalent:
    \begin{enumerate}
        \item[3.] $\Delta \subseteq \B^*$ and the $\delta$-dual $\overline{\Delta}\subseteq \B^*$ is isometrically isomorphic to $\B^\di$.
        \item[4.] 
        \begin{itemize}
            \item There exists a duality pairing $\braket{\cdot}{\cdot}_\B \colon \B^\di \times \B \to \R$, 
            \item the norm $\norm{\cdot}_{\B^\di}$ satisfies $\norm{g}_{\B^\di} = \sup_{\norm{f}_\B \leq 1} | \braket{g}{f}_\B |$,
            \item there exists a map $K_{\U^\di}$ that assigns to each $(x,w) \in X \times \Omega$ a linear operator $K_{\U^\di}(x,w) \colon \U^\di \to \U^\di$ such that for all $u \in \U^\di$ and $f\in \B$ the function $K_{\U^\di}(x,\cdot)u^\di \in \B^\di$ and $\braket{u^\di}{f(x)}_\U = \braket{K_{\U^\di}(x,\cdot)u^\di}{f}_\B$, and
            \item the set $\spn\{K_{\U^\di}(x, \cdot) u^\di \mid (x, u^\di) \in X \times \U^\di\}$ is dense in $\B^\di$.
        \end{itemize}
    \end{enumerate}
    \label{thm:equivalence_adjoint_banach_space_defs}
\end{theorem}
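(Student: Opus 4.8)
The statement is really two independent equivalences, $1\Leftrightarrow 2$ and $3\Leftrightarrow 4$, and my plan is to treat each by exhibiting an explicit bijective correspondence between an isometric map into $\B^*$ and a bilinear pairing that realizes the $\B^\di$-norm. For $2\Rightarrow 1$ I would set $\iota\colon\B^\di\to\B^*$, $\iota(g)=\braket{g}{\cdot}_\B$; the norm identity $\norm{g}_{\B^\di}=\sup_{\norm{f}_\B\leq 1}\abs{\braket{g}{f}_\B}$ says exactly that $\norm{\iota(g)}_{\B^*}=\norm{g}_{\B^\di}$, so $\iota$ is a linear isometry and hence an isometric embedding. For $1\Rightarrow 2$ I would reverse this: given an isometric embedding $\iota$, define $\braket{g}{f}_\B\coloneqq(\iota g)(f)$, read off the norm identity the same way, and note that non-degeneracy in $g$ (condition \eqref{nondegen_cond_pairing_2}) is precisely injectivity of $\iota$, which an isometry supplies. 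The one delicate point here is non-degeneracy in $f$ (condition \eqref{nondegen_cond_pairing_1}): it is equivalent to $\iota(\B^\di)$ separating the points of $\B$, which is \emph{not} automatic from an abstract isometric embedding, and I would flag it as the property distinguishing a genuine duality pairing from a merely norm-compatible one. It will be rescued for free in the kernel setting of $3$ and $4$.

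Before $3\Leftrightarrow 4$ I would record two preliminaries about the functionals $\delta_{x,u^\di}(f)=\braket{u^\di}{f(x)}_\U$. First, since $\B$ is a vv-RKBS, $\abs{\delta_{x,u^\di}(f)}\leq\norm{u^\di}_{\U^\di}C_x\norm{f}_\B$ using boundedness of the $\U$-pairing on the values attained by functions in $\B$, so $\Delta\subseteq\B^*$. Second, if $\delta_{x,u^\di}(f)=0$ for all $(x,u^\di)$ then $\braket{u^\di}{f(x)}_\U=0$ for all $u^\di\in\U^\di$, and non-degeneracy of the dual pair $(\U,\U^\di)$ forces $f(x)=0$ for every $x$, i.e.\ $f=0$; hence $\Delta$, and a fortiori $\overline{\Delta}$, separates points of $\B$. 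For $4\Rightarrow 3$ I would reuse $2\Rightarrow 1$ to obtain the isometric embedding $\iota(g)=\braket{g}{\cdot}_\B$, rewrite each generator via the reproducing identity as $\delta_{x,u^\di}=\iota\big(K_{\U^\di}(x,\cdot)u^\di\big)$, so that $\Delta=\iota\big(\spn\{K_{\U^\di}(x,\cdot)u^\di\}\big)$. Because $\iota$ is isometric, its range $\iota(\B^\di)$ is complete, hence closed in $\B^*$, and $\iota$ carries the dense span to a dense subset of $\iota(\B^\di)$; taking closures in $\B^*$ then yields $\overline{\Delta}=\iota(\B^\di)$, so $\iota$ restricts to the isometric isomorphism $\B^\di\cong\overline{\Delta}$ required by statement $3$.

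For $3\Rightarrow 4$ I would let $J\colon\B^\di\to\overline{\Delta}\subseteq\B^*$ be the given isometric isomorphism and set $\braket{g}{f}_\B\coloneqq(Jg)(f)$. The norm identity follows from isometry of $J$, non-degeneracy in $g$ from injectivity of $J$, and non-degeneracy in $f$ now comes for free from the preliminary above, since $J(\B^\di)=\overline{\Delta}\supseteq\Delta$ separates points. For the kernel I would use that $u^\di\mapsto\delta_{x,u^\di}$ is linear and that $\delta_{x,u^\di}\in\Delta\subseteq\overline{\Delta}$, and define $K_{\U^\di}(x,\cdot)u^\di\coloneqq J^{-1}(\delta_{x,u^\di})\in\B^\di$; then $\braket{K_{\U^\di}(x,\cdot)u^\di}{f}_\B=\delta_{x,u^\di}(f)=\braket{u^\di}{f(x)}_\U$ is the reproducing property, linearity of $K_{\U^\di}(x,w)$ in $u^\di$ follows because $J^{-1}$ is linear and evaluation at $w$ is linear on the function space $\B^\di$, and the density of $\spn\{K_{\U^\di}(x,\cdot)u^\di\}=J^{-1}(\Delta)$ in $\B^\di$ follows from density of $\Delta$ in $\overline{\Delta}$ under the isometry $J^{-1}$.

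The routine parts are the norm and injectivity bookkeeping; I expect the main points requiring care to be (i) the closure identity $\overline{\Delta}=\iota(\B^\di)$, where completeness of $\B^\di$ transported by the isometry is exactly what makes the range closed in $\B^*$ and lets density in $\B^\di$ transfer to density of $\Delta$ in $\overline{\Delta}$; and (ii) checking that $K_{\U^\di}(x,w)$ is a genuine linear operator $\U^\di\to\U^\di$, which hinges on reading $K_{\U^\di}(x,\cdot)u^\di$ as the element $J^{-1}(\delta_{x,u^\di})$ of the function space $\B^\di$ and composing with pointwise evaluation. The subtlety I would track throughout is non-degeneracy \eqref{nondegen_cond_pairing_1}, which fails for a generic isometric embedding but holds in statements $3$ and $4$ precisely because the $\delta$-functionals separate the points of $\B$.
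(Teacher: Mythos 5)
Your proposal is correct and takes essentially the same route as the paper's proof: the same embedding $\iota(g) = \braket{g}{\cdot}_\B$ for $(2)\Rightarrow(1)$ and $(4)\Rightarrow(3)$, the same kernel construction $K_{\U^\di}(x,\cdot)u^\di \coloneqq \iota^{-1}(\delta_{x,u^\di})$ with linearity of $K_{\U^\di}(x,w)$ read off from linearity of $u^\di \mapsto \delta_{x,u^\di}$ composed with pointwise evaluation, and the same transfer of density through the isometry. Your two deviations are matters of rigor rather than route: where the paper defines $\iota$ on $\spn\{K_{\U^\di}(x,\cdot)u^\di \mid (x,u^\di) \in X \times \U^\di\}$ and extends by continuity, you instead use that the range of the global isometry is complete, hence closed, so that $\overline{\Delta} = \iota(\B^\di)$ (equivalent, and slightly cleaner since it avoids a well-definedness check on the span); and your flag that non-degeneracy \eqref{nondegen_cond_pairing_1} is not automatic in $(1)\Rightarrow(2)$ identifies a genuine subtlety that the paper's proof passes over silently, while, as you correctly observe, it is restored in $(3)\Leftrightarrow(4)$ because the $\delta$-functionals separate the points of $\B$.
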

\begin{proof}
    \mbox{}\\
    \textbf{(1) $\iff$ (2):}
    
    $\B^\di$ is isometrically embedded in $\B^*$ if and only if there exists a mapping $\iota \colon \B^\di \to \B^*$ such that $\norm{g}_{\B^\di} = \norm{\iota(g)}_{\B^*} = \sup_{\norm{f}_\B \leq 1} | \braket{\iota(g)}{f}_{\B^*,\B} |$ with $\braket{\cdot}{\cdot}_{\B^*,\B}$ the canonical duality pairing on $\B$. 

    For $(1)\Rightarrow(2)$, define the pairing between $\B$ and $\B^\di$ as $\braket{g}{f}_\B \coloneqq \braket{\iota(g)}{f}_{\B^*,\B}$. 
    
    For $(2)\Rightarrow(1)$, let $\iota \colon \B^\di \to \B^*$ be defined via $g \mapsto \braket{g}{\cdot}_\B \in \B^*$. Then 
    \begin{equation}
        \norm{\iota(g)}_{\B^*} = \sup_{\norm{f}_\B \leq 1} | \braket{\iota(g)}{f}_{\B^*,\B}| = \sup_{\norm{f}_\B \leq 1} | \braket{g}{f}_\B| = \norm{g}_{\B^\di}, 
    \end{equation}
    which shows that $\B^\di$ is isometrically embedded in $\B^*$.

    \textbf{(3) $\implies$ (4):}
    
    Let $\iota \colon \B^\di \to \overline{\Delta}$ be the isometric isomorphism between the $\delta$-dual and $\B^\di$. Define the pairing $\braket{g}{f}_\B \coloneqq \braket{\iota(g)}{f}_{\B^*,\B}$. Then, as in the proof of (1)~$\Rightarrow$~(2), we get $\norm{g}_{\B^\di} = \sup_{\norm{f}_\B \leq 1} | \braket{g}{f}_\B |$.

    Note that $\iota^{-1}(\delta_{x, u^\di}) \in \B^\di$ for all $(x, u^\di) \in X \times \U^\di$. Since $\delta_{x, u^\di + \lambda v^\di} = \delta_{x, u^\di} + \lambda \delta_{x, v^\di}$, it follows that $u^\di \mapsto \iota^{-1}(\delta_{x, u^\di}) \in \B^\di$ is linear. In particular, as $\B^\di$ contains functions over $\Omega$, we have $u^\di \mapsto \iota^{-1}(\delta_{x, u^\di})(w) \in \U^\di$ is linear for all $w \in \Omega$. Defining the linear operator $K_{\U^\di}(x, w) \coloneqq \iota^{-1}(\delta_{x, \cdot})(w)$ and using the definition of the pairing, we get:
    \begin{equation}
        \braket{K_{\U^\di}(x, \cdot) u^\di}{f}_\B = \braket{\iota^{-1}(\delta_{x, u^\di})}{f}_\B = \braket{\delta_{x, u^\di}}{f}_{\B^*,\B} = \braket{u^\di}{f(x)}_\U.
    \end{equation}
    This proves the reproducing property.
    
    It remains to show that the set $\{K_{\U^\di}(x, \cdot) u^\di \mid (x, u^\di) \in X \times \U^\di\}$ is dense in $\B^\di$. Since $\iota(K_{\U^\di}(x, \cdot) u^\di) = \delta_{x, u^\di}$ and $\iota$ is an isometry, taking closures with respect to the norms on both sides shows that the closure of $\spn\set{K_{\U^\di}(x, \cdot) u^\di \given (x, u^\di) \in X \times \U^\di}$ is isometrically isomorphic to $\overline{\Delta}$, which is isometrically isomorphic to $\B^\di$. This completes the proof.

    \textbf{(4) $\implies$ (3):}
    
     Define the mapping $\iota$ by $\iota(K_{\U^\di}(x, \cdot) u^\di) \coloneqq \delta_{x, u^\di}$. Note that
    \begin{equation}
    \begin{split}
    \norm{\sum_{n=1}^N a_n K_{\U^\di}(x_n, \cdot) u^\di_n}_{\B^\di} &= \sup_{\norm{f}_\B \leq 1} \left| \braket{\sum_{n=1}^N a_n K_{\U^\di}(x_n, \cdot) u^\di_n}{f}_\B\right| = \sup_{\norm{f}_\B \leq 1} \left| \sum_{n=1}^N a_n \braket{K_{\U^\di}(x_n, \cdot) u^\di_n}{f}_\B\right| 
    \\&= \sup_{\norm{f}_\B \leq 1} \left| \sum_{n=1}^N a_n \delta_{x_n, u^\di_n}(f)\right| = \norm{\sum_{n=1}^N a_n \delta_{x_n,u^\di_n}}_{\B^*},
    \end{split}
    \end{equation}
    for all $a_n\in\R$, $x_n\in X$ and $u^\di_n\in \U^\di$ for any $N\in \N$.
    This shows that $\iota \colon \spn\set{K_{\U^\di}(x, \cdot) u^\di \given (x, u^\di) \in X \times \U^\di} \to \Delta$ is an isometric isomorphism. By the continuous extension theorem, $\iota$ can be extended to an isometric isomorphism from $\overline{\spn(K_{\U^\di}(x, \cdot) u^\di \mid (x, u^\di) \in X \times \U^\di)}$ to $\overline{\Delta}$. Since $\{K_\U(x, \cdot) u^\di \mid (x, u^\di) \in X \times \U^\di\}$ is dense in $\B^\di$, it follows that $\B^\di$ is isometrically isomorphic to $\overline{\Delta}$.
\end{proof}

The previous theorem demonstrates that using only a duality pairing is the weakest assumption one can make. This raises the question whether properties of the vv-RKHS extend to the adjoint pair of vv-RKBS under this minimal formulation, or whether additional assumptions, such as those in Theorem~\ref{thm:equivalence_adjoint_banach_space_defs}, are required.

An example where additional assumptions are necessary relates to the fact that, in the vv-RKHS setting, the set $\{K(x,\cdot)u \mid (x,u) \in X \times \U\}$ is dense in the vv-RKHS. As the equivalence between (3) and (4) in Theorem~\ref{thm:equivalence_adjoint_banach_space_defs} already illustrates, further assumptions are needed in the vv-RKBS setting to obtain a similar density property. Moreover, the theorem only establishes density in $\B^\di$, not in $\B$. In Wang et al.\ \cite{wang2024sparse}, this issue is also addressed; they show that, in the scalar case, assuming $\B^\di$ is a pre-dual space leads to a corresponding density result in $\B$.

Another result that holds in the vv-RKHS setting and whose analogue we can ask for in the vv-RKBS setting is the existence of a kernel. In vv-RKHSs a reproducing kernel is always present, but in Definitions \ref{def:RKBS_bounded_point_eval} and \ref{def:RKBS_feature} no kernel arises directly, since the Riesz representation theorem is unavailable. It may therefore seem questionable to refer to these spaces as reproducing kernel spaces. Nevertheless, we show that every vv-RKBS as in these definitions is part of an adjoint pair of vv-RKBSs (Definition \ref{def:adjoint_pair_vector_RKBS}), which ensures the existence of a reproducing kernel. Moreover, the next theorem demonstrates that one can always choose an adjoint vv-RKBS satisfying the strongest assumptions of Theorem \ref{thm:equivalence_adjoint_banach_space_defs}, in particular those in point 4. Thus, while these stronger assumptions are not strictly necessary, they can always be met in a suitable adjoint pair.
\begin{theorem}[Every vv-RKBS corresponds to an adjoint vv-RKBS pair]
    Let $\B$ be a vv-RKBS with output Banach space $\U$. Define $\delta_{x, u^*}(f) \coloneqq \braket{u^*}{f(x)}_\U$ for $\braket{\cdot}{\cdot}_\U$ the canonical duality pairing of the pair $(\U,\U^\ast)$ with $\U^\ast$ the continuous dual. Denote $\Delta \coloneqq \spn\set{ \delta_{x, u^*} \given (x, u^*) \in X \times \U^*}$, and let the $\delta$-dual be defined as the closure $\overline{\Delta}$ of $\Delta$ under the $\B^*$ norm. Define the Banach space 
    \begin{equation}
    \B^\di \coloneqq \left\{g \colon \B \times \U^* \to \U^* \mid g(f, \widetilde{u}^*) = g_*(f) \widetilde{u}^* \text{ for some } g_* \in \overline{\Delta}\right\}
    \end{equation}
    with $\norm{g}_{\B^\di} \coloneqq \norm{g_*}_{\B^*}$. Then $(\B, \B^\di)$ forms an adjoint pair of vv-RKBSs with duality pairing $\braket{g}{f}_\B \coloneqq g_*(f)$ and kernel
    \begin{equation}
    \begin{split}
        K_{\U}(x,(f, \widetilde{u}^*))u \coloneqq f(x) \braket{\widetilde{u}^*}{u}_\U, & \qquad \qquad \qquad \qquad K_{\U^*}(x,(f, \widetilde{u}^*))u^* \coloneqq \braket{u^*}{f(x)}_\U \widetilde{u}^*, \\
        \\
        K(x,(f, \widetilde{u}^*))(u^*, u) &= \braket{u^*}{f(x)}_\U \braket{\widetilde{u}^*}{u}_\U = \delta_{x, u^*}(f) \braket{\widetilde{u}^*}{u}_\U.
    \end{split}        
    \end{equation}
    Moreover, by definition, 
    \begin{equation}
        \norm{g}_{\B^\di} = \norm{g_*}_{\B^*} = \sup_{\norm{f}_\B \leq 1} |g_*(f)| = \sup_{\norm{f}_\B \leq 1} |\braket{g}{f}_\B|,
    \end{equation}
    and the set
    \begin{equation}
    \spn\{K_{\U^*}(x, \cdot) \widetilde{u}^* \mid (x, \widetilde{u}^*) \in X \times \U^*\} = \spn\{g(f, \widetilde{u}^*) \coloneqq \delta_{x, u^*}(f) \widetilde{u}^* \mid (x, u^*) \in X \times \U^*, (f, \widetilde{u}^*) \in \B \times \U^*\}
    \end{equation}
    is dense in $\B^\di$.
    \label{thm:every_vv_RKBS_corresponds_to_adjoint_vv_RKBS_pair}
\end{theorem}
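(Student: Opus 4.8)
The plan is to reduce everything to one structural fact: the assignment $g \mapsto g_*$ identifies $\B^\di$ isometrically with the $\delta$-dual $\overline{\Delta}\subseteq\B^*$. First I would note that each $\delta_{x,u^*}$ is indeed in $\B^*$, since $|\delta_{x,u^*}(f)| \le \norm{u^*}_{\U^*}\norm{f(x)}_\U \le C_x\norm{u^*}_{\U^*}\norm{f}_\B$ by the bounded point evaluations of $\B$, so that $\Delta$ and its closure $\overline{\Delta}$ are well-defined subspaces of $\B^*$. Then I would check that $g\mapsto g_*$ is well defined and bijective onto $\overline{\Delta}$: fixing any $\widetilde{u}^*\neq 0$ in $g(f,\widetilde{u}^*)=g_*(f)\widetilde{u}^*$ recovers $g_*(f)$ uniquely for every $f$, while every $g_*\in\overline{\Delta}$ generates an admissible $g$. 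Linearity is immediate and the isometry is the definition $\norm{g}_{\B^\di}=\norm{g_*}_{\B^*}$; since $\overline{\Delta}$ is a closed subspace of the Banach space $\B^*$, this makes $\B^\di$ a Banach space. To see that $\B^\di$ is a vv-RKBS over $\Omega=\B\times\U^*$ with output $\U^*$, I would bound the point evaluation at $w=(f,\widetilde{u}^*)$ by $\norm{g(w)}_{\U^*}=|g_*(f)|\,\norm{\widetilde{u}^*}_{\U^*}\le \norm{f}_\B\norm{\widetilde{u}^*}_{\U^*}\norm{g}_{\B^\di}$, giving the constant $C_w=\norm{f}_\B\norm{\widetilde{u}^*}_{\U^*}$.

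Next I would verify that $\braket{g}{f}_\B\coloneqq g_*(f)$ is a continuous dual pairing. Bilinearity is inherited from the linearity of $g\mapsto g_*$ and of $g_*\in\B^*$, and boundedness from $|\braket{g}{f}_\B|=|g_*(f)|\le\norm{g}_{\B^\di}\norm{f}_\B$. For non-degeneracy, condition \eqref{nondegen_cond_pairing_2} is immediate since $\braket{g}{f}_\B=0$ for all $f$ forces $g_*=0$; condition \eqref{nondegen_cond_pairing_1} instead tests against $g_*=\delta_{x,u^*}\in\Delta$ to obtain $\braket{u^*}{f(x)}_\U=0$ for all $(x,u^*)$, and then uses that $\U^*$ separates the points of $\U$ to conclude $f=0$.

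The kernel verification is then direct computation. I would first confirm that $K(x,(f,\widetilde{u}^*))$ is a twin operator: it is bilinear and bounded with $\norm{K(x,(f,\widetilde{u}^*))}_{\twin(\U,\U^*)}\le\norm{f(x)}_\U\norm{\widetilde{u}^*}_{\U^*}$, and the stated $K_\U$ and $K_{\U^*}$ satisfy the two twin identities because the scalar factors $\braket{\widetilde{u}^*}{u}_\U$ and $\braket{u^*}{f(x)}_\U$ pull out of the pairing. For the reproducing property of $\B$, the function $w\mapsto K_{\U^*}(x,w)u^*$ equals $\delta_{x,u^*}(f)\widetilde{u}^*$, whose associated functional is $\delta_{x,u^*}\in\Delta\subseteq\overline{\Delta}$, so it lies in $\B^\di$ and $\braket{K_{\U^*}(x,\cdot)u^*}{f}_\B=\delta_{x,u^*}(f)=\braket{u^*}{f(x)}_\U$. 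For the reproducing property of $\B^\di$, the function $x\mapsto K_\U(x,w)u=\braket{\widetilde{u}^*}{u}_\U f(x)$ is a scalar multiple of $f\in\B$ and hence lies in $\B$, and $\braket{g}{K_\U(\cdot,w)u}_\B=\braket{\widetilde{u}^*}{u}_\U g_*(f)=\braket{g_*(f)\widetilde{u}^*}{u}_\U=\braket{g(w)}{u}_\U$.

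Finally, the norm identity is exactly the definition of the dual norm together with $\braket{g}{f}_\B=g_*(f)$, and the density claim follows from the isometric identification: the span $\spn\set{K_{\U^*}(x,\cdot)u^* \given (x,u^*)\in X\times\U^*}$ corresponds under $g\mapsto g_*$ precisely to $\Delta=\spn\set{\delta_{x,u^*}}$, which is dense in $\overline{\Delta}$ by construction, so its preimage is dense in $\B^\di$. I do not expect a serious obstacle, as the argument is entirely verification; the only points requiring care are the well-definedness of $g_*$ from the functional equation $g(f,\widetilde{u}^*)=g_*(f)\widetilde{u}^*$, and the observation that condition \eqref{nondegen_cond_pairing_1} is not automatic but relies on the canonical dual $\U^*$ separating points of $\U$.
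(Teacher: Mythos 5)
Your proposal is correct and follows essentially the same route as the paper's proof: identifying $\B^\di$ isometrically with the $\delta$-dual $\overline{\Delta}$, establishing the point-evaluation bound $\norm{g(f,\widetilde{u}^*)}_{\U^*}\le\norm{f}_\B\norm{\widetilde{u}^*}_{\U^*}\norm{g}_{\B^\di}$, proving non-degeneracy by testing against $\delta_{x,u^*}$ and separation of points by $\U^*$, and verifying both reproducing properties by the same direct computations. You are in fact somewhat more thorough than the paper, which omits the well-definedness of $g\mapsto g_*$, the explicit check that $K(x,(f,\widetilde{u}^*))$ is a twin operator, and the density argument (asserted there as holding "by definition"); these additions are correct and only strengthen the write-up.
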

\begin{proof}
    Note that, by definition, 
    \begin{equation}
        \norm{g(f, \widetilde{u}^*)}_{\U^*} = \norm{g_*(f) \widetilde{u}^*}_{\U^*} = \norm{\widetilde{u}^*}_{\U^*} |g_*(f) | \leq \norm{\widetilde{u}^*}_{\U^*} \norm{f}_\B \norm{g_*}_{\B^*} = C_{f, \widetilde{u}^*} \norm{g}_{\B^\di}
    \end{equation}
    where $ C_{f, \widetilde{u}^*} = \norm{\widetilde{u}^*}_{\U^*} \norm{f}_\B$. This shows that $\B^\di$ is a vv-RKBS. 
    
    Furthermore, assume $\braket{g}{f}_\B = 0$ for all $g \in \B^\di$. Then $\braket{\delta_{x, u^*}}{f}_\B = \braket{u^*}{f(x)}_{\U} = 0$ for all $(x, u^*) \in X \times \U^*$. This implies $f(x) = 0$ for all $x \in X$, and hence $f=0$. Conversely, if $\braket{g}{f}_\B = g_*(f) = 0$ for all $f \in \B$, then by definition $g = 0$. Combining these two facts shows that $\braket{\cdot}{\cdot}_\B$ is a duality pairing.
    
    We now check the reproducing properties. For $f \in \B$, the result immediately follows from the definition of the pairing
    \begin{equation}
        \braket{K_{\U^*}(x, (\cdot, \cdot)) u^*}{f}_\B = \delta_{x, u^*}(f) = \braket{u^*}{f(x)}_\U 
    \end{equation}
    Similarly, for $g \in \B^\di$
    \begin{equation}
        \braket{g}{K_{\U}(\cdot, (f, \widetilde{u}^*))u}_\B = g_*(f) \braket{\widetilde{u}^*}{u}_{\U} = \braket{g_*(f) \widetilde{u}^*}{u}_\U = \braket{g(f, \widetilde{u}^*)}{u}_\U 
    \end{equation}
    Finally, the formula for $K(x,(f, \widetilde{u}^*))(u^*, u)$ follows by combining the reproducing properties of elements of $\B^\di$ with the specific form of $K_\U$
    \begin{align}
        K(x,(f, \widetilde{u}^*))(u^*, u) & = \braket{K_{\U^*}(x, (\cdot, \cdot)) u^*}{K_{\U}(\cdot, (f, \widetilde{u}^*)) u}_\B\nonumber \\
        & = \braket{\delta_{x, u^*}(f) \widetilde{u}^*}{u}_\U \\
        & = \delta_{x, u^*}(f) \braket{\widetilde{u}^*}{u}_\U \nonumber\\
        & = \braket{u^*}{f(x)}_\U \braket{\widetilde{u}^*}{u}_\U.\qedhere
    \end{align}
\end{proof}
Two other properties of the vv-RKHS are the uniqueness of the kernel and the fact that every symmetric positive semi-definite kernel defines a vv-RKHS. The next theorem shows that, under the weakest definition of an adjoint vv-RKBS pair, we still obtain (1) a unique kernel that satisfies a relation analogous to \eqref{eq:kernel_expressions_vv-rkhs}, (2) that every kernel satisfies a certain bound, and (3) that every kernel satisfying such a bound defines an adjoint vv-RKBS pair. However, to obtain a vv-RKBS with specific properties, additional assumptions are required. In particular, the next theorem also provides an example of additional conditions that yield an adjoint vv-RKBS pair satisfying $\norm{g}_{\B^\di} = \sup_{\norm{f}_\B\leq 1}| \braket{g}{f}_\B|$. 

\begin{theorem}[vv-RKBS kernel properties and kernels defining vv-RKBSs]
    Let $(\U,\U^\di)$ be a dual pair. If $(\B, \B^\di)$ is an adjoint vv-RKBS pair with reproducing kernel $K \colon X \times \Omega \to \twin(\U, \U^\di)$, then the kernel $K$ is unique, satisfies the equalities
    \begin{equation}
    K(x,w)(u^\di,u) = \braket{K_{\U^\di}(x,\cdot) u^\di }{K_\U(\cdot, w) u}_\B = \begin{cases}
        \braket{K_{\U^\di}(x,w)u^\di}{u}_\U \\
        \braket{u^\di}{K_\U(x,w)u}_\U ,
        \end{cases}\label{eq:charachterization_twin_kernel}
    \end{equation}
    and the bounds    \begin{subequations}
    \begin{align}
    \norm{K_\U(x,w)u}_\U &\leq C_X(x) \norm{K_\U(\cdot, w)u}_\B \eqqcolon C_X(x) C_{\Omega \times \U}(w, u), \text{ and} \label{eq:kernel_bound_1a}\\\norm{K_{\U^ \di}(x,w)u^\di}_\U &\leq C_\Omega(w) \norm{K_{\U^\di}(x, \cdot)u^\di}_{\B^\di} \eqqcolon C_\Omega(w) C_{X\times \U^\di}(x, u^\di) \label{eq:kernel_bound_1b}
    \end{align}        
    \end{subequations}
    for some functions $C_X\colon X\to \R^+$ and $C_\Omega\colon\Omega\to \R^+$.  

    Conversely, if we are given a continuous dual pair $(\U, \U^\di)$, sets $X, \Omega$, and a map $K \colon X \times \Omega \to \twin(\U, \U^\di)$ that satisfies 
    \begin{subequations}
    \begin{align}
        \norm{K_\U(x,w)u}_\U &\leq C_X(x) C_{\Omega \times \U}(w, u) \quad \text{and} \label{eq:kernel_bound_2a}\\
        \norm{K_{\U^ \di}(x,w)u^\di}_\U &\leq C_\Omega(w) C_{X\times \U^\di}(x, u^\di)\label{eq:kernel_bound_2b}
    \end{align}
    \end{subequations} 
    for some functions $C_X\colon X\to \R^+$, $C_\Omega\colon\Omega\to \R^+$, $C_{\Omega\times \U}\colon \Omega\times \U\to \R^+$, and $C_{X\times \U^\di}\colon X\times \U^\di\to \R^+$, then there exists a vv-RKBS pair $(\B, \B^\di)$ with domains $X$ and $\Omega$, respectively, and $K$ as reproducing kernel. Moreover, when additionally assuming 
    \begin{subequations}
    \begin{align}
        \norm{u^\di}_{\U^\di} &= \sup_{\norm{u}_{\U} \leq 1}|\braket{u^\di}{u}_\U| \quad \text{and} \label{eq:u-di_norming}\\
        \norm{K_\U(x, w)u}_\U &\leq C_X(x) C_\Omega(w) \norm{u}_\U,\label{eq:kernel_bound_c}
    \end{align}
    \end{subequations}
    one can define a vv-RKBS pair $(\B, \B^\di)$ satisfying $\norm{g}_{\B^\di} = \sup_{\norm{f}_\B \leq 1} | \braket{g}{f}_\B|$.
\end{theorem}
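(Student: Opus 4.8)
The plan is to handle the two implications separately and to lean on the feature-map characterisation of Definition~\ref{def:RKBS_feature} throughout the converse, so that completeness of the constructed spaces is automatic and no abstract completion has to be carried out by hand.

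\emph{Forward direction.} The two one-sided equalities in \eqref{eq:charachterization_twin_kernel} are just the defining twin-operator identities $K(x,w)(u^\di,u)=\braket{u^\di}{K_\U(x,w)u}_\U=\braket{K_{\U^\di}(x,w)u^\di}{u}_\U$. For the middle equality I would feed a kernel section into a reproducing property: since $K_\U(\cdot,w)u\in\B$ by Definition~\ref{def:adjoint_pair_vector_RKBS}, the reproducing property of $\B$ gives $\braket{K_{\U^\di}(x,\cdot)u^\di}{K_\U(\cdot,w)u}_\B=\braket{u^\di}{(K_\U(\cdot,w)u)(x)}_\U=\braket{u^\di}{K_\U(x,w)u}_\U=K(x,w)(u^\di,u)$. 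Uniqueness follows from non-degeneracy: if $\tilde K$ also reproduces, then $\braket{\tilde K_{\U^\di}(x,\cdot)u^\di-K_{\U^\di}(x,\cdot)u^\di}{f}_\B=0$ for every $f\in\B$, so \eqref{nondegen_cond_pairing_2} forces $\tilde K_{\U^\di}(x,\cdot)u^\di=K_{\U^\di}(x,\cdot)u^\di$ in $\B^\di$, hence pointwise, hence $\tilde K_{\U^\di}=K_{\U^\di}$ as operators, and the twin relation then yields $\tilde K=K$. Finally, \eqref{eq:kernel_bound_1a}--\eqref{eq:kernel_bound_1b} are exactly the bounded-point-evaluation inequalities of the vv-RKBSs $\B$ and $\B^\di$ applied to the sections $K_\U(\cdot,w)u$ and $K_{\U^\di}(x,\cdot)u^\di$, with $C_X,C_\Omega$ the evaluation constants and $C_{\Omega\times\U}(w,u):=\norm{K_\U(\cdot,w)u}_\B$, $C_{X\times\U^\di}(x,u^\di):=\norm{K_{\U^\di}(x,\cdot)u^\di}_{\B^\di}$.

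\emph{Converse, construction.} I would realise $\B$ as a weighted-$\ell^1$ feature space: take $\F:=\ell^1(\Omega\times\U,\beta)$ with weight $\beta(w,u):=C_{\Omega\times\U}(w,u)$, and set $(A\mu)(x):=\Phi(x)\mu:=\sum_{(w,u)}\mu(w,u)\,K_\U(x,w)u$. Bound \eqref{eq:kernel_bound_2a} makes this series absolutely convergent in $\U$ and gives $\norm{\Phi(x)}_{\L(\F,\U)}\le C_X(x)$, so $\mathcal{N}(A)$ is closed and by Definition~\ref{def:RKBS_feature} the quotient $\B:=\F/\mathcal{N}(A)$ is a genuine vv-RKBS of $\U$-valued functions containing each section $K_\U(\cdot,w)u=Ae_{(w,u)}$ (with $e_{(w,u)}$ the corresponding $\ell^1$ atom). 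Symmetrically I build $\B^\di$ from $\F^\di:=\ell^1(X\times\U^\di,\gamma)$ using \eqref{eq:kernel_bound_2b}, but with the enlarged weight $\gamma(x,u^\di):=\max\bigl(C_{X\times\U^\di}(x,u^\di),\,C_X(x)\norm{u^\di}_{\U^\di}\bigr)$, which will be needed for continuity of the pairing.

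\emph{Converse, pairing.} On generators I define $\braket{K_{\U^\di}(x,\cdot)u^\di}{K_\U(\cdot,w)u}_\B:=K(x,w)(u^\di,u)$ and extend bilinearly. Rewriting a finite combination two ways, $\braket{g}{f}_\B=\sum\nu(x,u^\di)\braket{u^\di}{f(x)}_\U=\sum\mu(w,u)\braket{g(w)}{u}_\U$ (where $\mu,\nu$ represent $f,g$), shows the value depends only on the functions $f$ and $g$, so the pairing is well defined; specialising $\nu$ or $\mu$ to a single atom then yields both reproducing properties and, via non-degeneracy of $(\U,\U^\di)$, conditions \eqref{nondegen_cond_pairing_1}--\eqref{nondegen_cond_pairing_2}. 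Continuity comes from the factorised estimate $\abs{K(x,w)(u^\di,u)}=\abs{\braket{u^\di}{K_\U(x,w)u}_\U}\le\norm{u^\di}_{\U^\di}C_X(x)C_{\Omega\times\U}(w,u)\le\gamma(x,u^\di)\beta(w,u)$, which uses the assumed continuity of the dual pair; summing and infimising over representations gives $\abs{\braket{g}{f}_\B}\le\norm{g}_{\B^\di}\norm{f}_\B$. By the forward direction, $K$ is then the unique reproducing kernel of $(\B,\B^\di)$.

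\emph{The ``moreover'' and the main obstacle.} To obtain $\norm{g}_{\B^\di}=\sup_{\norm{f}_\B\le1}\abs{\braket{g}{f}_\B}$ I would instead take $\B^\di$ to be the $\delta$-dual $\overline{\Delta}\subseteq\B^*$, so that this isometry holds by definition of the dual norm. Here \eqref{eq:kernel_bound_c} lets me build $\B$ from the weight $\beta(w,u)=C_\Omega(w)\norm{u}_\U$, yielding $\norm{K_\U(\cdot,w)u}_\B\le C_\Omega(w)\norm{u}_\U$; mapping $\delta_{x,u^\di}\mapsto K_{\U^\di}(x,\cdot)u^\di$ and checking that $\sum c_j\delta_{x_j,u^\di_j}=0$ in $\B^*$ is equivalent to $\sum c_jK_{\U^\di}(x_j,\cdot)u^\di_j=0$ on $\Omega$ (using density of the sections in $\B$ and non-degeneracy of $(\U,\U^\di)$) identifies $\Delta$ with $\spn\{K_{\U^\di}(x,\cdot)u^\di\}$. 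I expect the crux of the whole proof to be the last step: showing that a general $G\in\overline{\Delta}$ corresponds to a function valued in $\U^\di$, not merely in $\U^*$. For each $w$ the map $u\mapsto G(K_\U(\cdot,w)u)$ is a bounded functional on $\U$ of norm $\le C_\Omega(w)\norm{G}_{\B^*}$, and it is precisely the norming identity \eqref{eq:u-di_norming}, together with completeness of $\U^\di$, that turns the approximants of a $\B^*$-Cauchy sequence into a $\U^\di$-Cauchy sequence whose limit $g(w)\in\U^\di$ defines the function; the same estimate gives $\norm{g(w)}_{\U^\di}\le C_\Omega(w)\norm{g}_{\B^\di}$, so $\B^\di$ is a vv-RKBS, and the reproducing identities together with the forward-direction uniqueness once more single out $K$.
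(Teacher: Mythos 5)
Your proof is correct, and while your forward direction is essentially the paper's (twin identities plus the reproducing property of $\B$ applied to the section $K_\U(\cdot,w)u$; for uniqueness you invoke non-degeneracy \eqref{nondegen_cond_pairing_2} of the $\B$-pairing to identify the adjoint sections, where the paper instead tests against $K_{\U^\di}(x,\cdot)u^\di$ and uses non-degeneracy of $(\U,\U^\di)$ — the two are interchangeable), your converse takes a genuinely different route. The paper, following Heeringa et al., spans the kernel sections, $\V = \spn\set{K_\U(\cdot,w)u}$ and $\V^\di = \spn\set{K_{\U^\di}(x,\cdot)u^\di}$, defines the pairing on these spans, equips $\V$ with the weighted sup norm $\sup_{x}\norm{f(x)}_\U/C_X(x)$ and $\V^\di$ with the maximum of a weighted sup norm and the dual norm, and then takes abstract completions; for the ``moreover'' part it re-norms $\V^\di$ by the pure dual norm and uses \eqref{eq:u-di_norming} together with \eqref{eq:kernel_bound_c} to recover the point-evaluation bound $\norm{g(w)}_{\U^\di}\leq C_\Omega(w)\norm{g}_{\V^\di}$. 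You instead realize $\B$ and $\B^\di$ as quotients of weighted $\ell^1$ feature spaces via Definition \ref{def:RKBS_feature}, which makes completeness and the vv-RKBS property automatic, and you obtain well-definedness of the pairing by rewriting it in the two one-sided forms; for the ``moreover'' part you take the $\delta$-dual $\overline{\Delta}\subseteq\B^*$, so the dual-norm identity is definitional, and the work shifts to showing that every $G\in\overline{\Delta}$ is realized by a $\U^\di$-valued function, which is exactly where \eqref{eq:u-di_norming} and completeness of $\U^\di$ enter through the estimate $\norm{g_n(w)-g_m(w)}_{\U^\di}\leq C_\Omega(w)\norm{G_n-G_m}_{\B^*}$. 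The two constructions are mirror images: the paper completes a space of functions and must check that the completion still consists of $\U^\di$-valued functions (a point it treats briskly), whereas you complete inside $\B^*$ and must check that the limiting functionals are functions; both hinge on the same weighted point-evaluation estimate. Your route buys automatic completeness and a definitionally clean dual-norm identity; the paper's buys the minimal spaces generated by kernel sections, with the pairing exhibited directly on those sections. Two details worth making explicit in your write-up: the weights $\beta,\gamma$ must be taken strictly positive (atoms with vanishing weight have zero sections by \eqref{eq:kernel_bound_2a} and can simply be discarded), and the constant $C$ from continuity of $\braket{\cdot}{\cdot}_\U$ should be carried through your pairing bound, yielding $\abs{\braket{g}{f}_\B}\leq C\norm{g}_{\B^\di}\norm{f}_\B$ rather than constant $1$ — harmless, since the paper's notion of continuous pairing allows such a constant (cf.\ Theorem \ref{thm:integral_RKBS_is_vv-RKBS}).
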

\begin{proof}
    Let $(\B, \B^\di)$ be an adjoint vv-RKBS pair. By first using the definition of a twin operator and subsequently the reproducing property, we obtain \eqref{eq:charachterization_twin_kernel}.
    Now, assume there is another kernel $\widetilde{K}$. Again by the reproducing property:
    \begin{equation}
        0 = \braket{g(w)}{u} - \braket{g(w)}{u} = \braket{g}{\left(K_\U(x,w) - \widetilde{K}_\U(x,w)\right)u}_\B
    \end{equation}
    holds for all $g\in \B^\di$. When $g = K_{\U^\di}(x, \cdot)u^\di \in \B^\di$ and we use the reproducing property $\braket{u^\di}{f(x)}_\U = \braket{K_{\U^\di}(x, \cdot)u^\di}{f}_\B$, we get
    \begin{equation}
    \begin{split}
        0 &= \braket{K_{\U^\di}(x, \cdot)u^\di}{\left(K_\U(\cdot,w) - \widetilde{K}_\U(\cdot,w)\right)u}_\B = \braket{u^\di}{\left(K_\U(x,w) - \widetilde{K}_\U(x,w)\right)u}_\U
    \end{split}    
    \end{equation}
    for all $(x,w,u, u^\di) \in X \times \Omega \times \U \times\U^\di$. By the non-degeneracy condition \eqref{nondegen_cond_pairing_1} of the dual pair $(\U, \U^\di)$, we conclude $K_\U(x,w) = \widetilde{K}_\U(x,w)$. Employing the same reasoning for $K_{\U^\di}$ shows that $K_{\U^\di}=\widetilde{K}_{\U^\di}$. Hence, both $K_\U$ and $K_{\U^\di}$ are unique. Thus, by \eqref{eq:charachterization_twin_kernel}, the full kernel is unique.
    
    To show the converse, we follow the approach in Heeringa et al.\ \cite{heeringa2025deep}. For a kernel $K \colon X \times \Omega \to \twin(\U, \U^\di)$ satisfying \eqref{eq:kernel_bound_2a} and \eqref{eq:kernel_bound_2b} define the vector spaces:
    \begin{subequations}
        \begin{align}
            \V & \coloneqq \spn\set{K_\U(\cdot, w) u \colon X \to \U \given (w,u) \in \Omega \times \U}, \\
            \V^\di & \coloneqq \spn\set{K_{\U^\di}(x, \cdot) u^\di \colon \Omega \to \U^\di \given (x,u^\di) \in X \times \U^\di}
        \end{align}
    \end{subequations}
    and the bilinear mapping
    \begin{equation}
        \braket{\sum_{j} K_{\U^\di}(x_j, \cdot)u_j^\di}{\sum_i K_\U(\cdot, w_i) u_i}_\V \coloneqq \sum_{i,j} K(x_j, w_i)(u_j^\di, u_i)
    \end{equation}
    between them. 
    Equip $\V$ with the norm
    \begin{equation}
        \norm{f}_\V \coloneqq \sup_{x\in X} \frac{\norm{f(x)}_\U}{C_X(x)}.
    \end{equation}
    This norm is well-defined since
    \begin{equation}
        \norm{f(x)}_\U = \norm{\sum_{i} K_\U(x, w_i)u_i}_\U \leq \sum_i \norm{K_\U(x, w_i)u_i}_\U \leq C_X(x) \sum_i C_{\Omega \times \U}(w_i, u_i).
    \end{equation}
    By definition, we also have
    \begin{equation}
        \norm{f(x)}_\U \leq C_X(x) \norm{f}_\V
    \end{equation}
    for all $f \in \V$. Let $\B$ be the completion of $\V$ with respect to this norm. This makes $\B$ a Banach space of functions, and hence, by the above inequality, a vv-RKBS.
    
    For $\V^\di$, define the norm as
    \begin{equation}
        \norm{g}_{\V^\di} \coloneqq \max\left(\sup_{w\in \Omega} \frac{\norm{g(w)}_{\U^\di}}{C_\Omega(w)}, \sup_{\norm{f}_\B\leq 1} |\braket{g}{f}_{\V^\di,\B}|\right),
    \end{equation}
    where the pairing $\braket{\cdot}{\cdot}_{\V^\di,\B}$ is the extension of the pairing $\braket{\cdot}{\cdot}_{\V}$ from $\V^\di \times \V$ to $\V^\di \times \B$. This ensures
    \begin{equation}
        \norm{g(w)}_{\U^\di} \leq C_\Omega(w) \norm{g}_{\V^\di}, \quad |\braket{g}{f}_\B| \leq \norm{g}_{\V^\di} \norm{f}_\B.
    \end{equation}
    Let $\B^\di$ be the completion of $\V^\di$ with respect to $\norm{\cdot}_{\V^\di}$. Extending the pairing $\braket{\cdot}{\cdot}_{\V}$ continuously via the density of $\V,\V^\di$ in the corresponding completions, yields a pairing $\braket{\cdot}{\cdot}_{\B}$ satisfying
    \begin{equation}
    |\braket{g}{f}_\B| \leq \norm{g}_{\B^\di} \norm{f}_\B.
    \end{equation}
    
    To show that $(\B, \B^\di)$ is a dual pair, we verify non-degeneracy. First, note that the reproducing property on $\V^\di$ still holds on $\B^\di$ due to continuity of the duality pairing on $(\U, \U^\di)$. Then, observe that
    \begin{equation}
        \forall f \in \B \setminus \{0\} \colon \braket{g}{f}_\B = 0 \implies \forall (w, u) \in \Omega \times \U \colon \braket{g(w)}{u}_\U = \braket{g}{K_\U(\cdot, w) u}_\B = 0 \implies g = 0,
    \end{equation}
    where the final implication follows from the non-degeneracy of the dual pair $(\U, \U^\di)$, i.e., $\braket{g(w)}{u}_\U = 0$ for all $u \in \U$ implies $g(w) = 0$. A similar argument establishes non-degeneracy condition \eqref{nondegen_cond_pairing_1}. Hence, we obtain a continuous dual pairing $(\B, \B^\di)$, with $\B$ and $\B^\di$ both vv-RKBSs. Thus, they form an adjoint pair of vv-RKBSs.
    
    Now, suppose that \eqref{eq:u-di_norming} and \eqref{eq:kernel_bound_c} hold. Equip $\V^\di$ with the induced dual-norm
    \begin{equation}
        \norm{g}_{\V^\di} \coloneqq \sup_{\norm{f}_\B\leq 1} |\braket{g}{f}_\B|.
    \end{equation}
    Using $\norm{g(w)}_{\U^\di} = \sup_{\norm{u}_{\U} \leq 1}|\braket{g(w)}{u}_\U|$ and the definition of the pairing $\braket{\cdot}{\cdot}_\B$, we get
    \begin{equation}
        \norm{g(w)}_{\U^\di} = \sup_{\norm{u}_\U \leq 1} \left| \braket{g(w)}{u}_\U \right| = \sup_{\norm{u}_\U \leq 1} \left| \braket{g}{K_\U(\cdot, w)u}_\B \right| \leq C_\Omega(w) \sup_{\norm{f}_\B \leq 1} \left| \braket{g}{f}_\B \right| = C_\Omega(w) \norm{g}_{\V^\di},
    \end{equation}
    where the last inequality follows from
    \begin{equation}
        \norm{K_\U(\cdot, w)u}_\B = \sup_{x \in X} \frac{\norm{K_\U(x, w)u}_\U}{C_X(x)} \leq \sup_{x \in X} \frac{C_X(x) C_\Omega(w)\norm{u}_\U}{C_X(x)} = C_\Omega(w)\norm{u}_\U \leq C_\Omega(w).
    \end{equation}
    Following the same reasoning as above, taking completions yields a vv-RKBS, a continuous duality pairing, and thus, a vv-RKBS pair with the required property.
\end{proof}

Finally, for a vv-RKHS, it is known that it is isomorphic \cite[Proposition 2.7]{carmeli2006vector} to a scalar RKHS. Below, we show that this remains true even without any additional assumptions.

\begin{theorem}[Equivalence scalar RKBS (pair) and vv-RKBS (pair)]
    Let $(\B, \B^\di)$ be a vv-RKBS dual pair over $X$, $\Omega$ to a dual pair $(\U, \U^\di)$ and with kernel $K \colon X \times \Omega \to \twin(\U, \U^\di)$. Define the spaces $\B_s$ and $\B_s^\di$ as
    \begin{subequations}
        \begin{align}
            \B_s \coloneqq \{\tilde{f} \colon X \times \U^\di \rightarrow \R \mid \tilde{f}(x, u^\di)=\braket{u^\di}{f(x)}_\U,\, f\in \B  \}, \quad \norm{\tilde{f}}_{\B_s}\coloneqq \norm{f}_\B, \\
            \B_s^\di \coloneqq \{\tilde{g} \colon \Omega \times \U \rightarrow \R \mid \tilde{g}(w, u)=\braket{g(w)}{u}_\U,\, g\in \B^\di  \}, \quad \norm{\tilde{g}}_{\B_s^\di}\coloneqq \norm{g}_{\B^\di}
            \label{eq:kernel_expressions}
        \end{align}
    \end{subequations}
    the pairing between them as
    \begin{equation}
        \braket{\tilde{g}}{\tilde{f}}_{\B_s} \coloneqq \braket{g}{f}_\B
    \end{equation}
    and the scalar kernel as
    \begin{equation}
        K_s \colon (X \times \U^\di) \times (\Omega \times \U) \to \R,\;((x,u^\di), (w, u)) \mapsto K(x,w)(u^\di,u)\label{eq:kernel_Ks_definition}
    \end{equation}
    Then $(\B_s, \B_s^\di)$ is an adjoint pair of RKBSs with kernel $K_s$, $\B_s \cong \B$, and $\B_s^\di \cong \B^\di$.
\end{theorem}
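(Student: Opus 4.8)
The plan is to exhibit explicit isometric isomorphisms $\Psi \colon \B \to \B_s$ and $\Psi^\di \colon \B^\di \to \B_s^\di$, transfer the pairing and the reproducing structure through them, and finally identify $K_s$ as the reproducing kernel of the resulting scalar adjoint pair. Set $\Psi(f) = \tilde f$ and $\Psi^\di(g) = \tilde g$ exactly as in the statement. Linearity of both maps is immediate from the bilinearity of $\braket{\cdot}{\cdot}_\U$. The crucial point is injectivity: if $\tilde f = 0$, then $\braket{u^\di}{f(x)}_\U = 0$ for all $(x, u^\di) \in X \times \U^\di$, so for each fixed $x$ the non-degeneracy condition \eqref{nondegen_cond_pairing_1} applied to the pair $(\U, \U^\di)$ forces $f(x) = 0$, hence $f = 0$ as an element of the function space $\B$; the analogous argument using \eqref{nondegen_cond_pairing_2} handles $\Psi^\di$. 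Surjectivity holds by construction, and both maps are isometric by the very definitions of $\norm{\cdot}_{\B_s}$ and $\norm{\cdot}_{\B_s^\di}$. This simultaneously shows those norms are well defined (each $\tilde f$ determines a unique preimage $f$) and yields $\B_s \cong \B$, $\B_s^\di \cong \B^\di$ as Banach spaces, so $\B_s, \B_s^\di$ are in particular complete.

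Next I would transfer the pairing by setting $\braket{\tilde g}{\tilde f}_{\B_s} \coloneqq \braket{g}{f}_\B$, following the convention of Definition \ref{def:adjoint_pair_scalar_RKBS}; this is well defined because $\Psi, \Psi^\di$ are bijections. Continuity and the duality axioms transfer verbatim through the isometries: $|\braket{\tilde g}{\tilde f}_{\B_s}| = |\braket{g}{f}_\B| \leq \norm{g}_{\B^\di}\norm{f}_\B = \norm{\tilde g}_{\B_s^\di}\norm{\tilde f}_{\B_s}$, and each non-degeneracy condition for $(\B_s, \B_s^\di)$ is precisely the corresponding condition for $(\B, \B^\di)$ read through the isomorphisms.

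The core computation is the identification of $K_s$ with the original kernel. Using the twin-operator identities $K(x,w)(u^\di, u) = \braket{K_{\U^\di}(x,w)u^\di}{u}_\U = \braket{u^\di}{K_\U(x,w)u}_\U$, I would observe that the section $K_s((x,u^\di), \cdot)$ equals $\Psi^\di\!\left(K_{\U^\di}(x,\cdot)u^\di\right)$, since evaluating the latter at $(w,u)$ gives $\braket{K_{\U^\di}(x,w)u^\di}{u}_\U = K(x,w)(u^\di,u)$. As $K_{\U^\di}(x,\cdot)u^\di \in \B^\di$ by hypothesis, this places $K_s((x,u^\di),\cdot) \in \B_s^\di$, and the reproducing property of $\B$ gives
\[
\braket{K_s((x,u^\di),\cdot)}{\tilde f}_{\B_s} = \braket{K_{\U^\di}(x,\cdot)u^\di}{f}_\B = \braket{u^\di}{f(x)}_\U = \tilde f(x,u^\di).
\]
Symmetrically, $K_s(\cdot,(w,u)) = \Psi\!\left(K_\U(\cdot,w)u\right) \in \B_s$, and the reproducing property of $\B^\di$ yields $\braket{\tilde g}{K_s(\cdot,(w,u))}_{\B_s} = \braket{g(w)}{u}_\U = \tilde g(w,u)$. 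These are exactly the two requirements of Definition \ref{def:adjoint_pair_scalar_RKBS}, so $K_s$ is the reproducing kernel of $(\B_s, \B_s^\di)$. Boundedness of point evaluations on $\B_s$ then comes for free from the reproducing property, namely $|\tilde f(x,u^\di)| \leq \norm{K_s((x,u^\di),\cdot)}_{\B_s^\di}\norm{\tilde f}_{\B_s}$, confirming that $\B_s$ (and likewise $\B_s^\di$) is genuinely a scalar RKBS in the sense of Definition \ref{def:RKBS_bounded_point_eval}.

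I do not expect a deep obstacle here; once the right identifications are in place the argument is essentially bookkeeping. The one step demanding genuine care is the well-definedness of the scalar spaces and their norms: because $\B$ and $\B^\di$ consist of $\U$- and $\U^\di$-valued functions, distinct elements could a priori collapse under $f \mapsto \braket{\cdot}{f(\cdot)}_\U$, so the entire construction hinges on invoking non-degeneracy of $(\U, \U^\di)$ to guarantee that $\Psi$ and $\Psi^\di$ are injective. A secondary subtlety is that continuity of the pairing $(\U, \U^\di)$ is never assumed, so I must derive bounded point evaluation on $\B_s$ from the $\B$-pairing and the reproducing property rather than from any norm inequality on $\U$ itself.
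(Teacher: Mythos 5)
Your proof is correct and follows essentially the same route as the paper's: establish the isometric isomorphisms $f \mapsto \tilde f$, $g \mapsto \tilde g$ via non-degeneracy of the $(\U,\U^\di)$ pairing, then identify the kernel sections $K_s((x,u^\di),\cdot)$ and $K_s(\cdot,(w,u))$ with the images of $K_{\U^\di}(x,\cdot)u^\di$ and $K_\U(\cdot,w)u$ through the twin-operator identities, so that the reproducing properties transfer directly. You are in fact somewhat more thorough than the paper, which leaves the bounded point evaluations on $\B_s$, the completeness, and the transfer of pairing continuity and non-degeneracy implicit in the isomorphism; your observation that bounded point evaluation on $\B_s$ must come from the reproducing property and the $\B$-pairing (since continuity of $\braket{\cdot}{\cdot}_\U$ is never assumed) is a genuine subtlety the paper glosses over.
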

\begin{proof}
    First, we show $\B_s$ is isometrically isomorphic to $\B$, where the isometric isomorphism between $\B_s^\di$ and $\B^\di$ follows from a similar argument and is hence omitted. By definition, $f \to \tilde{f} \coloneqq \braket{\cdot}{f(\cdot)}_\U$ is a linear surjective map from $\B$ to $\B_s$. To show injectivity (and hence bijectivity), assume we have $f_1$ and $f_2$ such that $\braket{u^\di}{f_1(x)}_\U = \braket{u^\di}{f_2(x)}_\U$ for all $(x, u^\di) \in X\times \U^\di$. Hence, for a given $x \in X$, $\braket{u^\di}{f_1(x) - f_2(x)}_\U = 0$ for all $u^\di \in \U^\di$. By the nondegeneracy condition \eqref{nondegen_cond_pairing_1} of the pairing $\braket{\cdot}{\cdot}_\U$, we get $f_1(x) = f_2(x)$ for every $x$. As we are dealing with a Banach space of functions, this shows that $f_1 = f_2$ and therefore shows injectivity. Finally, the isometric property follows from $\norm{\tilde{f}}_{\B_s} \coloneqq \norm{f}_\B$.

    What remains is to show that $K_s$ is the kernel of the dual pair $(\B_s, \B_s^\di)$. The combination of \eqref{eq:charachterization_twin_kernel} and \eqref{eq:kernel_Ks_definition} implies that
    \begin{subequations}
    \begin{align}
        K_s((\cdot, \bullet), (w, u)) &= \braket{\bullet}{K_{\U}(\cdot, w) u}_\U \in \B_s \\
        K_s((x,u^\di), (\cdot, \bullet)) &= \braket{K_{\U^\di}(x, \cdot) u^\di}{\bullet}_\U \in \B_s^\di
    \end{align}  \label{eq:K_s_as_function_from_scalar_RKBSs}
    \end{subequations}
    and thus
    \begin{subequations}
    \begin{align}
    \tilde{f}(x, u^\di) &= \braket{u^\di}{f(x)}_\U = \braket{K_{\U^\di}(x,\cdot)u^\di}{f}_\B = \braket{K_s((x,u^\di), (\cdot, \bullet))}{\tilde{f}}_{\B_s} \\
     \tilde{g}(w, u) &= \braket{g(w)}{u}_\U = \braket{g}{K_\U(\cdot,w)u}_\B = \braket{\tilde{g}}{K_s((\cdot, \bullet), (w, u))}_{\B_s}
    \end{align}
    \end{subequations}
    where the second equality corresponds to the reproducing kernel property of the vv-RKBS pair $(\B, \B^\di)$ and the final equality follows from $\braket{\tilde{g}}{\tilde{f}}_{\B_s} \coloneqq \braket{g}{f}_\B$ and \eqref{eq:K_s_as_function_from_scalar_RKBSs}.
\end{proof}

\section{Integral and neural RKBS} \label{sec:integral_and_neural_vv-RKBS}

While the previous sections discussed vector-valued RKBSs in general, they did not yet address their connection to neural networks. In the scalar-valued case, the relevant RKBS is the scalar integral RKBS, particularly the subclass known as neural RKBSs. We begin by reviewing scalar integral and neural RKBS pairs, then extend the discussion to the vector-valued setting, and finally examine key properties of the vector-valued integral and neural RKBS pairs.

\subsection{Scalar-valued integral and neural RKBS} \label{sec:integral_and_neural_scalar-RKBS}
The integral RKBS is defined via the feature map construction in Definition \ref{def:RKBS_feature}. In this setting, functions are constructed by integrating a feature function $\phi \in C_0(X \times \Omega)$ against a Radon measure $\mu \in \M(\Omega)$ or $\rho \in \M(X)$.

\begin{definition}[Scalar-valued integral RKBS pair]
Given locally-compact Hausdorff $X,\Omega$ and $\phi \in C_0(X \times \Omega)$, for $\mu \in \M(\Omega)$ and $\rho \in \M(X)$ define 
\begin{align}
    (A_{\Omega\to X}\mu)(x)\coloneqq \int_\Omega \phi(x,w)d\mu(w), \label{eq:scalar_integral_RKBS_f} \\
    (A_{X\to \Omega}\rho)(w) \coloneqq\int_X\phi(x,w)d\rho(x). \label{eq:scalar_integral_RKBS_g} 
\end{align}
The adjoint pair of scalar-valued RKBS $(\B, \B^\di)$ defined by
\begin{subequations}
\begin{align}
    \B \coloneqq \{ f =A_{\Omega \to X}\mu \mid \mu \in \M(\Omega) \}, & \quad \norm{f}_\B \coloneqq \inf_{f = A_{\Omega \to X}\mu} |\mu|(\Omega),  \\
    \B^\di \coloneqq \overline{\B^\di_{\text{pre}}} \coloneqq \overline{\{ g =A_{X \to \Omega}\rho \mid \rho \in \M(X) \}}, &  \quad\norm{g}_{\B^\di} \coloneqq \sup_{w \in \Omega} |g(w)|,
\end{align}
\end{subequations}
with the pairing 
\begin{equation}
    \braket{g}{f}_{\B} = \braket{\mu}{g}_{C_0(\Omega)} \stackrel{g\in\B^\di_{\text{pre}}}= \int_{X\times \Omega}\phi(x,w)d(\rho\otimes\mu)(x,w),
\end{equation}
where $\mu\in \M(\Omega)$ is any representative of $f\in \B$, $\rho\in \M(X)$ is any representative of $g\in \B_{\text{pre}}^\di$, $\braket{\mu}{g}_{C_0(\Omega)}=\int_\Omega g(w)d\mu(w)$ and $\rho\otimes\mu$ denotes the product measure, is called an integral RKBS pair, and we refer to $\B$ as an integral RKBS.
Its kernel $K \colon X \times \Omega \to \R$ is
\begin{equation}
    K(x,w) = \phi(x,w) = \braket{A_{X \to \Omega} \delta_x}{A_{\Omega \to X} \delta_w}_{\B}\qedhere
\end{equation}
\label{def:scalar_integral_RKBS}
\end{definition}
\begin{remark}
Effectively, for a given integral RKBS pair $(\B,\B^\di)$, the space $\B$ consists of functions $f\colon X\to\R$ for which there exists a $\mu\in\M(\Omega)$ such that 
\begin{equation}
    f(x) = \int_\Omega \phi(x,w)d\mu(w),
\end{equation}
and $\B^\di$ of functions $g\colon\Omega\to\R$ which are limits of functions $g_{\text{pre}}\colon\Omega\to\R$ for which there exists a $\rho\in \M(X)$ such that
\begin{equation}
    g_{\text{pre}}(w) = \int_X \phi(x,w)d\rho(x).
\end{equation}%
Since $\phi\in C_0(X\times \Omega)$, $f\in C_0(X)$ and $g\in C_0(\Omega)$.
\end{remark}

In Spek et al.\ \cite{spek2025duality}, it is proven that the pairing is well-defined, that $(\B, \B^\di)$ indeed forms an adjoint pair of RKBSs with kernel $K$, and that $(\B^\di)^*$ is isometrically isomorphic to $\B$; that is, $\B^\di$ is a predual of $\B$. In particular, note
\begin{equation}
    \left|\braket{g}{f}_\B\right| = \left|\braket{\mu}{g}_{C_0(\Omega)}\right| = \left|\int_\Omega g(w) d \mu(w)\right| \leq \left(\sup_{w\in \Omega} |g(w)| \right)|\mu|(\Omega) \implies \sup_{\norm{f}_\B \leq 1} | \braket{g}{f}_\B| \leq \sup_{w \in \Omega} |g(w)|
\end{equation}
where the last inequality follows by taking the infimum over $\mu$ such that $f = A_{\Omega \to X} \mu$. Using this, and noting that $g \in C_0(\Omega)$ attains its maximum at some $w = w_\text{max}$, we find
\begin{equation}
    \norm{g}_{\B^\di} = \sup_{w \in \Omega} |g(w)| = |\braket{\delta_{w_{\max}}}{g}_{C_0(\Omega)}| \leq \sup_{\norm{f}_\B \leq 1} | \braket{g}{f}_\B| \leq \sup_{w \in \Omega} |g(w)| = \norm{g}_{\B^\di}.
\end{equation} 
Hence, $\norm{g}_{\B^\di} = \sup_{\norm{f}_\B \leq 1} | \braket{g}{f}_\B|$. This shows that the integral RKBS has additional structure beyond that described in Definition~\ref{def:adjoint_pair_scalar_RKBS}. In particular, it corresponds to point~(4) in Theorem~\ref{thm:equivalence_adjoint_banach_space_defs}, since it can also be shown that the $ K(x, \cdot) = \phi(x, \cdot)$ functions are dense in $\B^\di$; see Theorem~\ref{thm:B_diamond_in_integral_RKBS_is_delta_dual} for a proof in the vector-valued setting.

Although the integral RKBS formulation in Definition \ref{def:scalar_integral_RKBS} is closely related to neural networks, it does not inherently exhibit the characteristic structure of an affine transformation followed by a nonlinear activation. This structure can be recovered by selecting a specific form of $\phi$ within the integral RKBS framework. 
\begin{definition}[Scalar-valued neural RKBS pairs]
    Let $(\V, \V^\di)$ be a dual pair of normed vector spaces, and assume $X \subseteq \V$ is compact and $\widetilde{\Omega} \subseteq \V^\di$ locally-compact Hausdorff. Let $\Omega \coloneqq \widetilde{\Omega} \times \R$, let $\sigma \colon \R \to \R$ be a measurable activation function, and let $\beta \colon \Omega \to \R$ be a measurable positive function such that $\phi \in C_0(X \times \Omega)$, where
    \begin{equation}
        \phi(x,w) = \phi(x, (\omega, b)) \coloneqq \sigma(\braket{\omega}{x}_\V + b) \beta(w)
        \end{equation}
    with $w = (\omega, b) \in \Omega$.
    
    If $(\B, \B^\di)$ is an integral RKBS pair with kernel $\phi$, then we refer to it as a neural RKBS pair.    
\end{definition}
\begin{remark}
The function $\beta$ in the above definition ensures that the kernel integrals in Equations \eqref{eq:scalar_integral_RKBS_f} and \eqref{eq:scalar_integral_RKBS_g} are well defined for commonly used activation functions. One could avoid introducing the weighting function $\beta$ by modifying the definition of the integral RKBS. For example, we can restrict to measures for which the integrals are already well-defined. Another approach is taken in Barron spaces, a well-known example of a neural RKBS that also treats neural networks but incorporates $\beta$ into the total variation norm of the measures instead \cite{ma2019priori, ma2020towards, spek2025duality}. Yet another option is to consider subspaces of measures satisfying moment conditions, which are dual to spaces of continuous functions with controlled growth \cite{bartolucci2024lipschitz}.

Although such alternatives exist, we retain the $C_0(X \times\Omega)$ assumption, as it simplifies the analysis without introducing significant limitations. In practice, exceedingly large weights are rarely desirable, so applying a continuous cut-off to zero for such values is both reasonable and effective.
\end{remark}

\subsection{Vector-valued integral and neural RKBS}
To extend the previous section from scalar outputs to outputs in a Banach space $\U$, two natural approaches would be to either make $\phi$ a $\U$-valued function, or to replace the scalar-valued measures with $\U$-valued ones. We follow the second approach, as also done in Bartolucci et al.\ \cite{bartolucci2024neural}. With the first option, it is challenging to ensure that the represented functions are expressive enough to take any value in a high- or even infinite-dimensional space $\U$ at each domain point. It would require us to work explicitly with parameter spaces $\Omega$ of comparable dimension and with measures defined on them. In contrast, by working with vector-valued measures, we can directly use any scalar-valued $\phi$ while maintaining expressivity in terms of the attained values in $\U$. This approach generalizes the standard vv-RKHS construction starting from a scalar kernel of the form \eqref{eq:vv-RKHS-from-scalar}. Moreover, it allows us to work with spaces of measures on locally-compact parameter domains, which behave significantly better in terms of duality.

For a Banach space $\U$ and $\Omega$ locally-compact Hausdorff, we define $\M(\Omega; \U)$ to be the set of regular countably additive $\U$-valued vector measures over $\Omega$ equipped with the Borel $\sigma$-algebra and with finite total variation. In this setting, a vector-valued generalization \cite[Theorem 1]{meziani2009dual} of the Riesz representation theorem provides an isometric isomorphism
\begin{equation}
    C_0(\Omega; \U)^* \cong \M(\Omega; \U^*).
    \label{eq:duality_C0_and_RadonMeasureSpace}
\end{equation}
In the special case when $\Omega$ is compact Hausdorff, this duality result is known as Singer's representation theorem \cite{singer1957linear}. In the context of Hilbert space-valued measures on second countable locally-compact Hausdorff domains, it also appears as Theorem 9 of Carmeli et al.\ \cite{carmeli2010vector}.

For the scalar-valued case $\U=\U^*=\R$, the pairing between $C_0(\Omega; \U)$ and $\M(\Omega; \U^*)\cong C_0(\Omega; \U)^*$  appeared in Definition~\ref{def:adjoint_pair_scalar_RKBS}. There, it serves as the pairing between $\B$ and $\B^\di$ in the scalar-valued integral RKBS pair and is defined by
\begin{equation*}
    \braket{\mu}{g}_{C_0(\Omega)} = \int_\Omega g(w)\, d\mu(w).
\end{equation*}
In the vector-valued analogue of Definition~\ref{def:adjoint_pair_scalar_RKBS}, we similarly want to introduce a pairing analogous to the $C_0(\Omega; \U)$--$\M(\Omega; \U^*)$ pairing.

To this end, let $(\U, \U^\di)$ be a dual pair equipped with a continuous duality pairing and write a simple function $g\colon \Omega \to \U^\di$ as
\begin{equation}
    g = \sum_{j=1}^m u_j^\di \mathbbm{1}_{B_j}
\end{equation}
where the sets $B_j \subseteq \Omega$ are disjoint. For $\mu \in \M(\Omega; \U)$, we then define
\begin{equation}
    \braket{\mu}{g}_{C_0(\Omega;\U^\di)} \coloneqq \int_\Omega \braket{g(w)}{d\mu(w)}_\U \coloneqq \sum_{j=1}^m \braket{u_j^\di}{\mu(B_j)}_\U.\label{eq:braket_integral_defining_equation}
\end{equation}
Since $\braket{\cdot}{\cdot}_\U : \U^\di \times \U \to \R$ is continuous, there exists $C > 0$ such that $\braket{u^\di}{u}_\U \leq C \norm{u^\di}_{\U^\di} \norm{u}_\U$ for all $u^\di \in \U^\di$ and $u \in \U$. Consequently, the linear operator $g \mapsto \braket{\mu}{g}_{C_0(\Omega;\U^\di)}$ is bounded on the space of simple functions
\begin{equation}
\begin{split}
    \left|\braket{\mu}{g}_{C_0(\Omega;\U^\di)} \right| &= \left| \sum_{j=1}^m \braket{u_j^\di}{\mu(B_j)}_\U \right| \leq C \sum_{i=1}^m \norm{u_j^\di}_{\U^\di} \norm{\mu(B_j)}_\U \leq C\left(\max_{i=1, \ldots, m} \norm{u_j^\di}\right) \sum_{j=1}^m \norm{\mu(B_j)}_\U \\
    & = C\left( \sup_{w \in \Omega} \norm{g(w)}_{\U^\di} \right) \sum_{j=1}^m \norm{\mu(B_j)}_\U \leq C \left( \sup_{w \in \Omega} \norm{g(w)}_{\U^\di} \right) |\mu|_\U(\Omega).
\end{split}
\label{eq:bound_g_braket_mu}
\end{equation}
where the last inequality follows from the definition of the total variation $|\mu|_\U(\Omega)$. 

To extend this definition beyond simple functions, we consider the space of functions that are uniform limits of simple functions and equip this space with the supremum norm. The upper bound $C \left( \sup_{w \in \Omega} \norm{g(w)}_{\U^\di} \right) |\mu|_\U(\Omega)$ remains computable for all functions in this space in which the simple functions are dense by definition. Hence, we can extend $g \mapsto \braket{\mu}{g}_{C_0(\Omega;\U^\di)}$ continuously. For $f \colon X \to \U$ and $\rho \in \M(X; \U^\di)$, we define the pairing $\braket{\rho}{f}_{C_0(X;\U)}$ in the same way.
\begin{remark}
    The above construction coincides with the integral in Appendix~A of Carmeli et~al.\ \cite{carmeli2010vector}. They extend using the Hahn-Banach theorem to the space $L^1(\Omega,|\mu|_\U;\U^\di)$ of functions from $\Omega$ to $\U^\di$ that are Bochner integrable with respect to the measure $|\mu|_\U$. While such an extension would also be possible in our setting, our simpler extension suffices as it immediately preserves the bound in \eqref{eq:bound_g_braket_mu}, which is crucial for establishing the continuity of the pairing $\braket{\cdot}{\cdot}_\B$.
\end{remark}
With the above definition of $\braket{\cdot}{\cdot}_{C_0(\Omega;\U^\di)}$, we can now define integral and neural vv-RKBS pairs.

\begin{definition}[Vector-valued integral and neural RKBS]
Given locally-compact Hausdorff $X,\Omega$, $\phi \in C_0(X \times \Omega)$, and a continuous dual pair $(\U, \U^\di)$ of Banach spaces, for $\U$-valued measure $\mu \in \M(\Omega; \U)$ and $\U^\di$-valued measure $\rho \in \M(X;\U^\di)$ define 
\begin{align}
    (A_{\Omega\to X}\mu)(x)\coloneqq \int_\Omega \phi(x,w)d\mu(w), \label{eq:vv_integral_RKBS_f} \\
    (A_{X\to \Omega}\rho)(w) \coloneqq\int_X\phi(x,w)d\rho(x). \label{eq:vv_integral_RKBS_g} 
\end{align}
The adjoint pair of vv-RKBS $(\B, \B^\di)$ defined by
\begin{subequations}
\begin{align}
    \B \coloneqq \{ f =A_{\Omega \to X}\mu \mid \mu \in \M(\Omega; \U) \} , & \quad \norm{f}_\B \coloneqq \inf_{f = A_{\Omega \to X}\mu} |\mu|_\U(\Omega), \\
    \B^\di \coloneqq \overline{\B^\di_{\text{pre}}} \coloneqq \overline{\{ g =A_{X \to \Omega}\rho \mid \rho \in \M(X; \U^\di) \}}, &  \quad\norm{g}_{\B^\di} \coloneqq \sup_{w \in \Omega} \norm{g(w)}_{\U^\di},
\end{align}
\end{subequations}
with the pairing 
\begin{equation}
    \braket{g}{f}_{\B} = \braket{\mu}{g}_{C_0(\Omega; \U^\di)} \stackrel{g\in\B^\di_{\text{pre}}}= \int_{X \times \Omega} \phi(x,w) d \braket{\rho}{\mu}_\U(x,w),
\end{equation}
where $\mu\in \M(\Omega;\U)$ is any representative of $f\in \B$, $\rho\in \M(X;\U^\di)$ is any representative of $g\in \B_{\text{pre}}^\di$ and $\braket{\rho}{\mu}_\U$ denotes the Hahn-Kolmogorov extension of the scalar-valued measure satisfying $\braket{\rho}{\mu}_\U(E\times F) = \braket{\rho(E)}{\mu(F)}_\U$ for any Borel subsets $E \subset X, F \subset \Omega$, is called an integral vv-RKBS pair, and we refer to $\B$ as an integral RKBS.
Its kernel $K \colon X \times \Omega \to \twin(\U, \U^\di)$ satisfies
\begin{equation}
\begin{split}
    K_\U(x,w)u \coloneqq \phi(x,w) u, & \qquad K_{\U^\di}(x,w)u^\di \coloneqq \phi(x,w) u^\di \\
    K(x,w)(u^\di, u) &= \phi(x,w) \braket{u^\di}{u}_\U.
\end{split}    
\end{equation}
If, in addition, $(\V, \V^\di)$ is a dual pair of normed vector spaces with $X \subseteq \mathcal V$ compact, $\widetilde{\Omega} \subseteq \mathcal V^\di$ locally-compact Hausdorff, $\Omega \coloneqq \widetilde{\Omega} \times \mathbb R$, and 
\begin{equation}
    \phi \in C_0(X \times \Omega),\quad \phi(x,(\omega,b)) \coloneqq \sigma(\braket{\omega}{x}_\V + b) \beta((\omega,b)),
\end{equation}
for a measurable activation $\sigma \colon \mathbb R \to \mathbb R$ and a measurable positive $\beta \colon \Omega \to \mathbb R$, then $(\mathcal B, \mathcal B^\di)$ is called a neural vv-RKBS pair.
\label{def:vector_integral_RKBS}
\end{definition}

\begin{remark}[Motivating the $\phi \in C_0(X \times \Omega)$ assumption]
    Although all the operations are defined for bounded measurable $\phi$, the definition uses the standard $\phi \in C_0(X \times \Omega)$ assumption, which already covers many cases. In particular, for $\R^d$-valued neural networks, $X \subseteq \R^{d_x}$ can be taken compact, since real-world inputs are always bounded. With $\Omega \subseteq \R^{d_\Omega}$ and bounded weights (e.g. through $\beta$), this ensures $\phi \in C_0(X \times \Omega)$.
    
    In neural operators, weights also lie in $\Omega \subseteq \R^{d_\Omega}$, but inputs are theoretically infinite-dimensional functions, making compactness a strong assumption. The finite-dimensional manifold hypothesis, however, suggests that data effectively lies on a finite-dimensional manifold. Moreover, in practice, inputs are given as meshes or point clouds. Many methods first project them to finite-dimensional (function) spaces \cite{raonic2023convolutional, dummer2026ronom, batlle2024kernel, bhattacharya2021model} or latent spaces \cite{seidman2022nomad, kontolati2023learning}. In all presented cases, the inputs or their projections lie in a finite-dimensional space, where compactness assumptions are more natural. Finally, in supervised learning, we consider only a finite number of training data points $x_n \in X$, which form a compact set, and therefore also make $\phi \in C_0(X \times \Omega)$ natural.

    Given this and the simplification it provides for analysis, we retain the $\phi \in C_0(X\times \Omega)$ assumption.
\end{remark}

\begin{remark}
Since Definition \ref{def:vector_integral_RKBS} is based on the real-valued function $\phi$ and leads to the kernel $\phi(x,w) \braket{u^\di}{u}_\U$, it generalizes the Hilbert space construction in \eqref{eq:vv-RKHS-from-scalar}.
\end{remark}

\begin{remark}
    The Hahn–Kolmogorov extension theorem (see Theorem 1.7.8 in Tao \cite{tao2011introduction}) is usually stated for nonnegative pre-measures, whereas in Definition \ref{def:vector_integral_RKBS} we are dealing with a signed set function. Moreover, for the pairing $\braket{\cdot}_\B$ to be well-defined, the extension $\braket{\rho}{\mu}_\U$ must be of bounded variation, which is not addressed by the Hahn–Kolmogorov theorem. To ensure that the extension is both well-defined and of bounded total variation, we establish this result in Theorem \ref{thm:existence_braket_measure}.
\end{remark}

Similar to the treatment of the scalar-valued case in \cite{spek2025duality}, it remains to show that the vector-valued pairing is independent of the choice of representatives $\mu$ and $\rho$ for $f \in \B$ and $g \in \B_{\mathrm{pre}}^\di$, respectively. We now prove that the pairing $\braket{\cdot}{\cdot}_\B$ in Definition~\ref{def:adjoint_pair_vector_RKBS} is well-defined.
\begin{theorem}[Pairing of integral vv-RKBS is well-defined]
    Let $(\U, \U^\di)$ be a dual pair of Banach spaces with a continuous pairing. Let $(\B, \B^\di)$ be a vector-valued integral RKBS pair, where functions in $\B$ map from a set $X$ to $\U$ and functions in $\B^\di$ map from a set $\Omega$ to $\U^\di$. For any $\mu \in \M(\Omega; \U)$ and $\rho \in \M(X; \U^\di)$, if $f = A_{\Omega \to X}\mu$ and $g = A_{X \to \Omega} \rho$, then
    \begin{equation}
        \braket{g}{f}_\B = \braket{\mu}{g}_{C_0(\Omega;\U^\di)} = \braket{\rho}{f}_{C_0(X;\U)}
    \end{equation}
    In particular, the pairing $\braket{\cdot}{\cdot}_\B$ is independent of the particular representations $\mu$ and $\rho$ of $f$ and $g$, respectively. Moreover, by continuity of the pairing on $C_0(\Omega;\U^\di)$, this independence extends to all $g\in\B^\di$: if $g_k=A_{X\to\Omega}\rho_k\to g$, then
    \begin{equation}
        \braket{g}{f}_\B
        = \braket{\mu}{g}_{C_0(\Omega;\U^\di)}
        = \lim_{k\to\infty}\braket{\mu}{g_k}_{C_0(\Omega;\U^\di)}
        = \lim_{k\to\infty}\braket{\rho_k}{f}_{C_0(X;\U)},
    \end{equation}
    which is independent of the choice of $\mu$.
    \label{thm:vv-RKBS_pairing_is_well-defined}
\end{theorem}

\begin{proof}
    As $\phi \in C_0(X \times \Omega)$, Theorem~\ref{thm:simple_approximation_of_C0_functions} guarantees that for any $\varepsilon > 0$, there exist:
    \begin{itemize}
        \item pairwise disjoint Borel sets $A_1, \dots, A_n \subseteq X$ with $\bigcup_{i=1}^n A_i = X$,
        \item pairwise disjoint Borel sets $B_1, \dots, B_m \subseteq \Omega$ with $\bigcup_{j=1}^m B_j = \Omega$,
    \end{itemize}
    and a simple function of the form
    \begin{equation}
    \phi_\varepsilon(x, w) = \sum_{i=1}^n \sum_{j=1}^m a_{ij} \mathbbm{1}_{A_i \times B_j}(x, w)
    \end{equation}
    such that
    \begin{equation}
    \sup_{(x,w) \in X \times \Omega} \left| \phi_\varepsilon(x, w) - \phi(x, w) \right| \leq \varepsilon.
    \end{equation}
    
    By uniform convergence, we have:
    \begin{equation}
    \braket{g}{f}_\B = \int_{X \times \Omega} \phi(x, w) \, d\braket{\rho}{\mu}_\U(x,w) = \lim_{\varepsilon \to 0} \int_{X \times \Omega} \phi_\varepsilon(x, w) \, d\braket{\rho}{\mu}_\U(x,w).
    \end{equation}
    The integral in the limit can be written equivalently as
    \begin{equation}
    \begin{split}
    \int_{X \times \Omega} \phi_\varepsilon(x, w) \, d\braket{\rho}{\mu}_\U(x,w) 
    &= \sum_{i=1}^n \sum_{j=1}^m a_{ij} \braket{\rho(A_i)}{\mu(B_j)}_\U \\
    &= \sum_{j=1}^m \braket{\sum_{i=1}^n a_{ij} \rho(A_i)}{\mu(B_j)}_\U = \braket{\mu}{g_\varepsilon}_{C_0(\Omega;\U^\di)}, 
    \end{split}\label{eq:pairing_simple_function}
    \end{equation}
    by using the simple function
    \begin{equation}
    g_\varepsilon(w) \coloneqq \sum_{j=1}^m \left( \sum_{i=1}^n a_{ij} \rho(A_i) \right) \mathbbm{1}_{B_j}(w).
    \end{equation}
    Hence,
    \begin{equation}
    \braket{g}{f}_\B = \lim_{\varepsilon \to 0} \braket{\mu}{g_\varepsilon}_{C_0(\Omega;\U^\di)}.
    \end{equation}

    We will now show that $\lim_{\varepsilon \to 0} \braket{\mu}{g_\varepsilon}_{C_0(\Omega; \U^\di)} = \braket{\mu}{g}_{C_0(\Omega; \U^\di)}$.  
    For fixed $\mu$, the map $g \mapsto \braket{\mu}{g}_{C_0(\Omega; \U^\di)}$ is bounded on the space of functions that can be expressed as uniform limits of simple functions, where this space is equipped with the supremum norm. Hence, it suffices to prove that $g_\varepsilon \to g$ uniformly.

    To see this, fix $w \in B_j$. Because the $A_i$ are pairwise disjoint and cover $X$, we have:
    \begin{equation}
    \begin{split}
    \norm{g_\varepsilon(w) - g(w)}_{\U^\di} 
    &= \left\| \sum_{i=1}^n a_{ij} \rho(A_i) - \int_X \phi(x, w) \, d\rho(x) \right\|_{\U^\di} \\
    &= \left\| \sum_{i=1}^n \int_{A_i} (a_{ij} - \phi(x, w)) \, d\rho(x) \right\|_{\U^\di} \\
    &\leq \sum_{i=1}^n \left\| \int_{A_i} (a_{ij} - \phi(x, w)) \, d\rho(x) \right\|_{\U^\di} \\
    &\leq \sum_{i=1}^n \sup_{x \in A_i} |a_{ij} - \phi(x, w)| \cdot |\rho|_{\U^\di}(A_i) \\
    &= \sum_{i=1}^n \sup_{x \in A_i} |\phi_\varepsilon(x, w) - \phi(x, w)| \cdot |\rho|_{\U^\di}(A_i) \\
    & \leq \varepsilon \sum_{i=1}^n |\rho|_{\U^\di}(A_i) = \varepsilon |\rho|_{\U^\di}(X).
    \end{split}        
    \end{equation}
    Since $|\rho|_{\U^\di}$ has finite total variation and is independent of $\varepsilon$, this shows:
    \begin{equation}
        \sup_{w \in \Omega} \norm{g_\varepsilon(w) - g(w)}_{\U^\di} \to 0 \quad \text{as } \varepsilon \to 0.
    \end{equation}
    Hence,
    \begin{equation}
        \lim_{\varepsilon \to 0} \braket{\mu}{g_\varepsilon}_{C_0(\Omega;\U^\di)} = \braket{\mu}{g}_{C_0(\Omega;\U^\di)}.
    \end{equation}
    Combining this with \eqref{eq:pairing_simple_function}, we conclude:
    \begin{equation}
    \braket{g}{f}_\B = \braket{\mu}{g}_{C_0(\Omega;\U^\di)}.
    \end{equation}    
    A completely analogous argument, interchanging the roles of $\rho$ and $\mu$, shows that
    \begin{equation}
    \braket{g}{f}_\B = \braket{\rho}{f}_{C_0(X;\U)}.\qedhere
    \end{equation}
\end{proof}

\subsection{Properties of integral vv-RKBS}
Having established that all components in the definition of the integral vector-valued RKBS pair are well-defined, the question remains whether all the requirements for an adjoint pair of vector-valued RKBS are satisfied. The following theorem confirms this, and in particular shows that the pairing $\braket{\cdot}{\cdot}_{\B}$ inherits the continuity bound from the pairing $\braket{\cdot}{\cdot}_{\U}$.

\begin{theorem}[Integral vv-RKBS pair is adjoint pair of vv-RKBS]\label{thm:integral_RKBS_is_vv-RKBS}
    The integral vv-RKBS pair $(\B, \B^\di)$ is an adjoint pair of vv-RKBS. In particular, if $C > 0$ is the smallest constant such that the continuous pairing $\braket{\cdot}{\cdot}_\U$ satisfies  $\left|\braket{u^\di}{u}_\U\right| \leq C \norm{u^\di}_{\U^\di} \norm{u}_\U$ for all $(u, u^\di) \in \U \times \U^\di$, then the pairing $\braket{\cdot}{\cdot}_\B$ is continuous with the bound
    \begin{equation}
        \left|\braket{g}{f}_\B \right|\leq C \norm{g}_{\B^\di}\norm{f}_\B
    \end{equation}
    for all $(f,g) \in \B \times \B^\di$.
\end{theorem}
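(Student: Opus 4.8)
The plan is to verify the three defining ingredients of an adjoint pair in Definition~\ref{def:adjoint_pair_vector_RKBS}: that $\B$ and $\B^\di$ are each vv-RKBSs, that $\braket{\cdot}{\cdot}_\B$ is a continuous dual pairing obeying the stated bound, and that $K$ reproduces on both sides. Because Theorem~\ref{thm:vv-RKBS_pairing_is_well-defined} already supplies well-definedness together with the two integral representations of the pairing, and the bound~\eqref{eq:bound_g_braket_mu} is already established, the proof is mostly a matter of assembling these pieces with point-mass representatives.

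I would first settle the bounded point evaluations. For $\B^\di$ the constant $C_w = 1$ works immediately, since $\norm{g(w)}_{\U^\di} \leq \sup_{w'}\norm{g(w')}_{\U^\di} = \norm{g}_{\B^\di}$. For $\B$, starting from $f = A_{\Omega\to X}\mu$ and~\eqref{eq:vv_integral_RKBS_f}, I would estimate $\norm{f(x)}_\U \leq \sup_w |\phi(x,w)|\,|\mu|_\U(\Omega)$ and take the infimum over representatives to obtain $\norm{f(x)}_\U \leq C_x\norm{f}_\B$ with $C_x = \sup_w|\phi(x,w)| < \infty$, finite because $\phi \in C_0(X\times\Omega)$. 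A one-line computation also confirms that each $K(x,w)$ is a genuine twin operator with associated maps $K_\U(x,w) = \phi(x,w)\,\mathrm{Id}_\U$ and $K_{\U^\di}(x,w) = \phi(x,w)\,\mathrm{Id}_{\U^\di}$ and norm $\norm{K(x,w)}_{\twin(\U,\U^\di)} \leq C|\phi(x,w)|$, so indeed $K \colon X\times\Omega \to \twin(\U,\U^\di)$.

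For the continuity bound (the ``in particular'' statement), I would use Theorem~\ref{thm:vv-RKBS_pairing_is_well-defined} to write $\braket{g}{f}_\B = \braket{\mu}{g}_{C_0(\Omega;\U^\di)}$ for an arbitrary representative $\mu$ of $f$, apply~\eqref{eq:bound_g_braket_mu} in the form $|\braket{\mu}{g}_{C_0(\Omega;\U^\di)}| \leq C\big(\sup_w\norm{g(w)}_{\U^\di}\big)|\mu|_\U(\Omega) = C\norm{g}_{\B^\di}|\mu|_\U(\Omega)$, and take the infimum over $\mu$ to reach $|\braket{g}{f}_\B| \leq C\norm{g}_{\B^\di}\norm{f}_\B$. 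Non-degeneracy I would not check directly but defer to the reproducing properties, exactly as in the remark following Definition~\ref{def:adjoint_pair_scalar_RKBS}: choosing $g = K_{\U^\di}(x,\cdot)u^\di$ forces any $f$ annihilated by the pairing to satisfy $\braket{u^\di}{f(x)}_\U = 0$ for all $(x,u^\di)$, hence $f \equiv 0$ by non-degeneracy of $(\U,\U^\di)$, and the symmetric choice handles the other condition.

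The reproducing identities are the crux. I would represent the kernel sections by vector point masses, $K_{\U^\di}(x,\cdot)u^\di = \phi(x,\cdot)u^\di = A_{X\to\Omega}(u^\di\delta_x)$ and $K_\U(\cdot,w)u = \phi(\cdot,w)u = A_{\Omega\to X}(u\,\delta_w)$, where $u^\di\delta_x \in \M(X;\U^\di)$ and $u\,\delta_w \in \M(\Omega;\U)$ have finite variation $\norm{u^\di}_{\U^\di}$ and $\norm{u}_\U$; this establishes membership in $\B^\di$ and $\B$. Applying Theorem~\ref{thm:vv-RKBS_pairing_is_well-defined} with these representatives reduces the two pairings to $\braket{u^\di\delta_x}{f}_{C_0(X;\U)}$ and $\braket{u\,\delta_w}{g}_{C_0(\Omega;\U^\di)}$, and the step I expect to be the main obstacle is evaluating these integral operators at point masses to recover $\braket{u^\di}{f(x)}_\U$ and $\braket{g(w)}{u}_\U$. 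The simple-function approximation needed here is already inherited from the proof of Theorem~\ref{thm:vv-RKBS_pairing_is_well-defined}, so I would take simple $f_k \to f$ uniformly, observe that against a point mass the operator collapses to $\braket{u^\di\delta_x}{f_k}_{C_0(X;\U)} = \braket{u^\di}{f_k(x)}_\U$, and pass to the limit using continuity of the operator together with $f_k(x)\to f(x)$ and continuity of $\braket{u^\di}{\cdot}_\U$. The symmetric computation yields the second identity, completing the verification that $(\B,\B^\di)$ is an adjoint pair.
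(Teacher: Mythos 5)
Your proposal is correct and follows essentially the same route as the paper's proof: the same four checks from Definition~\ref{def:adjoint_pair_vector_RKBS} (bounded point evaluations with $C_x = \sup_{w}|\phi(x,w)|$ and $C_w = 1$, the reproducing identities via the point-mass representatives $u^\di\delta_x$ and $u\,\delta_w$, non-degeneracy deferred to the reproducing properties, and continuity by applying \eqref{eq:bound_g_braket_mu} and taking the infimum over representatives $\mu$). The only variation is in the reproducing identities, where you route through Theorem~\ref{thm:vv-RKBS_pairing_is_well-defined}'s representation $\braket{g}{f}_\B = \braket{u^\di\delta_x}{f}_{C_0(X;\U)}$ and evaluate at the point mass by a uniform-limit-of-simple-functions argument, whereas the paper computes directly, $\braket{u^\di}{f(x)}_\U = \int_\Omega \phi(x,w)\,d\braket{u^\di}{\mu}_\U(w) = \int_{X\times\Omega}\phi(y,w)\,d\braket{u^\di\delta_x}{\mu}_\U(y,w)$; both rest on the same ingredients and your reliance on $f$ being a uniform limit of simple functions is indeed justified by (the symmetric half of) the proof of Theorem~\ref{thm:vv-RKBS_pairing_is_well-defined}.
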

\begin{proof}
    To prove that the integral vector-valued RKBS pair $(\B, \B^\di)$ forms an adjoint pair of vv-RKBSs, we verify the following points:

    \underline{\textbf{$\B$ and $\B^\di$ are vv-RKBS:}} \\
    The space $\B^\di$ is a vv-RKBS by construction, as it uses the supremum norm. For $\B$, consider an arbitrary $x\in X$. For any $f = A_{\Omega \to X} \mu$ with $\mu \in \M(\Omega; \U)$, 
    \begin{equation}
        \norm{f(x)}_\U = \norm{\int_\Omega \phi(x,w) d\mu(w)}_\U \leq \sup_{w \in \Omega}\left|  \phi(x,w) \right| |\mu|_\U(\Omega),
    \end{equation}
    where $\sup_{w\in \Omega}\left| \phi(x,w) \right| < \infty$ due to $\phi \in C_0(X \times \Omega)$. Taking the infimum over all $\mu$ such that $f = A_{\Omega \to X} \mu$, we obtain
    \begin{equation}
        \norm{f(x)}_\U \leq \sup_{w\in \Omega}\left| \phi(x,w) \right| \norm{f}_\B
    \end{equation}
    Since $x$ was arbitrary, all the point evaluations are continuous. Hence, $\B$ is a vv-RKBS.

    \underline{\textbf{\textit{The pairing is continuous}}}: \\
    This follows immediately from \eqref{eq:bound_g_braket_mu} when taking the infimum over all  $\mu$ such that $A_{\Omega \to X}\mu = f$.
    
    \underline{\textbf{$K$ satisfies the reproducing properties:}} \\
    Let $f = A_{\Omega \to X} \mu \in \B$. For any $x \in X$ and $u^\di \in \U^\di$, we have
    \begin{equation}
        \braket{u^\di}{f(x)}_\U = \int_\Omega \phi(x, w) d\braket{u^\di}{\mu}_\U (w) = \int_{X\times \Omega} \phi(y,w) d \braket{u^\di \delta_x}{\mu}_\U (y, w) = \braket{\phi(x,\cdot)u^\di}{f}_\B,
    \end{equation}
    which verifies the reproducing property for $K_{\U^\di}$. The reproducing property for $K_{\U}$ follows by a similar argument and taking a limit of $g_k \in \B_{\text{pre}}^\di \to g \in \B^\di$:
    \begin{equation}
        \braket{g(w)}{u}_\U = \lim_{k \rightarrow \infty}\braket{g_k(w)}{u}_\U = \lim_{k \rightarrow \infty}\braket{g_k}{\phi(\cdot,w)u}_\B = \braket{g}{\phi(\cdot,w)u}_\B,
    \end{equation}
    where the first equality follows by supremum norm convergence of $g_k$ to $g$ and continuity of the $(\U, \U^\di)$ pairing and the last equality follows by continuity of the $(\B, \B^\di)$ pairing. Furthermore, by the definition of a twin operator:
    \begin{equation}
        K(x,w)(u^\di, u) = \braket{u^\di}{K_\U(x,w) u}_\U = \braket{u^\di}{\phi(x,w) u}_\U=\phi(x,w)\braket{u^\di}{u}_\U 
    \end{equation}

    \underline{\textbf{\textit{The pairing is non-degenerate}}}: \\
    Assume $g \in \B^\di$ is not the zero function. Then there exists a $w \in \Omega$ such that $g(w) \neq 0$. Since \((\U, \U^\di)\) is a dual pair, the non-degeneracy of \(\braket{\cdot}{\cdot}_\U\) implies that there exists a \(u \in \U\) such that \(\braket{g(w)}{u}_\U \neq 0\). Using the reproducing property,
    \begin{equation}
        0 \neq \braket{g(w)}{u}_\U = \braket{g}{K_\U(\cdot, w) u}_\B.
    \end{equation}
    Hence, there exists an $f \in \B$ with $\braket{g}{f}_\B \neq 0$, proving non-degeneracy in the second argument. A similar argument with the roles of $f$ and $g$ reversed shows the non-degeneracy in the first argument. 
\end{proof}

The previous theorem demonstrates that the integral vv-RKBS pair forms an adjoint pair of vv-RKBS when using Definition \ref{def:adjoint_pair_vector_RKBS}. As highlighted in Theorem \ref{thm:equivalence_adjoint_banach_space_defs}, this definition is the weakest one possible. The following theorem establishes that, under limited assumptions on the output dual pair $(\U, \U^\di)$, the integral vector-valued RKBS pair satisfies additional properties that correspond to the $\delta$-dual definition.

\begin{theorem}[$\B^\di$ in the integral RKBS is a $\delta$-dual]\label{thm:B_diamond_in_integral_RKBS_is_delta_dual}
    Let $(\U, \U^\di)$ be a dual pair with continuous pairing and let $(\mathcal{B}, \mathcal{B}^\diamond)$ be a vector-valued integral RKBS pair, where functions in $\mathcal{B}$ map from a set $X$ to $\mathcal{U}$ and functions in $\mathcal{B}^\diamond$ map from a set $\Omega$ to $\mathcal{U}^\diamond$.  
    
    Then the set 
    \begin{equation}
    \spn\left\{ K_{\mathcal{U}^\diamond}(x, \cdot) u^\diamond\mid (x, u^\diamond) \in X \times \mathcal{U}^\diamond \right\} = \spn\left\{ \phi(x, \cdot) u^\diamond \mid (x, u^\diamond) \in X \times \mathcal{U}^\diamond \right\}
    \end{equation}
    is dense in $\mathcal{B}^\diamond$.  
    
    Furthermore, if 
    \begin{equation}
        \norm{u^\diamond}_{\mathcal{U}^\diamond} = \sup_{\norm{u}_{\U} \leq 1} \left| \braket{u^\diamond}{u}_{\U} \right|,
        \label{eq:dual_sup_norm_assumption_U}
    \end{equation}
    then 
    \begin{equation}
        \norm{g}_{\mathcal{B}^\diamond} = \sup_{\norm{f}_{\B} \leq 1} \left| \braket{g}{f}_{\B} \right|.
        \label{eq:dual_sup_norm_consequence_B}
    \end{equation}    
\end{theorem}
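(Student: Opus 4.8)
The plan is to treat the two assertions separately, exploiting the identity $\phi(x,\cdot)u^\di = A_{X\to\Omega}(u^\di\delta_x)$, so that $\spn\set{\phi(x,\cdot)u^\di \given (x,u^\di)\in X\times\U^\di}$ coincides with the image under $A_{X\to\Omega}$ of the finitely supported $\U^\di$-valued measures. Density then reduces to showing that every $g=A_{X\to\Omega}\rho$ with $\rho\in\M(X;\U^\di)$ can be approximated in the $\B^\di$-norm by $A_{X\to\Omega}\rho'$ for some finitely supported $\rho'$.

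For this approximation I would invoke the simple-function approximation of $\phi$ (Theorem~\ref{thm:simple_approximation_of_C0_functions}): for $\varepsilon>0$ there are partitions $\set{A_i}$ of $X$ and $\set{B_j}$ of $\Omega$ and constants $a_{ij}$ with $\abs{\phi(x,w)-a_{ij}}\le\varepsilon$ on $A_i\times B_j$. Choosing representatives $x_i\in A_i$ and setting $\rho'\coloneqq\sum_i \rho(A_i)\delta_{x_i}$, a finite combination of point masses that lies in the span, I would write $g(w)-(A_{X\to\Omega}\rho')(w)=\sum_i\int_{A_i}(\phi(x,w)-\phi(x_i,w))\,d\rho(x)$ and bound its $\U^\di$-norm. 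Since $x,x_i\in A_i$ and any $w$ lies in some $B_j$, the triangle inequality gives $\abs{\phi(x,w)-\phi(x_i,w)}\le 2\varepsilon$, so that $\norm{g(w)-(A_{X\to\Omega}\rho')(w)}_{\U^\di}\le 2\varepsilon\,\abs{\rho}_{\U^\di}(X)$. The decisive point is that this bound is uniform in $w$, hence it controls the supremum norm $\norm{\cdot}_{\B^\di}$; letting $\varepsilon\to0$ yields density.

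For the norm identity I would prove the two inequalities. The bound $\sup_{\norm{f}_\B\le1}\abs{\braket{g}{f}_\B}\le\norm{g}_{\B^\di}$ follows from the continuity of the pairing (Theorem~\ref{thm:integral_RKBS_is_vv-RKBS}), because the norming assumption \eqref{eq:dual_sup_norm_assumption_U} forces the optimal continuity constant to satisfy $C\le1$. For the reverse inequality I would, given $\delta>0$, pick $w_\delta$ with $\norm{g(w_\delta)}_{\U^\di}\ge\norm{g}_{\B^\di}-\delta$ and then use \eqref{eq:dual_sup_norm_assumption_U} to select $u\in\U$ with $\norm{u}_\U\le1$ and $\abs{\braket{g(w_\delta)}{u}_\U}\ge\norm{g(w_\delta)}_{\U^\di}-\delta$. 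Taking $f\coloneqq K_\U(\cdot,w_\delta)u=\phi(\cdot,w_\delta)u=A_{\Omega\to X}(u\delta_{w_\delta})$, the defining infimum bound on $\norm{\cdot}_\B$ gives $\norm{f}_\B\le\norm{u}_\U\le1$, while the reproducing property yields $\braket{g}{f}_\B=\braket{g(w_\delta)}{u}_\U$. Hence $\sup_{\norm{f}_\B\le1}\abs{\braket{g}{f}_\B}\ge\norm{g}_{\B^\di}-2\delta$, and letting $\delta\to0$ completes the identity.

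I expect the main obstacle to be the uniform-in-$w$ control needed in the density argument: because $\norm{\cdot}_{\B^\di}$ is a supremum over $\Omega$, a merely pointwise approximation of $g$ is insufficient, and one must ensure the discretization error does not degrade as $w$ varies. This is precisely what the joint simple-function approximation of $\phi$ on $X\times\Omega$ supplies, so the crux is to route the estimate through that approximation rather than through continuity in $x$ alone. The norm identity is then comparatively routine, with the only subtlety being that the norming hypothesis \eqref{eq:dual_sup_norm_assumption_U} is exactly what allows the supremum $\sup_{w}\norm{g(w)}_{\U^\di}$ to be realized by genuine unit-norm test functions $f=\phi(\cdot,w)u$.
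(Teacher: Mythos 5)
Your proposal is correct and takes essentially the same approach as the paper: for density you build the same approximant $\sum_i \phi(x_i,\cdot)\rho(A_i)$ from a joint partition of $X\times\Omega$ (routing the oscillation bound through Theorem~\ref{thm:simple_approximation_of_C0_functions} rather than Lemma~\ref{lemma:C0_domain_decomposition}, which only costs a harmless factor of $2$), and you prove the norm identity by the same two inequalities, namely continuity of the pairing with constant at most $1$ and testing against $f=\phi(\cdot,w)u=A_{\Omega\to X}(u\delta_w)$ with $\norm{f}_\B\leq 1$ via the reproducing property. The only cosmetic difference is that you use approximate maximizers $w_\delta$ where the paper uses the attained maximum $w_{\max}$ of $g\in C_0(\Omega;\U^\di)$; both are valid.
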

\begin{proof}
    To prove the density, we apply Lemma \ref{lemma:C0_domain_decomposition} to obtain pairwise disjoint sets $A_1, \dots, A_n \subseteq X$ with $\bigcup_{i=1}^n A_i = X$, pairwise disjoint sets $B_1, \dots, B_m \subseteq \Omega$ with $\bigcup_{j=1}^m B_j = \Omega$, and a Borel set $D_{\frac{\varepsilon}{2}} \subseteq X \times \Omega$, such that for each $(i, j)$ either:
    \begin{itemize}
        \item $A_i \times B_j \subseteq D_{\frac{\varepsilon}{2}}$ and $|\phi(x,w) - \phi(\widetilde{x}, \widetilde{w})| \leq \epsilon$ for all $(x,w), (\widetilde{x}, \widetilde{w}) \in A_i \times B_j$,
        \item $A_i \times B_j \subseteq \left( X \times \Omega \right) \setminus D_{\frac{\varepsilon}{2}}$ and $|\phi(x,w)| \leq \frac{\varepsilon}{2}$ for all $(x,w) \in A_i \times B_j$, which implies by the triangle inequality that $|\phi(x,w) - \phi(\widetilde{x}, \widetilde{w})| \leq \epsilon$ for all $(x,w), (\widetilde{x}, \widetilde{w}) \in A_i \times B_j$. 
    \end{itemize}
    In particular, for an arbitrary $x_i \in A_i$, we have
    \begin{equation}
        \sup_{x \in A_i} |\phi(x_i,w) - \phi(x, w)| \leq \sup_{x \in A_i, w \in \Omega} |\phi(x_i,w) - \phi(x, w)| = \max_j\left(\sup_{x \in A_i, w \in B_j} |\phi(x_i,w) - \phi(x, w)|\right) \leq \varepsilon
    \end{equation}
    where the equality follows from $\bigcup_{j=1}^m B_j = \Omega$. 
    
    Let $g$ be an arbitrary element of $\B^\di_{\text{pre}}$. For arbitrary $x_i \in A_i$, define $g_\varepsilon$ as:
    \begin{equation}
        g_\varepsilon = \sum_{i=1}^n \phi(x_i, \cdot) \rho(A_i) \in \spn\set{\phi(x, \cdot) u^\di \given (x, u^\di) \in X \times \U^\di}
    \end{equation}
    Then, for every $w \in \Omega$,
    \begin{equation}
    \begin{split}
        \norm{g_\varepsilon(w) - g(w)}_{\U^\di} & = \norm{\sum_{i=1}^n \int_{A_i} (\phi(x_i, w) - \phi(x,w)) d\rho(x)}_{\U^\di} \\
        & \leq \sum_{i=1}^n  \sup_{x \in A_i} |\phi(x_i, w) - \phi(x,w)| |\rho|_{\U^\di}(A_i) \\
        & \leq \varepsilon \sum_{i=1}^n |\rho|_{\U^\di}(A_i) = \varepsilon |\rho|_{\U^\di}(X)
    \end{split}
    \end{equation}
    Hence,
    \begin{equation}
        \norm{g_\varepsilon - g}_{\B^\di} = \sup_{w \in \Omega} \norm{g_\varepsilon(w) -g(w)}_{\U^\di} \leq \varepsilon |\rho|_{\U^\di}(X) \xrightarrow{\varepsilon \to 0} 0
    \end{equation}
    which shows that $\spn\set{\phi(x, \cdot) u^\di \given (x, u^\di) \in X \times \U^\di} = \spn
    \set{K_{\U^\di}(x, \cdot)u^\di \given (x, u^\di) \in X \times \U^\di}$ is dense in $\B^\di_{\text{pre}}$ and thus also in $\B^\di$.

    To prove \eqref{eq:dual_sup_norm_consequence_B} given \eqref{eq:dual_sup_norm_assumption_U}, observe that every $g\in \B^\di\subset C_0(\Omega;\U^\di)$ attains the supremum in its norm at some point $w_{\text{max}}$. Hence,
    \begin{equation}
        \norm{g}_{\B^\di} = \norm{g(w_{\text{max}})}_{\U^\di} = \sup_{\norm{u}_\U \leq 1} \left| \braket{g(w_{\text{max}})}{u}_\U \right| = \sup_{\norm{u}_\U \leq 1} \left| \braket{g}{K_\U(\cdot, w_{\text{max}}) u}_\B \right| \leq \sup_{\norm{f}_\B \leq 1} \left| \braket{g}{f}_\B \right|
    \end{equation}
    where the last inequality holds since
    \begin{equation}
        \norm{K_\U(\cdot, w_{\text{max}}) u}_\B = \norm{\phi(\cdot, w_{\text{max}})u}_\B \leq |\delta_{w_{\text{max}}}|(\Omega)\norm{u}_\U \leq 1,
    \end{equation}
    with the final inequality following from the assumption $\norm{u}_\U \leq 1$. 
    
    Moreover, if \eqref{eq:dual_sup_norm_assumption_U} holds, then $C=1$ in Theorem \ref{thm:integral_RKBS_is_vv-RKBS}. Combining this with the above yields
    \begin{equation}
        \norm{g}_{\B^\di} \leq \sup_{\norm{f}_\B \leq 1} \left| \braket{g}{f}_\B \right| \leq \sup_{\norm{f}_\B \leq 1} \norm{g}_{\B^\di} \norm{f}_\B \leq \norm{g}_{\B^\di}. 
    \end{equation}
    Hence, \eqref{eq:dual_sup_norm_consequence_B} holds.
\end{proof}

\subsection{Representer theorem}
So far, we have introduced the integral vv-RKBS and the neural RKBS as infinite-dimensional formulations of neural networks. In practice, we aim to optimize the weights of a neural network using data. Thus, if the integral vv-RKBS is to serve as the function space for neural networks, optimizing over this space should return neural networks as solutions. Results of this type are known as representer theorems. This section establishes a general representer theorem for the integral vv-RKBS introduced in Definition \ref{def:vector_integral_RKBS}. In the next section, we demonstrate that $\R^d$-valued neural networks, hypernetworks, and DeepONets are elements of specific integral and neural vv-RKBS spaces, and utilize the general representer theorem to derive representer theorems for these models.

To set up the optimization problem required for learning from data, assume we want to learn a function from a domain $X$ to a Banach space $\U$. For $\R^d$-valued neural networks, $X \subseteq \R^{d_x}$ and $\U \subseteq \R^{d_u}$. In operator learning, $X$ may be $\R^{d_x}$ or even a function space, with the output also lying in a function space. In practice, however, we rarely have access to the full output function and instead observe only its sampled values. Concretely, every output function $u \colon D \to \R$ is sampled at $d$ points $p_i \in D$ through the measurement operator
\begin{equation}
    Mu = [u(p_1), \ldots, u(p_d)] \in \R^d.
\end{equation}
When the output function space is taken to be an RKBS, the measurement operator $M$ is a bounded linear operator from the RKBS to $\R^d$.  

In general, we assume a measurement operator $M \colon \U \to \R^d$, such as the sampling operator above or, when $\U = \R^d$, the identity map. Given data $\{(x_n, y_n)\}_{n=1}^N \subseteq X \times \R^d$, we then formulate a supervised optimization problem, for which we derive a representer theorem.

\begin{theorem}
    Assume we are given data $\{(x_n, y_n)\}_{n=1}^N \subseteq X \times \R^d$ .  
    Let $L \colon \R^d \times \R^d \to \R$ be such that for every $y \in \R^d$, the map $L(\cdot, y)$ is convex, coercive, and lower semicontinuous.  
    Let $\B$ be an integral vv-RKBS of functions from $X$ to a Banach space $\U$ with predual $\U_*$ and with weight space $\Omega$.   
    Suppose there exists a bounded linear operator $M \colon \U \to \R^d$ of the form
    \begin{equation}
    (Mu)_j = \braket{u}{v_j}_{\U_*},
    \end{equation}
    where $v_j\in \U_\ast$, $\braket{\cdot}{\cdot}_{\U_*}$ is the canonical duality pairing between ${\U_*}$ and $(\U_*)^*=\U$.  
    
    Then there exists a solution to the supervised learning problem
    \begin{equation}
        \min_{f \in \B} \; \frac{1}{N} \sum_{n=1}^N L(M(f(x_n)), y_n) + \lambda \norm{f}_\B, 
        \qquad \lambda \geq 0,
        \label{eq:sup_opt_problem}
    \end{equation}
    that admits a representation of the form
    \begin{equation}
    \mu^\dagger = \sum_{m=1}^{Nd} a_m \, \delta_{w_m} u_m,
    \qquad
    f^\dagger = A_{\Omega \to X} \mu^\dagger 
      = \sum_{m=1}^{Nd} a_m \, \phi(\cdot, w_m) u_m,
    \end{equation}
    with $u_m \in \operatorname{Ext}(\{u \in \U : \norm{u}_\U \leq 1\})$ and $a_m \in \R$.
    \label{thm:representer_theorem_integral_RKBS}
\end{theorem}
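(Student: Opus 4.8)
The plan is to transfer the problem from $\B$ to the space of vector measures $\M(\Omega;\U)$ and exploit that the data term factors through a finite-dimensional space. By Definition \ref{def:vector_integral_RKBS}, every $f \in \B$ equals $A_{\Omega\to X}\mu$ for some $\mu \in \M(\Omega;\U)$ with $\norm{f}_\B = \inf\{|\mu|_\U(\Omega) : f = A_{\Omega\to X}\mu\}$, so \eqref{eq:sup_opt_problem} is equivalent to
\begin{equation}
\min_{\mu \in \M(\Omega;\U)} \frac{1}{N}\sum_{n=1}^N L\bigl((\Psi\mu)_n, y_n\bigr) + \lambda\, |\mu|_\U(\Omega),
\end{equation}
where $(\Psi\mu)_{n,j} = \braket{(A_{\Omega\to X}\mu)(x_n)}{v_j}_{\U_*} = \int_\Omega \phi(x_n,w)\, d\braket{\mu}{v_j}_{\U_*}(w)$ and $\braket{\mu}{v_j}_{\U_*}$ is the scalar measure $E\mapsto\braket{\mu(E)}{v_j}_{\U_*}$. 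Stacking these components gives a linear map $\Psi \colon \M(\Omega;\U) \to \R^{Nd}$, and the entire loss depends on $\mu$ only through the $Nd$ numbers $\Psi\mu$. Crucially, each component functional is represented by the predual element $w \mapsto \phi(x_n,w)\,v_j$: since $\phi(x_n,\cdot)\in C_0(\Omega)$ and $v_j\in\U_*$, this product lies in $C_0(\Omega;\U_*)$, whose dual is $\M(\Omega;\U)$ by the duality \eqref{eq:duality_C0_and_RadonMeasureSpace} applied with $\U_*$ in place of $\U$. Hence $\Psi$ is weak-$*$ continuous.

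Next I would establish existence of a minimizer by the direct method. The total variation $|\cdot|_\U(\Omega)$ is a dual norm, hence weak-$*$ lower semicontinuous, and each $\mu \mapsto L((\Psi\mu)_n, y_n)$ is weak-$*$ continuous as the composition of the weak-$*$ continuous $\Psi$ with the continuous convex $L$. For $\lambda>0$ the objective is coercive in $|\mu|_\U(\Omega)$ (a convex coercive lsc $L$ is bounded below), so its sublevel sets sit inside TV-balls, which are weak-$*$ compact by Banach--Alaoglu; lower semicontinuity then yields a minimizer $\mu^\dagger$.

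The sparsification step is the heart of the argument. Set $t = |\mu^\dagger|_\U(\Omega)$ and $c = \Psi\mu^\dagger$, and consider
\begin{equation}
C = \{\mu \in \M(\Omega;\U) : \Psi\mu = c,\ |\mu|_\U(\Omega)\leq t\}.
\end{equation}
This set is convex, weak-$*$ compact, and every element of it is again a minimizer, since it shares the loss value $L(c,\cdot)$ while carrying no larger penalty. By the Krein--Milman theorem $C$ has an extreme point $\mu^\star$, and I would show it is a combination of at most $Nd$ atoms of the required form. The two ingredients are: (i) the extreme points of the unit TV-ball $\{|\mu|_\U(\Omega)\leq 1\}$ in $\M(\Omega;\U)$ are exactly the measures $\delta_w u$ with $w\in\Omega$ and $u\in\operatorname{Ext}(\{u\in\U:\norm{u}_\U\leq 1\})$; and (ii) a Carath\'eodory-type dimension count showing that an extreme point of $C$ --- the TV-ball intersected with the $Nd$ affine constraints $\Psi\mu=c$ --- must be supported on at most $Nd$ such atoms. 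Combining (i) and (ii) produces $\mu^\dagger = \sum_{m=1}^{Nd} a_m\, \delta_{w_m} u_m$, and applying $A_{\Omega\to X}$ gives the stated form of $f^\dagger$.

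The main obstacle is ingredient (i): characterizing the extreme points of the TV-ball of $\M(\Omega;\U)$ as Dirac masses weighted by extreme points of the unit ball of $\U$, in the present generality where $\U$ is an arbitrary Banach space with a predual but need not be reflexive or separable. The concentration of an extreme point onto a single atom, together with the extremality of its $\U$-value, has to be argued directly from the definition of the total variation $|\mu|_\U$ rather than by invoking separability-dependent decomposition or disintegration results. This, and the accompanying extreme-point counting for the constrained set $C$, is where the real work lies; once these are in place, the remaining steps are routine applications of convex duality and the direct method.
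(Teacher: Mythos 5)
Your proposal follows essentially the same route as the paper: the same reduction from $\B$ to $\M(\Omega;\U)$, the same identification of the data functionals with the predual elements $w\mapsto\phi(x_n,w)v_j \in C_0(\Omega;\U_*)$ to obtain weak-$*$ continuity via $C_0(\Omega;\U_*)^* \cong \M(\Omega;\U)$, and the same extreme-point mechanism for sparsity. The difference lies in what is proved versus cited. The paper does not redo the convex-geometric core: it verifies the hypotheses of Theorem 3.3 of Bredies and Carioni \cite{bredies2020sparsity}, which is precisely your Krein--Milman plus dimension-count step (ii) packaged as an abstract theorem, and it imports your ingredient (i), the characterization
\begin{equation}
\operatorname{Ext}\big(\{\mu \in \M(\Omega;\U) : |\mu|_\U(\Omega)\leq 1\}\big) = \{\delta_w u : w\in\Omega,\ u \in \operatorname{Ext}(\{u\in\U : \norm{u}_\U\leq 1\})\},
\end{equation}
from Werner \cite{werner1984extreme} (Theorem 2) and Lemma 3.2 of Bredies et al.\ \cite{bredies2024extremal}, whose proof applies without separability or reflexivity assumptions. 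Your existence argument via the direct method is a sound addition for $\lambda>0$ (the paper instead conditionally assumes a minimizer of the measure problem and lets Bredies--Carioni supply existence), and your use of continuity of $L(\cdot,y)$ is justified since a finite convex function on $\R^d$ is automatically continuous, so lower semicontinuity is not even needed there.

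The one genuine gap is the one you flag yourself: ingredient (i) is asserted, not proved, and your proposal as written does not contain the ``real work'' of showing that an extreme point of the total-variation ball concentrates on a single atom carrying an extremal $\U$-value. This is a theorem in its own right, not a routine verification, but it is exactly the known result the paper cites, so the gap is closed by citation rather than by new mathematics. A second caution: step (ii) should not be presented as a naive Carath\'eodory count; in this infinite-dimensional weak-$*$ setting, making the dimension argument rigorous (extreme points of the solution set lying in low-dimensional faces of the ball) is the actual content of Bredies and Carioni's proof. With those two references substituted for your hand-made sketches, your argument coincides with the paper's.
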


\begin{proof}
    Assume that there exists $\mu^\dagger$ with
    \begin{equation}
        \mu^\dagger \in \argmin_{\mu \in \M(\Omega; \U)} \; \frac{1}{N}\sum_{n=1}^N L\big(M((A_{\Omega \to X}\mu)(x_n)), y_n\big) + \lambda |\mu|_\U(\Omega).
        \label{eq:sup_opt_problem_mu}
    \end{equation}
    Let $f^\dagger = A_{\Omega \to X}\mu^\dagger$. For any $f = A_{\Omega \to X}\mu$ with $|\mu|_\U \leq \norm{f}_\B + \delta$, where $\delta > 0$ is arbitrary, we have
    \begin{equation}
    \begin{split}
        \frac{1}{N}\sum_{n=1}^N L(M(f^\dagger(x_n)), y_n) + \lambda \norm{f^\dagger}_\B
        &\leq \frac{1}{N}\sum_{n=1}^N L(M((A_{\Omega \to X}\mu^\dagger)(x_n)), y_n) + \lambda |\mu^\dagger|_\U(\Omega) \\
        &\leq \frac{1}{N}\sum_{n=1}^N L(M((A_{\Omega \to X}\mu)(x_n)), y_n) + \lambda |\mu|_\U(\Omega)  \\
        &\leq \frac{1}{N}\sum_{n=1}^N L(M(f(x_n)), y_n) + \lambda \norm{f}_\B + \delta.
    \end{split}
    \end{equation}
    Letting $\delta \to 0$ yields, for any $f\in \B$,
    \begin{equation}
        \frac{1}{N}\sum_{n=1}^N L(M(f^\dagger(x_n)), y_n) + \lambda \norm{f^\dagger}_\B
        \;\leq\; \frac{1}{N}\sum_{n=1}^N L(M(f(x_n)), y_n) + \lambda \norm{f}_\B.
    \end{equation}
    Hence, $f^\dagger$ is a minimizer of \eqref{eq:sup_opt_problem}. Since $f^\dagger = A_{\Omega \to X}\mu^\dagger$ with $\mu^\dagger$ a solution to \eqref{eq:sup_opt_problem_mu}, the theorem will follow once we show the existence of a sparse solution to \eqref{eq:sup_opt_problem_mu}. To this end, we apply Theorem 3.3 of Bredies and Carioni \cite{bredies2020sparsity}.
    
    By the dual space characterization \eqref{eq:duality_C0_and_RadonMeasureSpace},  
    \begin{equation}
        C_0(\Omega; {\U_*})^* \cong \M(\Omega; (\U_*)^*) \cong \M(\Omega; \U),
        \label{eq:duality_C0_predual}
    \end{equation}
    since $\U$ has ${\U_*}$ as a predual and $\Omega$ is locally compact and Hausdorff. Consequently, we can equip $\M(\Omega; \U)$ with the weak-* topology, under which it becomes a real locally-convex topological vector space.
    
    Define
    \begin{equation}
        \mathcal{A} \colon \M(\Omega; \U) \to \R^{Nd}, \quad
        \mu \mapsto \big[M((A_{\Omega \to X}\mu)(x_1)), \ldots, M((A_{\Omega \to X}\mu)(x_N))\big].
    \end{equation}
    A component of $\mathcal{A}(\mu)$ can be written as
    \begin{equation}
    \begin{split}
        M_j((A_{\Omega \to X}\mu)(x_n))
        &= \braket{(A_{\Omega \to X}\mu)(x_n)}{v_j}_{\U_*} \\
        &= \braket{\int_\Omega \phi(x_n, w)d\mu(w)}{v_j}_{\U_*} \\
        &= \int_\Omega \phi(x_n, w) \, d\braket{\mu}{v_j}_{\U_*}(w) \\
        &= \int_\Omega \phi(x_n, w) \, d\braket{v_j}{\mu}_{\U_*,\U}(w) \\
        &= \int_\Omega \braket{\phi(x_n, w) v_j}{d\mu(w)}_{\U_*,\U} \\
        &= \braket{\mu}{\phi(x_n, \cdot) v_j}_{C_0(\Omega; {\U_*})}.
    \end{split}
    \end{equation}
    Here, the last equality follows from the fact \cite{meziani2009dual} that the duality pairing that realizes the characterization in the first isomorphism of \eqref{eq:duality_C0_predual} is the continuous extension of \eqref{eq:braket_integral_defining_equation} by uniform approximation with simple functions. By definition of the weak-* topology, this expression is weak-* continuous in $\mu$. Hence $\mathcal{A}$ is weak-* continuous.  
    
    Let $H = \operatorname{range}(\mathcal{A})$ and define
    \begin{equation}
        F \colon H \to \R,\; [\widetilde{y}_1, \ldots, \widetilde{y}_N] \mapsto \frac{1}{N}\sum_{n=1}^N L(\widetilde{y}_n, y_n).
    \end{equation}
    Since $L(\cdot, y)$ is convex, coercive, and lower semicontinuous, $F$ has the same properties on $H$.  
    
    The regularizer is the total variation norm $|\mu|_\U(\Omega)$. Using again the duality \eqref{eq:duality_C0_predual}, for any $v \in {\U_*}$ with $\norm{v}_{\U_*} \leq 1$,
    \begin{equation}
        \braket{\mu_n}{v}_{C_0(\Omega; {\U_*})}
        \;\leq\; \sup_{\norm{\widetilde{v}}_{C_0(\Omega; {\U_*})} \leq 1} \braket{\mu_n}{\widetilde{v}}_{C_0(\Omega; {\U_*})} = |\mu_n|_{\U}(\Omega).
    \end{equation}
    Taking the liminf, the supremum over $v$, and using \eqref{eq:duality_C0_predual} shows
    \begin{equation}
        |\mu|_\U(\Omega)  = \sup_{\norm{v}_{C_0(\Omega; {\U_*})}\leq 1}\abs{\braket{\mu}{v}_{C_0(\Omega; {\U_*})}} \leq \liminf_n |\mu_n|_\U(\Omega).
    \end{equation}
    Therefore, $|\cdot|_{\U}(\Omega)$ is weak-* lower semicontinuous. Moreover, $\{\mu \in \M(\Omega; \U) : |\mu|_\U(\Omega) = 0\} = \{0\}$. Finally, coercivity follows since the sets $\{\mu \in \M(\Omega; \U) : |\mu|_\U(\Omega) \leq \alpha\}$ are weak-* compact by Banach–Alaoglu.  
    
    Thus, all assumptions of Theorem 3.3 in Bredies and Carioni \cite{bredies2020sparsity} are satisfied. Since the extreme points of the unit ball in $\M(\Omega; \U)$ are (see Theorem 2 of Dirk \cite{werner1984extreme}, or Lemma 3.2 of Bredies et al.\ \cite{bredies2024extremal} whose proof applies without modifications in the general setting considered here) characterized as
    \begin{equation}
        \operatorname{Ext}\big(\{\mu \in \M(\Omega; \U) : |\mu|_\U(\Omega) \leq 1\}\big)
        = \{\delta_w u : w \in \Omega, \, u \in \operatorname{Ext}(\{u \in \U : \norm{u}_\U \leq 1\})\},
    \end{equation}
    there exists a solution that has the form
    \begin{equation}
        \mu^\dagger = \sum_{m=1}^{Nd} a_m \, \delta_{w_m} u_m,
    \end{equation}
    with $u_m \in \operatorname{Ext}(\{u \in \U : \norm{u}_\U \leq 1\})$ and $a_m \in \R$.
\end{proof}

\begin{remark}The setup of Theorem \ref{thm:representer_theorem_integral_RKBS} follows Definition \ref{def:vector_integral_RKBS}, where both $X$ and $\Omega$ are assumed to be locally-compact Hausdorff, and which ensures that the generalized Riesz representation of \eqref{eq:duality_C0_and_RadonMeasureSpace} applies both in $\M(\Omega;\U)$ and in $\M(X;\U^\di)$. However, $\B^\di$ does not play a role in the proof, so the representer theorem also holds without restricting $X$ to be locally compact or imposing that $\phi$ vanishes at infinity in the $x$ variable. Effectively, the only requirements are that the integral in \eqref{eq:vv_integral_RKBS_f} should exist, is finite for any $\mu \in \M(\Omega;\U)$, and that $\Omega$ is locally-compact Hausdorff so that the Riesz theorem applies.
\end{remark}

The representation of $f^\dagger$ closely resembles the standard vv-RKHS representation
\begin{equation}
    \sum_{n=1}^N K(\cdot, x_n) u_n,
\end{equation}
where $K \colon X \times X \to \L(\U)$ is the vv-RKHS kernel (Definition \ref{def:RKHS_kernel}), $\U$ is a Hilbert space, and $u_n \in \U$.  
To make the analogy precise, let $W = \{w_1, \ldots, w_{Nd}\}$, so $|W| \leq Nd$.  
For each unique $\widetilde{w}_k \in W$, set
\begin{equation}
    I_{\widetilde{w}_k} = \set{ m \in \{1, \ldots, Nd\} \given w_m = \widetilde{w}_k }, 
    \qquad 
    \widetilde{u}_k = \sum_{m \in I_{\widetilde{w}_k}} a_m u_m.
\end{equation}
Then the representation from Theorem~\ref{thm:representer_theorem_integral_RKBS} can be rewritten as
\begin{equation}
    f^\dagger = A_{\Omega \to X}\mu^\dagger 
      = \sum_{k=1}^{|W|} \phi(\cdot, \widetilde{w}_k) \widetilde{u}_k
      = \sum_{k=1}^{|W|} K_\U(\cdot, \widetilde{w}_k) \widetilde{u}_k,
\end{equation}
where the last equality uses the kernel definition in Definition \ref{def:vector_integral_RKBS}.  
This mirrors the vv-RKHS formula, with the difference that we only guarantee $|W| \leq Nd$, not $|W| \leq N$.

To understand this limitation, substitute the above $f^\dagger$ into the optimization problem, assuming the optimal $\widetilde{w}_k$ are already fixed:
\begin{equation}
    \min_{\widetilde{u}_k} \; \frac{1}{N}\sum_{n=1}^N 
    L\!\left( \sum_{k=1}^{|W|} \phi(x_n, \widetilde{w}_k)\widetilde{u}_k, \, y_n \right) 
    + \lambda \sum_{k=1}^{|W|} \norm{\widetilde{u}_k}_\U.
\end{equation}

Here, the role of the norm on $\U$ is decisive. If $\widetilde{u}_k \in \R^d$ and $\norm{\cdot}_\U$ behaves like an $\ell_1$ norm, no groupwise sparsity is expected, so the number of active terms $|W|$ need not reduce to $N$. In this case, the vectors $u_i \in \R^d$ correspond to standard basis vectors since they are extremal points of the $\ell_1$ unit ball. Consequently, we obtain at most $N d$ nonzero coefficients, one per component across the $N d$ terms.

In contrast, if $\norm{\cdot}_\U$ behaves more like an $\ell_2$ norm, the formulation resembles a group-lasso penalty, which is known to promote entire vectors to vanish \cite{bach2010structured}. This effect may decrease the number of active terms, potentially yielding $|W| \leq N$. Another important difference compared to the $\ell_1$ case is that the $u_i \in \R^d$ are no longer the standard basis vectors but can have full entries as they lie on the $\ell_2$ unit ball. If groupwise sparsity were possible, we would again obtain only $N d$ nonzero coefficients, analogous to the $\ell_1$ case. However, because the representer theorem in our setting does not permit such groupwise sparsity, the number of coefficients increases to $N d^2$ when using the $\ell_2$ norm.

A detailed investigation into groupwise sparsity is left for future work.

\section{\texorpdfstring{$\R^d$}{Rd}-valued neural networks, hypernetworks, and DeepONets} \label{sec:vv-NNs_HypNets_DeepONets}
This section constructs the spaces for $\R^d$-valued neural networks, hypernetworks, and DeepONets, and derives the corresponding representer theorems by applying Theorem \ref{thm:representer_theorem_integral_RKBS} to each setting.

\subsection{\texorpdfstring{$\R^d$}{Rd}-valued neural network}
For $\R^d$-valued neural networks, the following representer theorem is immediate. 
\begin{corollary}[Representer theorem $\R^d$-valued neural network]
    Assume we are given data $\{(x_n, u_n)\}_{n=1}^N \subseteq X \times \R^d$ .  
    Let $L \colon \R^d \times \R^d \to \R$ be such that for every $u \in \R^d$, the map $L(\cdot, u)$ is convex, coercive, and lower semicontinuous.   
    Let $\B$ be a neural vv-RKBS of functions from compact $X \subseteq \R^{d_x}$ to $\R^{d}$ with weight space $\Omega \subseteq \R^{d_x} \times \R$ and 
    \begin{equation}
        \phi \in C_0(X \times \Omega),\quad \phi(x,(\omega,b)) \coloneqq \sigma(\braket{\omega}{x}_{\ell_2} + b) \beta((\omega,b)),
    \end{equation}
    for a measurable activation $\sigma \colon \mathbb R \to \mathbb R$ and a measurable positive $\beta \colon \Omega \to \mathbb R$.
    
    Then there exists a solution to the supervised learning problem
    \begin{equation}
        \min_{f \in \B} \; \frac{1}{N} \sum_{n=1}^N L(f(x_n), u_n) + \lambda \norm{f}_\B, 
        \qquad \lambda \geq 0,
    \end{equation}
    that admits a neural network representation:
    \begin{equation}
        f^\dagger(x) = A_{\Omega \to X} \mu^\dagger(x)
          = \sum_{m=1}^{Nd} \sigma(\braket{\omega_m}{x}_{\ell_2} + b_m) u_m = U \sigma(Wx + B),
    \end{equation}
    with $U \in \R^{d_u \times Nd}$ containing the $u_m$ as columns, $W \in \R^{Nd \times d_x}$ the $\omega_m$ as rows, and $B \in \R^{Nd}$ the $b_m$ values. 
\end{corollary}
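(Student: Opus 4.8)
The plan is to derive this corollary as an immediate specialization of Theorem~\ref{thm:representer_theorem_integral_RKBS}; essentially all of the analytic content already lives there, so the remaining work is to check that the finite-dimensional network setting meets its hypotheses and then to rewrite the abstract minimizer in matrix form. First I would identify the output space as the Banach space $\U = \R^d$. Because $\R^d$ is reflexive, it is its own predual, so I take $\U_* = \R^d$ with the canonical pairing given by the Euclidean inner product. The relevant measurement operator is the identity $M = \mathrm{Id}_{\R^d}$, which is bounded and linear with components $(Mu)_j = u_j = \braket{u}{e_j}_{\U_*}$, where the standard basis vectors $e_j$ play the role of the functionals $v_j \in \U_*$ required by the theorem. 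With $M$ the identity, the data term $L(M(f(x_n)), u_n)$ collapses to $L(f(x_n), u_n)$, matching the objective in the corollary, and the neural vv-RKBS is by construction a particular integral vv-RKBS (Definition~\ref{def:vector_integral_RKBS}) with the stated feature function $\phi$, so all structural assumptions hold.

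Next I would apply Theorem~\ref{thm:representer_theorem_integral_RKBS} directly. It produces a minimizer of the sparse form
\begin{equation*}
    f^\dagger = \sum_{m=1}^{Nd} a_m\, \phi(\cdot, w_m)\, u_m,
\end{equation*}
with $a_m \in \R$, $w_m = (\omega_m, b_m) \in \Omega$, and each $u_m$ an extreme point of the unit ball of $\R^d$. Writing out $\phi(x, w_m) = \sigma(\braket{\omega_m}{x}_{\ell_2} + b_m)\beta(w_m)$ then gives $f^\dagger(x) = \sum_{m=1}^{Nd} a_m \beta(w_m)\, \sigma(\braket{\omega_m}{x}_{\ell_2} + b_m)\, u_m$.

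Finally, since $a_m$ and $\beta(w_m)$ are scalars, I would absorb them into the output vectors by setting $\widetilde{u}_m \coloneqq a_m \beta(w_m) u_m \in \R^d$, which leaves the affine pre-activation $\braket{\omega_m}{x}_{\ell_2} + b_m$ untouched. The representation becomes $f^\dagger(x) = \sum_{m=1}^{Nd} \sigma(\braket{\omega_m}{x}_{\ell_2} + b_m)\, \widetilde{u}_m$, which is precisely the single-hidden-layer network $U\sigma(Wx + B)$ once the $\widetilde{u}_m$ are collected as the columns of $U$, the $\omega_m$ as the rows of $W$, and the $b_m$ as the entries of $B$.

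I do not anticipate a substantive obstacle, as the statement is a specialization rather than a new result. The only points requiring care are confirming that $\R^d$ furnishes a legitimate predual together with an identity measurement operator of the required pairing form, so that Theorem~\ref{thm:representer_theorem_integral_RKBS} applies verbatim, and the bookkeeping of the scalar weights $a_m\beta(w_m)$ so that the network's affine maps are recovered without modification.
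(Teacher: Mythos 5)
Your proposal is correct and follows essentially the same route as the paper's own proof: apply Theorem~\ref{thm:representer_theorem_integral_RKBS} with $\U = \R^d$ (its own predual, identity measurement operator) and then absorb the scalars $a_m\beta((\omega_m,b_m))$ into the output vectors $u_m$. The paper's proof is just a terser version of this; your additional verification that the identity map has the required pairing form $(Mu)_j = \braket{u}{e_j}$ is a reasonable elaboration of what the paper leaves implicit.
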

\begin{proof}
    By Theorem \ref{thm:representer_theorem_integral_RKBS}, we get:
    \begin{equation}
          f^\dagger = A_{\Omega \to X} \mu^\dagger 
          = \sum_{m=1}^{Nd} a_m  \sigma(\braket{\omega_m}{\cdot}_{\ell_2} + b_m) \beta((\omega_m, b_m)) u_m,
    \end{equation}
    Redefining $u_m$ to $a_m \beta((\omega_m, b_m)) u_m$ yields the result.
\end{proof}

\subsection{DeepONets and hypernetworks} \label{sec:deeponets_and_hypernets}
As noted in the introduction, neural operators and conditional implicit neural representations (INRs) are closely related. Many neural operator methods, including DeepONet, FNO, and CNO, can be viewed as special cases of INRs. In this section, we investigate the vv-RKBS structure of neural operators via this INR perspective, particularly focusing on DeepONet and hypernetworks. 

We begin by introducing two function spaces that cover both hypernetworks and DeepONets. Subsequently, we formulate an optimization problem for each function space and prove a joint representer theorem showing that the two problems admit the same sparse solution. The section concludes with a discussion of the function space formulations.

\subsubsection{Function space covering the DeepONet and hypernetwork}

The DeepONet framework is designed to learn mappings from inputs $z \in Z$ to functions $u \colon X \to \V$ for some Banach space $\V$. It achieves this by representing the output as a linear combination of learned basis functions:
\begin{equation}
f(z)(x) = \sum_{n=1}^{n_b} a_n(z) \zeta_n(x),
\label{eq:deeponet_formula}
\end{equation}
where both the coefficients $a_n \colon Z \to \R$ and the basis functions $\zeta_n \colon X \to \V$ 
 are parameterized by neural networks. The basis functions can be obtained through standard training methods such as gradient descent \cite{lu2021learning} or constructed using techniques like proper orthogonal decomposition (POD) \cite{lu2022comprehensive}.

The DeepONet learns a linear subspace of functions. Hypernetworks provide a more nonlinear parameterization by mapping an input to the weights of another base network. Equivalently, this can be seen as mapping an input to an entire neural network.

To construct a function space covering both DeepONets and hypernetworks, we note that DeepONets define linear subspaces, whereas hypernetworks yield nonlinear parameterizations. This suggests that DeepONets may arise as elements of a hypernetwork function space. We confirm this intuition and introduce two hypernetwork function spaces: one interpreting the hypernetwork as a mapping to weights and the other as a mapping to entire networks. 

\begin{definition}[Hypernetworks in weight form]
    Let $Z$, $X$, $\Omega$, and $\Theta$ be locally-compact Hausdorff and $(\V, \V^\di)$ a continuous dual pair of Banach spaces. Define
    \begin{subequations}
    \begin{align}
        \B_h &= \set*{f: Z\to \M(\Theta;\V) \given f(z) = \int_\Omega \phi(z,w)d\mu(w), \mu\in \M(\Omega;\M(\Theta;\V))} \\
        \norm{f}_{\B_h} &= \inf\set[\bigg]{ |\mu|_{\M(\Theta;\V)}(\Omega) \given \mu\in \M(\Omega;\M(\Theta;\V))\text{ satisfies }f(z) = \int_\Omega \phi(z,w)d\mu(w)} \\
        \B^\di_h &= \overline{\B^\di_{h,\text{pre}}}= \overline{\set*{g: \Omega\to \M(X;\V^\di) \given g(w) = \int_Z \phi(z,w)d\pi(z), \pi\in \M(Z;\M(X;\V^\di))}} \\
        \norm{g}_{\B^\di_h} &= \sup_{w \in \Omega} |g(w)
        |_{\V^\di}(X) \\
        \braket{g}{f}_{\B_h} &= \braket{\mu}{g}_{C_0(\Omega; \M(X;\V^\di))} \stackrel{g\in\B^\di_{h, \text{pre}}}=\int_{Z \times \Omega} \phi(z,w) \,d\!\braket{\pi}{\mu}_\M\!(z,w), 
    \end{align}    
    \end{subequations}
    where $\braket{\pi}{\mu}_\M$ is the Hahn-Kolmogorov extension of the scalar-valued measure satisfying 
    \begin{equation}
        \braket{\pi}{\mu}_\M(E \times F) = \braket{\pi(E)}{\mu(F)}_{\M(X; \V^\di),\M(\Theta; \V)}
    \end{equation}
    with
    \begin{equation}
        \braket{\rho}{\nu}_{\M(X; \V^\di),\M(\Theta; \V)} \coloneqq \int_{X \times \Theta} d \braket{\rho}{\nu}_\V(x, \theta)
    \end{equation}
    and $\braket{\rho}{\nu}_\V$ the Hahn-Kolmogorov extension of the scalar measure satisfying $\braket{\rho}{\nu}_\V(E\times F)= \braket{\rho(E)}{\nu(F)}_\V$ under the duality $(\V, \V^\di)$. The pair $(\B_h, \B_h^\di)$ is called an integral vv-RKBS pair of hypernetworks \textit{in weight space form}. 
    \label{def:hypernetwork_RKBS_weight_form}
\end{definition}
\begin{definition}[Hypernetworks in function form]
    Let $Z$, $X$, $\Omega$, and $\Theta$ be locally-compact Hausdorff and $(\V, \V^\di)$ a continuous dual pair of Banach spaces.
    Additionally, let $(\B, \B^\di)$ be an integral vv-RKBS pair with $\B \subset \{ u\colon X \to \V\}$, $\B^\di \subset \{ v\colon \Theta \to \V^\di\}$, and kernel $\psi \colon X \times \Theta \to \R$. Define
    \begin{subequations}
    \begin{align}
        \B_h &= \set*{f: Z\to \B \given f(z) = \int_\Omega \phi(z,w)d\mu(w), \mu\in \M(\Omega;\B)} \\
        \norm{f}_{\B_h} &= \inf\set[\bigg]{ |\mu|_{\B}(\Omega) \given \mu\in \M(\Omega;\B)\text{ satisfies }f(z) = \int_\Omega \phi(z,w)d\mu(w)} \\
        \B^\di_h &= \overline{\B^\di_{h,\text{pre}}} = \overline{\set*{g: \Omega\to \B^\di \given g(w) = \int_Z \phi(z,w)d\pi(z), \pi\in \M(Z;\B^\di)}} \\
        \norm{g}_{\B^\di_h} &= \sup_{w \in \Omega} \norm{g(w)}_{\B^\di} \\
        \braket{g}{f}_{\B_h} &= \braket{\mu}{g}_{C_0(\Omega; \B^\di)} \stackrel{g\in\B^\di_{h,\text{pre}}} = \int_{Z \times \Omega} \phi(z,w) \,d\!\braket{\pi}{\mu}_\B \!(z,w)
    \end{align}    
    \end{subequations}
    where $\braket{\pi}{\mu}_\B$ is the Hahn-Kolmogorov extension of the scalar measure satisfying $\braket{\pi}{\mu}_\B(E \times F) = \braket{\pi(E)}{\mu(F)}_\B$, the latter pairing being as in Definition \ref{def:adjoint_pair_vector_RKBS}. The pair $(\B_h, \B_h^\di)$ is called an integral vv-RKBS pair of hypernetworks \textit{in function space form}.
    \label{def:hypernetwork_RKBS_func_form}
\end{definition}

\begin{remark}
In the weight-space viewpoint, it seems natural to let $\M(\Theta;\V)$ and $\M(X;\V^\di)$ have a pairing that reflects how the output weights enter the base neural network's structure:
\begin{equation}
    \braket{\rho}{\mu}_{\M(X,\V^\di),\,\M(\Theta,\V)} 
    = \int_{X\times\Theta}\psi(x,\theta)\,d\braket{\rho}{\mu}_\V
    = \braket{g}{f}_{\B},
\end{equation}
where $\braket{\rho}{\mu}_\V(A\times B)=\braket{\rho(A)}{\mu(B)}_\V$. The last equality follows from Definition \ref{def:vector_integral_RKBS}, where $(\B, \B^\di)$ is the integral vv-RKBS pair in the hypernetwork's function-space viewpoint. However, if the nullspace of $A_{\Theta\to X}$ is nontrivial, then $\braket{\rho}{\mu}_{\M(X,\V^\di),\,\M(\Theta,\V)}=\braket{g}{f}_\B=\braket{g}{0}_\B=0$ may hold for nonzero $\rho,\mu$. Hence, this pairing is dual only when the nullspace of $A_{\Theta\to X}$ is trivial. So the duality assumption in Definition~\ref{def:vector_integral_RKBS} holds only in that case. In contrast, the function-space viewpoint avoids this issue, since it pairs $f$ and $g$ directly rather than their representing measures.
\end{remark}

\begin{remark}
    The function-space viewpoint of the integral vv-RKBS pair of hypernetworks naturally contains DeepONets as a special case. Let $(\B, \B^\di)$ be a neural vv-RKBS, and take $\{\zeta_n\}_{n=1}^{n_b} \subset \B$ as basis functions, which are neural networks by assumption. Define the $\B$-valued measure  
    \begin{equation}
        \mu = \sum_{n=1}^{n_b} \left(\sum_{k=1}^{h_{nn}} a_{nk} \delta_{w_{nk}} \right) \zeta_n
    \end{equation} 
    Then  
    \begin{equation}
        f(z)(x) = (A_{\Omega \to Z}\mu)(z)(x) = \sum_{n=1}^{n_b} \left(\sum_{k=1}^{h_{nn}} a_{nk} \phi(z, w_{nk}) \right) \zeta_n(x) \eqqcolon \sum_{n=1}^{n_b} a_n(z) \zeta_n(x)
    \end{equation}
    which recovers the DeepONet formulation in \eqref{eq:deeponet_formula}.
\end{remark}
While the weight and function space viewpoints are similar, they differ in key aspects. The weight-space viewpoint first maps to measures and then integrates them to obtain a function. The function-space viewpoint does the opposite and first maps to functions (integrating out measures) and then integrates a measure of these functions. A more significant difference lies in the norms. The weight-space viewpoint uses the total variation of $\M(\Theta; \U)$ within the total variation of $\M(\Omega; \M(\Theta; \U))$, while the function-space viewpoint uses $\norm{\cdot}_\B$ inside the total variation of $\M(\Omega; \B)$. These differences affect the resulting optimization problems. 

\subsubsection{Shared representer theorem}
Since the two function spaces describe the same structure, we expect the corresponding optimization problems to have related solutions and share a representer theorem. To define these optimization problems, let $A_{\Omega \to Z}$ be as in Equations \eqref{eq:vv_integral_RKBS_f}--\eqref{eq:vv_integral_RKBS_g}, with $\phi$ as integrand. Since this operation is valid for any measure on $\Omega$, the same notation applies whether we consider $\M(\Omega; \M(\Theta; \V))$ or $\M(\Omega; \B)$. Similarly, independent of the target space of $\nu$, we define  
\begin{equation}\label{eq:defAThetaX}
    A_{\Theta \to X} \nu \coloneqq \int_\Theta \psi(\cdot, \theta) d\nu(\theta)
\end{equation}
with $\psi \colon X \times \Theta \to \R$ as in Definition \ref{def:hypernetwork_RKBS_func_form}.

We also define a measurement operator $M \colon \B \to \R^d$, where $\B$ is the output RKBS in the hypernetwork function-space formulation. In the weight-space formulation, we apply $M$ to $f = A_{\Theta \to X}(u(z)) \in \B$ with $u(z) \in \M(\Theta; \V)$. The components of $M$ are taken as  
\begin{equation}
    M_{n} f = \braket{v_n^\di}{f(x_n)}_\V.
    \label{eq:measurement_operator_hypernetwork_RKBS}
\end{equation}
If $\V = \R^d$, one may consider
\begin{equation}
    M_{nj} f = \braket{e_j}{f(x_n)}_{\ell_2},
\end{equation}
which produces measurements for each component and is simply a reindexed version of \eqref{eq:measurement_operator_hypernetwork_RKBS}.  

With these definitions, the optimization problems can be written directly in terms of measures rather than functions $f \in \B_h$. By the first step in the proof of Theorem \ref{thm:representer_theorem_integral_RKBS}, this suffices since minimizing over measures yields the same minimizer as minimizing over $f$.

In the weight-space viewpoint, using $|\mu|_\M$ as short-hand notation for $|\mu|_{\M(\Theta;\V)}$, the problem is
\begin{equation}
    \min_{\mu \in \M(\Omega; \M(\Theta;\V))} \; \frac{1}{N} \sum_{n=1}^N L(M(A_{\Theta \to X}( (A_{\Omega \to Z}\mu)(z_n))), y_n) + \lambda |\mu|_{\M}(\Omega)
    \label{eq:opt_prob_weight_viewpoint_hypnet}
\end{equation}
while in the function-space viewpoint, it is
\begin{equation}
    \min_{\mu \in \M(\Omega; \B)} \; \frac{1}{N} \sum_{n=1}^N L(M( (A_{\Omega \to Z}\mu)(z_n))), y_n) + \lambda |\mu|_{\B}(\Omega)
    \label{eq:opt_prob_function_viewpoint_hypnet}
\end{equation}
The key differences are that $A_{\Theta \to X}$ appears explicitly in the weight-space formulation but is absorbed into $\mu$ in the function-space formulation, and the total variation regularization uses different inner norms. Despite these differences, we can show that a solution to the optimization problem corresponding to the weight space viewpoint (see \eqref{eq:opt_prob_weight_viewpoint_hypnet}) provides a solution to the optimization problem of the function space viewpoint (see \eqref{eq:opt_prob_function_viewpoint_hypnet}).
\begin{theorem}
    Assume the optimization problem corresponding to the weight space viewpoint (see \eqref{eq:opt_prob_weight_viewpoint_hypnet}) has a minimizer $\mu_\M^\dagger \in \M(\Omega; \M(\Theta; \V))$. Then the measure $\mu_\B^\dagger \in \M(\Omega;\B)$ defined via $\mu_\B^\dagger(E) \coloneqq (A_{\Theta \to X} (\mu_\M^\dagger(E)))$ is a solution to the optimization problem of the function space viewpoint (see \eqref{eq:opt_prob_function_viewpoint_hypnet}).
\end{theorem}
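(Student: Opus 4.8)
The plan is to identify $\mu_\B^\dagger$ as a minimizer of the function-space objective by sandwiching the two optimal values. Write $J_\M$ and $J_\B$ for the objective functionals in \eqref{eq:opt_prob_weight_viewpoint_hypnet} and \eqref{eq:opt_prob_function_viewpoint_hypnet}, respectively. First I would check that $\mu_\B^\dagger \coloneqq A_{\Theta\to X}\circ\mu_\M^\dagger$ is a genuine element of $\M(\Omega;\B)$. Since the feature-map norm of $\B$ gives $\norm{A_{\Theta\to X}\sigma}_\B \le |\sigma|_\V(\Theta)$, the operator $A_{\Theta\to X}\colon \M(\Theta;\V)\to\B$ is bounded and linear with operator norm at most $1$; post-composing a countably additive measure of bounded variation with such an operator again yields one, and $|\mu_\B^\dagger|_\B(\Omega)\le |\mu_\M^\dagger|_\M(\Omega)$. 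Because $A_{\Theta\to X}$ is bounded and linear it commutes with integration of the scalar function $\phi(z_n,\cdot)$ against a vector measure, which I would verify by uniform approximation of $\phi$ by simple functions exactly as in the proof of Theorem~\ref{thm:vv-RKBS_pairing_is_well-defined}. Hence $(A_{\Omega\to Z}\mu_\B^\dagger)(z_n) = A_{\Theta\to X}\big((A_{\Omega\to Z}\mu_\M^\dagger)(z_n)\big)$, so the data-fidelity terms of $J_\B(\mu_\B^\dagger)$ and $J_\M(\mu_\M^\dagger)$ coincide, and together with the total-variation bound this gives $J_\B(\mu_\B^\dagger)\le J_\M(\mu_\M^\dagger)$.

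It remains to prove the reverse inequality $J_\M(\mu_\M^\dagger)\le \inf_{\nu\in\M(\Omega;\B)} J_\B(\nu)$, which then forces $\mu_\B^\dagger$ to be optimal. The main obstacle is that an arbitrary $\B$-valued measure need not admit a lift back to $\M(\Omega;\M(\Theta;\V))$ with comparable total variation, so a direct lifting of a general competitor $\nu$ is unavailable. I would sidestep this by applying the representer theorem (Theorem~\ref{thm:representer_theorem_integral_RKBS}) to the function-space problem itself: the output space is $\B$ with predual $\B^\di$, and the measurement functional rewrites via the reproducing property as $M_n f = \braket{v_n^\di}{f(x_n)}_\V = \braket{\psi(x_n,\cdot)v_n^\di}{f}_\B$ with $\psi(x_n,\cdot)v_n^\di\in\B^\di$, so its hypotheses are met. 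This yields a sparse minimizer of $J_\B$, and after merging any Diracs sharing a common node we may write it as $\nu^\ast = \sum_m a_m\,\delta_{w_m} b_m$ with finitely many distinct $w_m\in\Omega$ and $b_m\in\B$, satisfying $J_\B(\nu^\ast)=\inf_\nu J_\B(\nu)$.

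For this finitely supported measure the lift is elementary. Using $\norm{b_m}_\B = \inf\set{|\sigma|_\V(\Theta)\given A_{\Theta\to X}\sigma = b_m}$, I would pick, for $\varepsilon>0$, representatives $\sigma_m\in\M(\Theta;\V)$ with $A_{\Theta\to X}\sigma_m = b_m$ and $|\sigma_m|_\V(\Theta)\le \norm{b_m}_\B + \varepsilon_m$, and set $\tilde\nu \coloneqq \sum_m a_m\,\delta_{w_m}\sigma_m\in\M(\Omega;\M(\Theta;\V))$. Then $A_{\Theta\to X}\circ\tilde\nu = \nu^\ast$, so by the same commutation identity the data terms of $J_\M(\tilde\nu)$ agree with those of $J_\B(\nu^\ast)$, while $|\tilde\nu|_\M(\Omega) = \sum_m |a_m|\,|\sigma_m|_\V(\Theta)\le |\nu^\ast|_\B(\Omega) + \varepsilon$ once the $\varepsilon_m$ satisfy $\sum_m|a_m|\varepsilon_m\le\varepsilon$. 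Since $\mu_\M^\dagger$ minimizes $J_\M$, this gives $J_\M(\mu_\M^\dagger)\le J_\M(\tilde\nu)\le J_\B(\nu^\ast)+\lambda\varepsilon$, and letting $\varepsilon\to 0$ yields $J_\M(\mu_\M^\dagger)\le \inf_\nu J_\B(\nu)$. Combining with the first paragraph, $J_\B(\mu_\B^\dagger)\le J_\M(\mu_\M^\dagger)\le \inf_\nu J_\B(\nu)\le J_\B(\mu_\B^\dagger)$, so equality holds throughout and $\mu_\B^\dagger$ solves \eqref{eq:opt_prob_function_viewpoint_hypnet}. The one point that warrants care is the applicability of the representer theorem to the $\B$-valued problem, namely that $\B$ carries the predual $\B^\di$ and that $M$ has the required pairing form; this is precisely what makes the sparse reduction, and hence the termwise lift, possible.
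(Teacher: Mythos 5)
Your first paragraph reproduces the paper's own groundwork: the operator $T\mu_\M \coloneqq A_{\Theta\to X}\circ\mu_\M$ is total-variation non-increasing (this is exactly inequality \eqref{ineq:B_tot_var_against_M_tot_var} in the paper's proof) and commutes with $A_{\Omega\to Z}$ by uniform approximation of $\phi$ with simple functions, giving $J_\B(\mu_\B^\dagger)\le J_\M(\mu_\M^\dagger)$. The divergence, and the genuine gap, lies in your reverse inequality. You apply Theorem~\ref{thm:representer_theorem_integral_RKBS} to the function-space problem, whose output space is the inner integral vv-RKBS $\B$. That theorem requires the output space to possess a predual and the measurement functionals to be induced by elements of that predual under the canonical pairing; this is precisely what makes the space of measures a dual space, so that Banach--Alaoglu and the Bredies--Carioni machinery apply. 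You assert in passing that ``$\B$ carries the predual $\B^\di$'', but this is established nowhere in the paper for \emph{vector-valued} integral RKBSs: the predual statement $(\B^\di)^*\cong\B$ is proven only in the scalar case (cited from \cite{spek2025duality}), while in the vector-valued case the paper proves only the much weaker facts that $\B^\di$ is a $\delta$-dual and embeds isometrically into $\B^*$ (Theorem~\ref{thm:B_diamond_in_integral_RKBS_is_delta_dual}). A proof of the predual claim would require, for instance, showing that $\mathcal{N}(A_{\Theta\to X})$ is weak-* closed in $\M(\Theta;\V)\cong C_0(\Theta;\V^\di)^*$, identifying $\B$ with the dual of the pre-annihilator of this nullspace via the bipolar theorem, and matching that pre-annihilator with $\B^\di$ through the density result; none of this appears in your proposal, and you yourself flag it as ``the one point that warrants care'' without resolving it. As written, the sparse reduction, and hence your termwise lift, is unsupported.

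There is also a hypothesis mismatch. The statement assumes only that the weight-space problem has a minimizer (plus, implicitly, continuity of $L$ in its first argument, which is all the paper's proof uses); it does not assume that $\V$ has a predual, nor that $L(\cdot,y)$ is convex, coercive and lower semicontinuous. Your route needs all of these, since they are hypotheses of Theorem~\ref{thm:representer_theorem_integral_RKBS}, and in the paper they enter only later, in Theorem~\ref{thm:representer_hypernetwork}. The paper's proof avoids both issues by never invoking the representer machinery on the function-space problem: it takes an \emph{arbitrary} competitor $\mu_\B\in\M(\Omega;\B)$, discretizes it using Lemma~\ref{lemma:C0_domain_decomposition} together with a near-optimal total-variation partition into a finitely supported measure, lifts that discrete measure termwise with near-optimal representatives (exactly the elementary lift of your third paragraph), shows that the data term and the regularizer converge along the discretization, and then uses optimality of $\mu_\M^\dagger$ on the lifts. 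In short, the paper applies your termwise lift to approximations of \emph{every} competitor rather than to a sparse minimizer, which is what removes the need for the unproven predual claim and the extra hypotheses. To repair your argument you would either have to prove $(\B^\di)^*\cong\B$ in the vector-valued setting, or replace the appeal to the representer theorem with the paper's approximation step.
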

\begin{proof}
    Pick any $\varepsilon > 0$ and $\mu_\B \in \M(\Omega; \B)$. By Lemma \ref{lemma:C0_domain_decomposition}, there exists a decomposition of $Z$ into finitely-many pairwise-disjoint sets $A_1, \ldots, A_\ell \subseteq Z$ and a decomposition of $\Omega$ into finitely-many pairwise-disjoint sets $B_1, \ldots, B_I \subseteq \Omega$ such that 
    \begin{equation}
        |\phi(z,w) - \phi(z, \widetilde{w})| \leq \varepsilon \quad \text{for all } z \in Z,\text{all } w, \widetilde{w} \in B_i \text{ and all }i\in \set{1,\hdots, I}.
    \end{equation}
    For the same $\varepsilon$, by the definition of the total variation, there exists a partition $\widetilde{B}_1, \ldots \widetilde{B}_{J} \subseteq \Omega$ such that
    \begin{equation}
        |\mu_\B|_\B(\Omega) \leq  \varepsilon + \sum_{j=1}^J \norm{\mu_\B(\widetilde{B}_j)}_\B 
    \end{equation}
    Define $\overline{B}_{ji} \coloneqq \widetilde{B}_j \cap B_i$. Using the finite additivity of vector measures, we obtain
    \begin{equation}
        |\mu_\B|_\B(\Omega)  \leq \varepsilon + \sum_{j=1}^J \norm{\sum_{i=1}^I \mu_\B(\overline{B}_{ji})}_\B \leq \varepsilon + \sum_{j=1}^J \sum_{i=1}^I \norm{ \mu_\B(\overline{B}_{ji})}_\B = \varepsilon + \sum_{j=1}^J \sum_{i=1}^I \inf_{A_{\Theta \to X}\nu = \mu_\B(\overline{B}_{ji})} |\nu|_\V(\Theta)
    \end{equation}
    For an arbitrary $\delta > 0$, let $\nu_{ji} \in \M(\Theta;\V)$ be chosen such that
    \begin{equation}
        |\nu_{ji}|_\V(\Theta) \leq \left(\inf_{A_{\Theta \to X}\nu = \mu_\B(\overline{B}_{ji})} |\nu|_\V(\Theta)\right) + \frac{\delta}{IJ} = \norm{\mu_\B(\overline{B}_{ji})}_\B + \frac{\delta}{IJ}.
    \end{equation}
    Fix $\delta = \varepsilon = \frac{1}{2k}$ and define
    \begin{equation}
        \mu_{\M}^{(k)} \coloneqq \sum_{j=1}^J \sum_{i=1}^I \delta_{w_{ji}} \nu_{ji}
    \end{equation}
    for arbitrary $w_{ji} \in \overline{B}_{ji}$. Then
    \begin{equation}
    \begin{split}
        |\mu_\B|_\B(\Omega) &\leq \frac{1}{2k} + | \mu_{\M}^{(k)} |_{\M}(\Omega) \\&= \frac{1}{2k} + \left(\sum_{j=1}^J \sum_{i=1}^I |\nu_{ji}|_\V(\Theta)\right)  \\ &\leq \frac{1}{2k} + \left(\sum_{j=1}^J \sum_{i=1}^I \norm{\mu_\B(\overline{B}_{ji})}_\B\right) + \frac{1}{2k} \\ &\leq \frac{1}{k} + |\mu_\B|_\B(\Omega),
    \end{split}
    \end{equation}
    which shows that $| \mu_{\M}^{(k)} |_{\M}(\Omega) \to |\mu_\B|_\B(\Omega)$ as $k \to \infty$. 

    Define $\mu_\B^{(k)}$ via the bounded linear operator $T$
    \begin{equation}
        \mu_\B^{(k)}(B) \coloneqq T(\mu_\M^{(k)})(B) \coloneqq A_{\Theta \to X}(\mu_{\M}^{(k)}(B)) = \sum_{j=1}^J \sum_{i=1}^I \delta_{w_{ji}}(B) A_{\Theta \to X}(\nu_{ji}) = \sum_{j=1}^J \sum_{i=1}^I \delta_{w_{ji}}(B) \mu_\B(\overline{B}_{ji})
    \end{equation}
    The operator is bounded because 
    \begin{equation}
        \left|T\mu_\M\right|_\B (\Omega) = \sup_{\set{\hat{B}_\bullet}} \sum_{h=1}^{|\set{\hat{B}_\bullet}|}\norm{A_{\Omega \to X} (\mu_\M(\hat{B}_h))}_\B \leq \sup_{\set{\hat{B}_\bullet}} \sum_{h=1}^{|\set{\hat{B}_\bullet}|} \norm{\mu_\M(\hat{B}_h)}_{\M(\Theta; \V)} = \left|\mu_\M\right|_\M(\Omega)
        \label{ineq:B_tot_var_against_M_tot_var}
    \end{equation}
    with the suprema going over all the partitions $\set{\hat{B}_\bullet}$ of $\Omega$ and $\mu_\M \in \M(\Omega; \M(\Theta; \V))$ is arbitrary.
    
    We can estimate
    \begin{equation}
    \begin{split}
        \norm{\int_\Omega \phi(z, w) d \mu_\B - \int_\Omega \phi(z,w) d \mu_\B^{(k)}}_\B & \leq \sum_{j=1}^J \sum_{i=1}^I \norm{\int_{\overline{B}_{ji}} \phi(z,w) d \mu_\B - \int_{\overline{B}_{ji}} \phi(z,w) d \mu_\B^{(k)}}_\B \\
        & = \sum_{j=1}^J \sum_{i=1}^I \norm{\int_{\overline{B}_{ji}} \phi(z,w) d \mu_\B - \int_{\overline{B}_{ji}} \phi(z,w_{ji}) d \mu_\B}_\B \\
        & \leq \sum_{j=1}^J \sum_{i=1}^I \left(\sup_{w, \widetilde{w} \in \overline{B}_{ji}} |\phi(z, w) - \phi(z, \widetilde{w})| \right) |\mu_\B|_\B(\overline{B}_{ji}) \\&\leq \frac{1}{2k} |\mu_\B|_\B(\Omega)
    \end{split}
    \end{equation}
    This shows that $A_{\Omega \to Z} \mu_\B^{(k)} \to A_{\Omega \to Z}\mu_\B$ uniformly in $z$. Since $M$ is a bounded linear operator on $\B$, we also have
    \begin{equation}
        M((A_{\Omega \to Z} \mu_\B^{(k)})(z)) \to M((A_{\Omega \to Z} \mu_\B)(z)) \quad \text{uniformly in } z.
    \end{equation}
    By continuity of $L$ in the first argument and convergence of the regularization terms, it follows that
    \begin{equation}
        \frac{1}{N}\sum_{n=1}^N L(M((A_{\Omega \to Z} \mu_\B)(z_n)), y_n) + \lambda |\mu_\B|_\B(\Omega) = \lim_{k \to \infty}  \frac{1}{N}\sum_{n=1}^N L(M((A_{\Omega \to Z} \left(T\mu_\M^{(k)}\right))(z_n)), y_n) + \lambda |\mu_\M^{(k)}|_\M(\Omega).
        \label{eq:limiting_equality}
    \end{equation}
    Moreover, by Theorem \ref{thm:simple_approximation_of_C0_functions}, there exists a sequence of simple functions
    \begin{equation}
        \phi_k = \sum_{i,j} a^{(k)}_{ij} \mathbbm{1}_{A_i^{(k)} \times B_j^{(k)}} \to \phi
    \end{equation}
    uniformly, with pairwise disjoint $A_i^{(k)}$'s and $B_j^{(k)}$'s covering $Z$ and $\Omega$, respectively. Then
    \begin{equation}
    \begin{split}
        A_{\Omega \to Z}(T \mu_\M)(z) 
        & = \int_\Omega \phi(z,w) d \left(T \mu_\M\right)(w) \\ 
        &= \lim_{k \to \infty} \int_\Omega \sum_{i,j} a^{(k)}_{ij} \mathbbm{1}_{A_i^{(k)} \times B_j^{(k)}}(z,w) d \left(T \mu_\M\right)(w) \\
        & = \lim_{k \to \infty} \sum_{i,j} a^{(k)}_{ij} \mathbbm{1}_{A_i^{(k)}}(z) A_{\Theta \to X}(\mu_\M(B_j^{(k)})) \\ 
        &= \lim_{k \to \infty}A_{\Theta \to X}\left(  \sum_{i,j} a^{(k)}_{ij} \mathbbm{1}_{A_i^{(k)}}(z)\mu_\M(B_j^{(k)})\right) \\
        & = \lim_{k \to \infty}A_{\Theta \to X}\left( \int_\Omega \sum_{i,j} a^{(k)}_{ij} \mathbbm{1}_{A_i^{(k)} \times B_j^{(k)}}(z,w) d\mu_\M(w)\right) \\
        & = A_{\Theta \to X}\left( \int_\Omega \phi(z,w) d\mu_\M(w)\right) \\
        &= A_{\Theta \to X}((A_{\Omega \to Z} \mu_\M)(z))
    \end{split}
    \end{equation}
    where the second equality follows from $\phi_k \to \phi$ uniformly and the second-to-last equality uses the continuity of $A_{\Theta \to X}$. 
    
    Combining this with \eqref{eq:limiting_equality} yields    
    \begin{equation}
    \begin{split}
        &\frac{1}{N}\left(\sum_{n=1}^N L(M((A_{\Omega \to Z} \mu_\B)(z_n)), y_n) \right) + \lambda |\mu_\B|_\B(\Omega) \\
        &\qquad = \lim_{k \to \infty}  \frac{1}{N}\left(\sum_{n=1}^N L(M((A_{\Omega \to Z} \left(T\mu_\M^{(k)}\right))(z_n)), y_n) \right) + \lambda |\mu_\M^{(k)}|_\M(\Omega) \\
        &\qquad = \lim_{k \to \infty}  \frac{1}{N}\left(\sum_{n=1}^N L(M(A_{\Theta \to X} \left(\left(A_{\Omega \to Z}\mu_\M^{(k)}\right)(z_n)\right)), y_n) \right) + \lambda |\mu_\M^{(k)}|_\M(\Omega) \\
        &\qquad \geq \frac{1}{N}\left(\sum_{n=1}^N L(M(A_{\Theta \to X} \left(\left(A_{\Omega \to Z}\mu_\M^\dagger\right)(z_n)\right)), y_n) \right) + \lambda |\mu_\M^\dagger|_\M(\Omega) \\
        &\qquad = \frac{1}{N}\left(\sum_{n=1}^N L(M\left(\left(A_{\Omega \to Z} \left(T\mu_\M^\dagger)\right)\right)(z_n)\right), y_n) \right) + \lambda |\mu_\M^\dagger|_\M(\Omega) \\
        &\qquad \geq \frac{1}{N}\left(\sum_{n=1}^N L(M((A_{\Omega \to Z}\mu_\B^\dagger)(z_n)), y_n) \right) + \lambda |\mu_\B^\dagger|_\B(\Omega)
    \end{split}
    \end{equation}
    where the first inequality follows from the optimality of $\mu_\M^\dagger$, and the second from the identity $\mu_\B^\dagger = T\mu_\M^\dagger$ together with inequality \eqref{ineq:B_tot_var_against_M_tot_var}. Hence, the above chain of inequalities establishes the optimality of $\mu_\B^\dagger$.
\end{proof}
Building on the previous theorem, we can now state a joint representer theorem for both formulations. In particular, the theorem shows that the optimization problems corresponding to the weight space and function space viewpoints admit a coinciding sparse solution.
\begin{theorem}[Representer theorem hypernetwork]\label{thm:representer_hypernetwork}
    Consider either optimization problem \eqref{eq:opt_prob_weight_viewpoint_hypnet} or \eqref{eq:opt_prob_function_viewpoint_hypnet}, with the measurement operator $M \colon \B \to \R^d$ defined in \eqref{eq:measurement_operator_hypernetwork_RKBS} with $\{(v_j^\di, x_j)\}_{j=1}^d \subseteq \V^\di \times X$. Assume we are given data $\{(z_n, y_n)\}_{n=1}^N \subseteq Z \times \R^d$. Let $L \colon \R^d \times \R^d \to \R$ be such that for every $y \in \R^d$, the map $L(\cdot, y)$ is convex, coercive, and lower semicontinuous. Finally, let $\V$ have a predual $\V^\di$, and let $\Omega$ and $\Theta$ be locally-compact Hausdorff spaces. Then there exist solutions that have the form
    \begin{equation}
        \mu_\M^\dagger = \sum_{m=1}^{Nd} \delta_{w_m} \delta_{\theta_m} v_m, \quad \mu_\B^\dagger = \sum_{m=1}^{Nd} \delta_{w_m} A_{\Theta \to X}\left(\delta_{\theta_m}v_m\right)
    \end{equation}
    with $v_m \in \V$ and scalar coefficients absorbed into $v_m$. Both measures yield the same function
    \begin{equation}
        f^\dagger(z) = \sum_{m=1}^{Nd} \phi(z, w_m) \psi(x, \theta_m) v_m
    \end{equation}    
\end{theorem}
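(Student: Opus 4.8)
The plan is to reduce both problems to the general integral vv-RKBS representer theorem (Theorem~\ref{thm:representer_theorem_integral_RKBS}), applied to the weight-space viewpoint \eqref{eq:opt_prob_weight_viewpoint_hypnet}, and then to transport the resulting sparse minimizer to the function-space viewpoint \eqref{eq:opt_prob_function_viewpoint_hypnet} using the preceding theorem (which shows $\mu_\B^\dagger(E) = A_{\Theta \to X}(\mu_\M^\dagger(E))$ is optimal for \eqref{eq:opt_prob_function_viewpoint_hypnet} whenever $\mu_\M^\dagger$ is optimal for \eqref{eq:opt_prob_weight_viewpoint_hypnet}). To set up the application of Theorem~\ref{thm:representer_theorem_integral_RKBS}, I would first read off from Definition~\ref{def:hypernetwork_RKBS_weight_form} that $\B_h$ is an integral vv-RKBS of functions from $Z$ into the output Banach space $\U \coloneqq \M(\Theta; \V)$, with weight space $\Omega$ and scalar feature $\phi$. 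Since $\V$ admits the predual $\V^\di$, the generalized Riesz representation \eqref{eq:duality_C0_and_RadonMeasureSpace} gives $C_0(\Theta; \V^\di)^* \cong \M(\Theta; (\V^\di)^*) = \M(\Theta; \V)$, so $\U$ has the predual $\U_* = C_0(\Theta; \V^\di)$. This is precisely the structural hypothesis that Theorem~\ref{thm:representer_theorem_integral_RKBS} requires of the output space.

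The key verification is that the composite measurement $\widetilde{M} \coloneqq M \circ A_{\Theta \to X} \colon \M(\Theta; \V) \to \R^d$ fits the predual-pairing form demanded by Theorem~\ref{thm:representer_theorem_integral_RKBS}. For $\nu \in \M(\Theta; \V)$, unwinding the definition of $M$ in \eqref{eq:measurement_operator_hypernetwork_RKBS} and of $A_{\Theta \to X}$ in \eqref{eq:defAThetaX} yields
\begin{equation*}
  \widetilde{M}_j \nu = \braket{v_j^\di}{(A_{\Theta \to X}\nu)(x_j)}_\V = \int_\Theta \psi(x_j, \theta)\, d\braket{v_j^\di}{\nu}_\V(\theta) = \braket{\nu}{\psi(x_j, \cdot)\, v_j^\di}_{C_0(\Theta; \V^\di)},
\end{equation*}
where the last identity uses that the pairing realizing \eqref{eq:duality_C0_and_RadonMeasureSpace} is the continuous extension by simple functions of \eqref{eq:braket_integral_defining_equation}. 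Since $\psi(x_j, \cdot)\, v_j^\di \in C_0(\Theta; \V^\di) = \U_*$ and both $A_{\Theta \to X}$ and $M$ are bounded and linear, $\widetilde{M}$ is a bounded linear operator of exactly the required form. Applying Theorem~\ref{thm:representer_theorem_integral_RKBS} then produces a minimizer $\mu_\M^\dagger = \sum_{m=1}^{Nd} a_m\, \delta_{w_m} u_m$ with $w_m \in \Omega$, $a_m \in \R$, and $u_m \in \operatorname{Ext}(\{u \in \M(\Theta; \V) : |u|_\V(\Theta) \leq 1\})$.

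It then remains to characterize these extreme points and to read off the functions. By the same extremal-point result invoked inside Theorem~\ref{thm:representer_theorem_integral_RKBS} (Werner \cite{werner1984extreme}; Bredies et al.\ \cite{bredies2024extremal}), now reapplied one level down to the vector-measure space $\M(\Theta; \V)$, the extreme points of its unit ball are exactly $\delta_\theta v$ with $\theta \in \Theta$ and $v \in \operatorname{Ext}(\{v \in \V : \norm{v}_\V \leq 1\})$. Hence $u_m = \delta_{\theta_m} v_m$, and absorbing $a_m$ into $v_m$ gives $\mu_\M^\dagger = \sum_{m=1}^{Nd} \delta_{w_m} \delta_{\theta_m} v_m$ with $v_m \in \V$. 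Transporting via $\mu_\B^\dagger(E) = A_{\Theta \to X}(\mu_\M^\dagger(E))$ yields $\mu_\B^\dagger = \sum_{m=1}^{Nd} \delta_{w_m} A_{\Theta \to X}(\delta_{\theta_m} v_m)$, which is optimal for \eqref{eq:opt_prob_function_viewpoint_hypnet} by the preceding theorem. Finally, evaluating $f^\dagger = A_{\Omega \to Z}\mu_\B^\dagger$ and using $(A_{\Theta \to X}(\delta_{\theta_m} v_m))(x) = \psi(x, \theta_m) v_m$ gives
\begin{equation*}
  f^\dagger(z)(x) = \sum_{m=1}^{Nd} \phi(z, w_m)\, \psi(x, \theta_m)\, v_m,
\end{equation*}
as claimed.

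I expect the main obstacle to be the measurement-operator verification in the second paragraph: correctly identifying $C_0(\Theta; \V^\di)$ as the predual of $\M(\Theta; \V)$ (which is exactly what forces the predual assumption on $\V$) and confirming that the integral pairing realizing \eqref{eq:duality_C0_and_RadonMeasureSpace} expresses $M \circ A_{\Theta \to X}$ as pairing against $\psi(x_j, \cdot)\, v_j^\di$. The nested extreme-point characterization of $\M(\Theta; \V)$ is the other delicate point, but it reduces to reapplying the cited result, so I anticipate no essential new difficulty there.
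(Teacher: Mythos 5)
Your proposal is correct and follows essentially the same route as the paper: define $\widetilde{M} \coloneqq M \circ A_{\Theta \to X}$, use $C_0(\Theta;\V^\di)^* \cong \M(\Theta;\V)$ to verify that $\widetilde{M}$ has the predual-pairing form $\widetilde{M}_j\nu = \braket{\nu}{\psi(x_j,\cdot)v_j^\di}_{C_0(\Theta;\V^\di)}$, apply Theorem~\ref{thm:representer_theorem_integral_RKBS} to the weight-space problem, and transport the solution to the function-space problem via the preceding theorem. The only difference is that you explicitly spell out the nested extreme-point characterization of the unit ball of $\M(\Theta;\V)$ (reapplying Werner/Bredies et al.\ one level down to obtain $u_m = \delta_{\theta_m}v_m$), a step the paper leaves implicit in its closing sentence.
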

\begin{proof}
Define $\widetilde{M} \colon \M(\Theta; \V) \to \R^d$ by $\widetilde{M} \coloneqq M\circ A_{\Theta \to X}$. With this definition, optimization problem \eqref{eq:opt_prob_weight_viewpoint_hypnet} turns into
\begin{equation}
    \min_{\mu \in \M(\Omega; \M(\Theta;\V))} \; \frac{1}{N} \sum_{n=1}^N L(\widetilde{M}( (A_{\Omega \to Z}\mu)(z_n))), y_n) + \lambda |\mu|_{\M(\Theta;\V)}(\Omega)
\end{equation}
Since $(\V^{\di})^* = \V$ and $\Theta$ is a locally-compact Hausdorff space, we have $C_0(\Theta; \V^{\di})^* \cong \M(\Theta; \V)$ by \eqref{eq:duality_C0_and_RadonMeasureSpace}. Moreover, by construction of $M$, 
\begin{equation}
    (\widetilde{M}\mu)_j = \braket{v_j^\di}{(A_{\Theta \to X}\mu(x_j)}_\V = \braket{\mu}{\psi(x_j, \cdot) v_j^\di}_{C_0(\Theta; \V^{\di})}
\end{equation}
where the final equality follows from the definition of $\braket{\cdot}{\cdot}_{C_0(\Theta; {\V^\di})}$ as the continuous extension of \eqref{eq:braket_integral_defining_equation} by uniform approximation with simple functions.

This shows that the measurement operator is of the form required for Theorem \ref{thm:representer_theorem_integral_RKBS}. The desired representation of the solutions, therefore, follows directly from it.
\end{proof}
\begin{remark}
    Note that the structure of the representer theorem resembles the DeepONet in \eqref{eq:deeponet_formula}. In particular, assume $Z \subseteq \R^{d_z}$ and $X \subseteq \R^{d_x}$ compact, $\Omega \subseteq \R^{d_z} \times \R$ and $\Theta \subseteq \R^{d_x} \times \R$ not necessarily compact. As in the neural vv-RKBS setting introduced in Definition \ref{def:vector_integral_RKBS}, we define
    \begin{equation}
        \phi \in C_0(Z \times \Omega),\quad \phi(z,(\omega,b)) \coloneqq \sigma_\phi(\braket{\omega}{z}_{\ell_2} + b) \beta_\phi((\omega,b)),
    \end{equation}
    and
    \begin{equation}
    \psi \in C_0(X \times \Theta),\quad \psi(x,(\theta,c)) \coloneqq \sigma_\psi(\braket{\theta}{x}_{\ell_2} + c) \beta_\psi((\theta,c)),
    \end{equation}
    for measurable activation functions $\sigma_\phi, \sigma_\psi \colon \mathbb R \to \mathbb R$ and measurable positive functions $\beta_\phi, \beta_\psi \colon \Omega \to \mathbb R$.
    
    In this setting, the hypernetwork provided by the representer theorem takes the form
    \begin{equation}
        \begin{split}
        f^\dagger(z)(x) & = \sum_{m=1}^{Nd} \sigma_\phi(\braket{\omega_m}{z}_{\ell_2} + b_m) \beta_\phi((\omega_m,b_m)) \sigma_\psi(\braket{\theta_m}{x}_{\ell_2} + c_m) \beta_\psi((\theta_m,c_m)) v_m \\
        & = \sum_{m=1}^{Nd} \sigma_\phi(\braket{\omega_m}{z}_{\ell_2} + b_m) \sigma_\psi(\braket{\theta_m}{x}_{\ell_2} + c_m) v_m
        \end{split}
    \end{equation}
    where the $\beta$ terms are absorbed in the $v_m$'s. 
    
    Here, the $\sigma_\phi(\braket{\omega_m}{z}_{\ell_2} + b_m)$ parts determine the coefficients and the $\sigma_\psi(\braket{\theta_m}{x}_{\ell_2} + c_m) v_m$ parts serve as the basis functions in the DeepONet formulation.
\end{remark}

\subsubsection{Discussion on function space formulation}
While we have adopted Definitions \ref{def:hypernetwork_RKBS_weight_form} and \ref{def:hypernetwork_RKBS_func_form} as the function spaces for both DeepONets and hypernetworks, we could also consider alternative formulations of the same spaces or consider completely different function spaces.

For an example of the former, assume $\V = \R$ for simplicity. In the situation of Definitions \ref{def:hypernetwork_RKBS_weight_form} and \ref{def:hypernetwork_RKBS_func_form} with an integral RKBS pair $(\B,\B^\di)$ with kernel $\psi:X\times \Theta \to \R$, consider an integral RKBS setup with measures $\xi \in \M(\Omega \times \Theta)$ representing functions $h:Z \times X \mapsto \R$ by
\begin{equation}\label{eq:product-repr}h(z,x)=\int_{\Omega \times \Theta} \phi(z,w)\psi(x,\theta) d\xi(w,\theta).\end{equation}
By disintegration (see Theorem 10.4.8 in Bogachev \cite{bogachev2007measure2} for an adequate version for signed measures) through the projection onto the first component in $\Omega \times \Theta$, there is a bijection between $\xi \in \M(\Omega \times \Theta)$ and $\mu \in \M(\Omega; \M(\Theta))$. This makes both representations coincide, that is $[f(z)](x)=[A_{\Theta \to X}( (A_{\Omega \to Z}\mu)(z))](x)=h(z,x)$ with $A_{\Theta \to X}$ defined as in \eqref{eq:vv_integral_RKBS_f} and $A_{\Omega \to Z}$ as defined in \eqref{eq:defAThetaX}. However, the constructions of Definitions \ref{def:hypernetwork_RKBS_weight_form} and \ref{def:hypernetwork_RKBS_func_form} lead to a clear distinction between the data points $z_i \in Z$ and observation points $x_j \in X$ for the measurement operator $M$ in the formulation of the optimization problems and the resulting Theorem \ref{thm:representer_hypernetwork}. We argue that this advantage makes them preferable to working directly with representations in the product space of parameters $\Omega \times \Theta$.

Such a correspondence also has parallels with the RKHS conditional mean embedding as introduced in a measure-theoretic framework in Definition 3.1 of Park and Muandet \cite{park2020measure}. Specifically, consider the underlying topological space $Z \times X$, a Borel probability measure $\xi \in \P(Z \times X)$, the projections $\Pi_Z:Z \times X \to Z$ and $\Pi_X:Z \times X \to X$, and $\H_Z,\H_X$ two RKHS with kernels $K_Z:Z \times Z \to \R$ and $K_X:X \times X \to \R$. Then, on the one hand we have the unconditional mean embedding
\begin{equation}\label{eq:mean-embedding}\mathbb{E}_{\xi}\big[K_Z(\Pi_Z, \cdot) K_X(\Pi_X, \cdot)\big],\end{equation}
which is analogous to the representation \eqref{eq:product-repr} and produces an element of the RKHS over $Z \times X$ with kernel $((z_1,x_1),(z_2,x_2)) \mapsto K_Z(z_1,z_2)K_X(x_1,x_2)$. Note that this kernel is positive semi-definite and its associated RKHS is linearly isometric to the tensor product $\H_Z \otimes \H_X$, see Theorem 5.11 of Paulsen and Raghupathi \cite{paulsen2016introduction}. On the other hand, conditioning with respect to the random variable $\Pi_Z$ gives rise to the conditional mean embedding
\begin{equation}\label{eq:CME}\mathbb{E}_{\xi}\big[K_X(\Pi_X, \cdot) \,\big|\, \Pi_Z\big].\end{equation}
This conditional expectation is a random variable $Z \to \H_X$, similarly to the function-space view of Definition \ref{def:hypernetwork_RKBS_func_form} in which we consider maps $Z \to \B$. However, unlike for \eqref{eq:mean-embedding}, in \eqref{eq:CME} there is no kernel involving the variable $z \in Z$, since this quantity doesn't measure a relation with other points in $Z$ and it also does not take into account the effect of parameters $w \in \Omega$.

While the previous discussion focused on reformulating Definitions \ref{def:hypernetwork_RKBS_weight_form} and \ref{def:hypernetwork_RKBS_func_form} in an equivalent way and relating this alternative to mean embeddings, one could also consider entirely different function spaces. For instance, assume $(\B, \B^\di)$ is an integral vv-RKBS pair with $\B \subset \{ u\colon X \to \V\}$, $\B^\di \subset \{ v\colon \Theta \to \V^\di\}$
and kernel $\psi \colon X \times \Theta \to \R$. For a parametrized family of Borel maps $F_z: \Omega \to \Theta$ indexed by $z \in Z$, one can define a pushforward-induced space as
\begin{equation}
    \B_F := \left\{ r \colon Z \to \B \,\middle\vert\, \big[r(z)\big](\cdot) = \int_\Theta \psi(\cdot, \theta) d \big[(F_z)_\#\nu\big](\theta)= \int_\Omega \psi(\cdot, F_z(w)) d \nu(w) \text{ for some }\nu \in \M(\Omega; \V)\right\},
\end{equation}
and endow it with the norm
\begin{equation}
    \|r\|_{\B_F} := \inf \left\{ |\nu|_{\V}(\Omega) \,\middle\vert\, \nu \in \M(\Omega; \V) \text{ satisfies }\big[r(z)\big](\cdot) = \int_\Theta \psi(\cdot, \theta) d \big[(F_z)_\#\nu\big](\theta) \text{ for all } z \in Z\right\}.
\end{equation}
It would be tempting to come up with a formalization of this type for the concept of hypernetworks, with the family $F_z$ determining the weights appearing in the $\B$ representation. In fact, the set $\B_F$ with the norm $\|\cdot\|_{\B_F}$ is a vector-valued reproducing kernel Banach space. To see that point evaluation is bounded, first, we notice that 
\begin{equation}\label{eq:PFdecreaseTV}|(F_z)_\#\nu|_{\V}(\Theta) \leq |\nu|_{\V}(\Omega)\quad\text{ for all }\nu \in \M(\Omega; \V)\text{ and all }z \in Z.\end{equation}
This follows directly from the definition of the total variations of $\nu$ and $(F_z)_\#\nu$, noticing that any partition of $\Theta= \bigcup_{i=1}^\infty \Theta_n$ induces a partition of $\Omega$ as $\Omega = \bigcup_{i=1}^\infty F_z^{-1}(\Theta_n)$.
Using \eqref{eq:PFdecreaseTV} and that $\B$ is part of an integral vv-RKBS pair, we have
\begin{equation}
    \|r(z)\|_{\B} = \inf \left\{ |\mu|_{\V}(\Theta) \,\middle\vert\, \mu \in \M(\Theta; \V) \text{ satisfies }\big[r(z)\big](\cdot) = \int_\Theta \psi(\cdot, \theta) d \mu(\theta)\right\} \leq \|r\|_{\B_F}.
\end{equation}
However, while Theorem \ref{thm:every_vv_RKBS_corresponds_to_adjoint_vv_RKBS_pair} guarantees the existence of an adjoint space $\B^\di_F$, there is no clear choice of $\B^\di_F$ that makes $(\B_F,\B^\di_F)$ an integral vv-RKBS pair, even if the maps $F_z$ arise from another integral RKBS. We attribute this to the fact that this construction, due to the composition with the functions $F_z$, is closer to a two-layer architecture than a single-layer one. Given these drawbacks, we argue that a more suitable generalization to deep networks would be a combination of the single-layer hypernetwork formalism of Definitions~\ref{def:hypernetwork_RKBS_weight_form} and \ref{def:hypernetwork_RKBS_func_form} with a principled approach dealing with deep architectures, such as the reproducing kernel chains proposed in Heeringa et al.\ \cite{heeringa2025deep}.

\section{Conclusion}
Mathematical analysis of neural networks helps reveal their underlying properties. In particular, studying the function spaces associated with neural networks provides insight into, for instance, how optimization within these spaces yields neural architectures, which algorithms are suited for solving the optimization problem, and what types of generalization guarantees can be given. Many of these neural network function spaces are instances of reproducing kernel Banach spaces (RKBS). While scalar-valued neural networks are well understood within this framework, the theory for $\R^d$-valued neural networks and neural operators is less developed.

To address this gap, we introduce a kernel-based framework for vector-valued RKBS (vv-RKBS) that extends existing constructions. Most known RKBS models for neural networks do not explicitly employ a kernel and instead only rely on the assumption that the point evaluation functionals are bounded. Among the methods that do incorporate a kernel, it is typically assumed to be given a priori.

We propose a new definition of an adjoint pair of vv-RKBS that explicitly incorporates a kernel. Within this framework, we show that every Banach space of functions with bounded point evaluations corresponds to such an adjoint pair, and hence admits an associated kernel. The adjoint pair consists of a dual pair $(\B, \B^\di)$ of Banach spaces of functions, both of which have bounded point evaluations. By combining this duality pairing with the kernel, we obtain a reproducing property and thereby complete the construction of the adjoint pair of vv-RKBS.

In this setting, the kernel is defined as
\begin{equation}
    K \colon X \times \Omega \to \twin(\U, \U^\di)
\end{equation}
where $(\U, \U^\di)$ is a dual pair. Here, $X$ and $\U$ are the domain and codomain of functions in $\B$, while $\Omega$ and $\U^\di$ are the domain and codomain of functions in $\B^\di$. The space $\twin(\mathcal U, \mathcal U^\di)$ generalizes $\mathcal L(\mathcal U)$ as used in the vv-RKHS construction. This kernel definition generalizes existing constructions by allowing asymmetric kernel domains, Banach space–valued outputs that may be infinite-dimensional, and not imposing structural assumptions such as reflexivity or separability on any of the involved spaces. The absence of reflexivity is important, since the neural network function spaces of interest are non-reflexive.

Within the adjoint pair of vv-RKBS framework, we show that many well-known properties of vv-RKHS naturally extend to our setting. In particular, every suitable kernel corresponds to an adjoint pair of vv-RKBS given sufficient structure on the dual pair $(\U, \U^\di)$, and every adjoint pair of vv-RKBS is equivalent to a scalar adjoint pair of vv-RKBS. 

To gain insight into neural networks, we further specialize the adjoint pair construction to integral and neural vv-RKBS, which are the function spaces most relevant to neural networks. We analyze their associated kernel structure and establish a general representer theorem. We then apply the integral and neural vv-RKBS framework to both $\R^d$-valued neural networks and neural operators. For the latter, our focus is on DeepONets and hypernetworks, which have the implicit neural representation (INR) structure underlying many neural operator approaches. We show that the hypernetwork space, which includes DeepONets, admits two complementary interpretations: a weight-space view and a function-space view. Both viewpoints lead to specific optimization problems that share a joint representer theorem and thus have a coinciding sparse solution.

Altogether, this work provides a unifying RKBS viewpoint on $\R^d$-valued neural networks and neural operators. It highlights how Banach-space structures can capture cases beyond Hilbert theory while retaining kernel-based tools, thereby opening the door to deeper mathematical understanding of modern neural network and operator learning methods.

\section{Future work}
Our work considers a general setting and mainly focuses on shallow neural networks rather than deep networks and operators. However, many state-of-the-art architectures are inherently deep. In Heeringa et al.\ \cite{heeringa2025deep}, chaining kernels yields a function space for deep neural networks. Since our work introduces vv-RKBS kernels, it would be natural to explore similar ideas in our setting, potentially leading to a notion of vv-RKBS kernel chaining.

Another direction for future work is related to representer theorems. In the vv-RKHS setting, we study functions $f \colon X \to \U$ taking values in a Hilbert space $\U$. Given data $\{(x_n, v_n)\}_{n=1}^N \subseteq X \times \U$ and a kernel
\begin{equation}
    K \colon X \times X \to \L(\U)
\end{equation}
the representer theorem ensures that the solution to the supervised optimization problem takes the form
\begin{equation}
    f^\dagger(x) = \sum_{n=1}^N K(\cdot, x_n) u_n, \quad u_n \in \U
\end{equation}
The vv-RKBS setting yields a weaker result. For integral and neural vv-RKBS pairs, we have a dual pair of Banach spaces $(\B, \B^\di)$ and $(\U, \U^\di)$, where the space $\B$ consists of functions $f \colon X \to \U$, and $\B^\di$ consists of functions $g \colon \Omega \to \U^\di$. The corresponding kernel is
\begin{equation}
    K \colon X \times \Omega \to \twin(\U, \U^\di)
\end{equation}
In this setting, instead of direct data $\{(x_n, v_n)\}_{n=1}^N \subseteq X \times \U$, we consider data $\{(x_n, y_n)\}_{n=1}^N \subseteq X \times \R^d$. Here, the $y_n$ are related to the $v_n$'s through a measurement operator $M \colon \U \to \R^d$ as $y_n = Mv_n$. Hence, the outputs are always mapped to finite-dimensional measurements for fitting.

For optimization problems over $f \in \B$ with such data $\{(x_n, y_n)\}_{n=1}^N \subseteq X \times \R^d$, the representer theorem then only guarantees representations of the form  
\begin{equation}
    f^\dagger(x) = \sum_{m=1}^{Nd} K_\U(\cdot, w_m) u_m
\end{equation}
Hence, the number of terms is not necessarily bounded by $N$. As argued in this paper, we postulate that this limitation arises because the regularizers used so far do not promote groupwise sparsity, unlike, for instance, the structured sparsity regularizers studied by Bach et al.\ \cite{bach2010structured}. Achieving such improvements would likely require a more explicit use of the vv-RKBS structure rather than general convexity arguments in topological vector spaces as in Bredies and Carioni \cite{bredies2020sparsity}, which in turn must rely on the use of finite-dimensional measurements.

Moreover, our representer theorems are tailored to integral and neural vv-RKBSs. Similar to the work of Wang et al.\ \cite{wang2024sparse}, which investigates when scalar RKBSs admit a representer theorem, it would be interesting to study under what conditions our (adjoint pair of) vv-RKBSs admit one. In particular, an important open question is what additional structure is needed to ensure representations with at most $N$ terms in the expansion. We expect that the groupwise sparsity mentioned above could play a central role.

\textit{\paragraph{Declaration of generative AI use.} In the final stages of the preparation of this work, the authors used AI tools to improve the readability of the manuscript. After using these tools, the authors reviewed and edited the content as needed and take full responsibility for the content of the article.}

\textbf{Funding.} \textit{Sven Dummer acknowledges financial support from EU EFRO OPoost (OOST-00103).}
\newpage

\appendix

\section{Notation}\label{sec:notation}
\begin{table}[h!t]
    \centering
    \begin{tabular}{|C{4cm}|m{11cm}|}
        \hline
         \textbf{\textit{\underline{Notation}}} & \textbf{\textit{\underline{Meaning}}} \\
         \hline \hline
         $|\cdot|_{B}$ & Given a Banach space $B$ with norm $\norm{\cdot}_B$, $|\cdot|_{B}$ denotes the total variation measure of a $B$-valued measure. \\
         \hline 
         $\Omega$ & Weight space of adjoint vv-RKBS pair of functions. Also the weights of the hypernetwork. \\
         \hline 
         $X$ & The input space of adjoint vv-RKBS pair of functions. Also used as the input of the base network in the hypernetwork setting. \\
         \hline 
         $Z$ & The input space of the hypernetwork. \\
         \hline 
         $\Theta$ & The weight space of the base network in the hypernetwork setting. \\
         \hline 
         $\U$ & The output Banach space of a vv-RKBS. \\
         \hline 
         $\U^\di$ & The output Banach space of the adjoint vv-RKBS in an adjoint vv-RKBS pair. \\
         \hline 
         $\B$ & A vv-RKBS. \\
         \hline 
         $\B^\di$ & The adjoint vv-RKBS in an adjoint vv-RKBS pair. \\
         \hline
         $C_0(X; \U)$ & Assuming $X$ is a locally-compact topological space, $f \in C_0(X; \U)$ if it is continuous and for any positive number $\varepsilon >0$ there exists a compact subset $K \subseteq X$ such that $\norm{f(x)} \leq \varepsilon$ for all $x \in X \setminus K$.  \\
         \hline 
         $\M(\Omega)$ & The set of regular signed Radon measures with finite total variation over $\Omega$ equipped with the Borel $\sigma$-algebra. \\
         \hline 
         $\M(\Omega; \V)$ & The set of regular countably additive $\V$-valued vector measures with finite total variation over $\Omega$ equipped with the Borel $\sigma$-algebra. By definition, $\mu \in \M(\Omega; \V)$ is regular if and only if $|\mu|_\V$ is regular.\\
         \hline 
         $\phi \colon X \times \Omega \to \R$ & the integrand for defining the integral / neural vv-RKBS (pairs). \\
         \hline 
         $A_{\Omega \to X}$ & $A_{\Omega \to X} \mu = \int_\Omega \phi(\cdot,w) d \mu(w)$ with $\mu \in \M(\Omega; \U)$. Hence, it means integrating out the $\Omega$ part to get a function of $x \in X$. The Banach space $\U$ is arbitrary. \\
         \hline 
         $\L(\U; \V)$ / $\L(\U)$ & The spaces of bounded linear operators from $\U$ to $\V$ and from $\U$ to $\U$, respectively. \\
         \hline 
         $f$ and $g$ & functions from the vv-RKBS and the adjoint vv-RKBS, respectively. \\
         \hline 
         $\braket{\cdot}{\cdot}_{B^p} = \braket{\cdot}{\cdot}_{B^d,B^p}$ & the dual pairing for a dual pair of Banach spaces $(B^p, B^d)$. Such pairings appear both for explicitly prescribed pairs $(B,B^\di)$ used in the RKBS definitions as well as for the canonical pair $(B,B^\ast)$ with the continuous dual $B^\ast$. In case $B$ is a Hilbert space, the pairing can always be considered as the inner product. The abbreviated notation is used when the dual space $B^d$ is clear from the context, with the subscript denoting the space of the primal element in the second argument. \\
         \hline
         $\mathcal{N}(A)$ & denotes the nullspace of a linear operator $A$. \\
         \hline
         $\twin(\U, \U^\di)$ & the space of twin operators for a dual pair of Banach spaces $(\U, \U^\di)$. \\
         \hline
         $T_\U$ and $T_{\U^\di}$ & the $T_\U$ and $T_{\U^\di}$ operators used in the definition of the space of twin operators. \\
         \hline
         $K \colon X \times \Omega \to \twin(\U, \U^\di)$ & the kernel corresponding to an adjoint pair of vv-RKBS. \\
         \hline 
         $K_\U \colon X \times \Omega \to \L(\U)$ and $K_{\U^\di} \colon X \times \Omega \to \L(\U^\di)$ & the linear operators used in the definition of the space of twin operators corresponding to the twin operator $K(x, w)$. \\
         \hline
         $M:\U \to \R^d$ & Measurement operator used in supervised learning problems.\\
         \hline
    \end{tabular}
    \caption{Meaning of frequently used mathematical objects.}
    \label{tab:notations}
\end{table}
\newpage
\section{Additional proofs and theorems}\label{sec:additionalresults}

\begin{theorem}\label{thm:existence_braket_measure}
    Let $\Sigma_X$ be a $\sigma$-algebra on $X$ and $\Sigma_\Omega$ a $\sigma$-algebra on $\Omega$. Let $(\U, \U^\di)$ be a dual pair of Banach spaces with a continuous dual pairing; that is, there exists a $C > 0$ such that
    \begin{equation}
        |\braket{u^\di}{u}_\U| \leq C \norm{u^\di}_{\U^\di} \norm{u}_\U.
    \end{equation}
    Then for $\mu \in \M(\Omega; \U)$ and $\rho \in \M(X; \U^\di)$, there exists a unique measure $\braket{\rho}{\mu}_\U \in \M(X \times \Omega)$ defined over the $\sigma$-algebra $\sigma(\Sigma_X \times \Sigma_\Omega)$ generated by $\Sigma_X \times \Sigma_\Omega$ such that 
    \begin{equation}
        \braket{\rho}{\mu}_\U(A \times B) = \braket{\rho(A)}{\mu(B)}_\U, \quad \text{for all } A \in \Sigma_X, B\in \Sigma_\Omega.
    \end{equation}
    Moreover, $\braket{\rho}{\mu}_\U$ is of bounded variation, with 
    \begin{equation}
        |\braket{\rho}{\mu}_\U|(X \times \Omega) \leq C |\rho|_{\U^\di}(X) |\mu|_\U(\Omega).
    \end{equation}
\end{theorem}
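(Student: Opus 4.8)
The plan is to build the target scalar set function on measurable rectangles, extend it additively to the algebra they generate, verify it is a bounded signed pre-measure, and finally invoke an extension theorem. First I would set $\lambda_0(A\times B) \coloneqq \braket{\rho(A)}{\mu(B)}_\U$ on rectangles $A\times B$ with $A\in\Sigma_X$, $B\in\Sigma_\Omega$, and extend it additively to the algebra $\mathcal A$ generated by the measurable rectangles, whose elements are finite disjoint unions of rectangles. Well-definedness and finite additivity on $\mathcal A$ both follow from a common grid-refinement argument: given any finite family of rectangles, the $X$-sides generate a finite partition $\{P_i\}$ and the $\Omega$-sides a finite partition $\{Q_j\}$, and bilinearity of $\braket{\cdot}{\cdot}_\U$ together with finite additivity of $\rho$ and $\mu$ gives $\braket{\rho(A)}{\mu(B)}_\U = \sum_{i,j}\braket{\rho(P_i)}{\mu(Q_j)}_\U$ whenever $A\times B = \bigsqcup_{i,j} P_i\times Q_j$. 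Hence the value on any algebra set equals the grid sum over the cells it contains, which is independent of the chosen representation.

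Next I would record the variation bound in the form that drives everything else. For $F\in\mathcal A$ written as a finite grid $F = \bigsqcup_{(i,j)\in S} P_i\times Q_j$, the continuity of the pairing together with $\norm{\rho(P_i)}_{\U^\di}\le |\rho|_{\U^\di}(P_i)$ and $\norm{\mu(Q_j)}_\U\le|\mu|_\U(Q_j)$ yields
\[
|\lambda_0(F)| \le C\sum_{(i,j)\in S}\norm{\rho(P_i)}_{\U^\di}\norm{\mu(Q_j)}_\U \le C\,(|\rho|_{\U^\di}\otimes|\mu|_\U)(F),
\]
where $|\rho|_{\U^\di}\otimes|\mu|_\U$ is the genuine finite product measure of the two finite scalar total-variation measures. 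Taking suprema over grid partitions, which dominate arbitrary algebra partitions since refinement only increases variation sums, gives $|\lambda_0|(X\times\Omega)\le C|\rho|_{\U^\di}(X)|\mu|_\U(\Omega)<\infty$, so $\lambda_0$ is of bounded variation on $\mathcal A$.

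The crux is upgrading finite additivity to countable additivity on $\mathcal A$, i.e.\ the pre-measure property, and here the domination estimate does the work. It suffices to prove continuity from above at the empty set: if $F_k\in\mathcal A$ with $F_k\downarrow\emptyset$, then $|\lambda_0(F_k)|\le C(|\rho|_{\U^\di}\otimes|\mu|_\U)(F_k)\to 0$, because the product measure is finite and hence continuous from above. For a finite, finitely additive set function on an algebra this is equivalent to countable additivity. To extend to $\sigma(\Sigma_X\times\Sigma_\Omega)$ despite $\lambda_0$ being signed, I would pass to its variation $|\lambda_0|$ (nonnegative, finitely additive, likewise dominated by $C(|\rho|_{\U^\di}\otimes|\mu|_\U)$ and therefore continuous from above at $\emptyset$), form the nonnegative pre-measures $\lambda_0^{\pm} = \tfrac12(|\lambda_0|\pm\lambda_0)$, and apply the Hahn-Kolmogorov theorem to each to obtain finite measures $\lambda^{\pm}$. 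Their difference $\braket{\rho}{\mu}_\U \coloneqq \lambda^+-\lambda^-$ agrees with $\lambda_0$ on rectangles, and its total variation is at most $\lambda^+(X\times\Omega)+\lambda^-(X\times\Omega)=|\lambda_0|(X\times\Omega)\le C|\rho|_{\U^\di}(X)|\mu|_\U(\Omega)$, which is the claimed bound. Uniqueness is a standard $\pi$-$\lambda$ argument: rectangles form a $\pi$-system generating $\sigma(\Sigma_X\times\Sigma_\Omega)$, so two finite signed measures agreeing there coincide.

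I expect the only genuinely delicate point to be the countable additivity on the algebra: without the a priori domination by the finite product measure one would have to argue directly with countable grids of rectangles covering a single rectangle, which is awkward for a merely bilinear object that is not yet known to be a measure. Routing everything through $|\lambda_0(\cdot)|\le C(|\rho|_{\U^\di}\otimes|\mu|_\U)(\cdot)$ sidesteps this and simultaneously supplies the bounded-variation estimate, so the two issues flagged after Definition~\ref{def:vector_integral_RKBS}, namely signedness and finiteness of the variation, are resolved together.
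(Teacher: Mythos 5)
Your proposal is correct, and its overall architecture coincides with the paper's: define the set function on the algebra generated by measurable rectangles, establish the pre-measure property, split into the nonnegative parts $\tfrac12(|\lambda_0|\pm\lambda_0)$, apply Hahn--Kolmogorov to each, take the difference, prove uniqueness via a $\pi$-$\lambda$ (Dynkin) argument, and obtain the total-variation bound from the variation on the algebra, exactly as in the paper. The genuine difference is the key step, countable additivity on the algebra. The paper argues directly: it writes a rectangle $A\times B$ as a countable disjoint union of rectangles $A_i\times B_i$, forms the partial unions $A_n=\bigcup_{i\le n}A_i$, $B_n=\bigcup_{i\le n}B_i$, and controls the discrepancy using countable additivity of $\rho$ and $\mu$ together with continuity of the pairing, afterwards extending from rectangles to general algebra elements by a combinatorial refinement. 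You instead prove the domination $|\lambda_0(F)|\le C\,\big(|\rho|_{\U^\di}\otimes|\mu|_\U\big)(F)$ for every algebra set $F$, where $|\rho|_{\U^\di}\otimes|\mu|_\U$ is the classical finite product measure, and deduce continuity from above at $\emptyset$, which for a finite, finitely additive set function on an algebra is equivalent to $\sigma$-additivity. This buys two things: it handles rectangles and general algebra sets in one stroke, and it is more robust. Indeed, the paper's direct computation tacitly identifies $\sum_{i\le n}\braket{\rho}{\mu}_\U(A_i\times B_i)$ with $\braket{\rho}{\mu}_\U(A_n\times B_n)$, which silently discards the cell $(A_n\times B_n)\setminus\bigcup_{i\le n}(A_i\times B_i)$; your domination argument never needs any such identification, so it sidesteps precisely the delicate point you flagged. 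The price is an appeal to the existence and continuity from above of the product of the two (countably additive, finite) total-variation measures, a fact the paper only invokes for the norm estimate rather than for $\sigma$-additivity; both routes then yield the bound $|\braket{\rho}{\mu}_\U|(X\times\Omega)\le C\,|\rho|_{\U^\di}(X)\,|\mu|_\U(\Omega)$ by essentially the same computation.
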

\begin{proof}
    Let $\mathcal{R}$ denote the algebra generated by sets of the form $A \times B$ with $A \in \Sigma_X$ and $B\in \Sigma_\Omega$. Elements of $\mathcal{R}$ are finite disjoint unions of such rectangles, i.e., sets of the form $\bigcup_{i=1}^n A_i \times B_i$ with $A_i \times B_i \in \Sigma_X \times \Sigma_\Omega$ pairwise disjoint and $n < \infty$. We define a set function on $\mathcal{R}$ by
    \begin{equation}
        \braket{\rho}{\mu}_\U\left(\bigcup_{i=1}^n A_i \times B_i\right) = \sum_{i=1}^n \braket{\rho}{\mu}_\U(A_i \times B_i) = \sum_{i=1}^n \braket{\rho(A_i)}{\mu(B_i)}_\U
    \end{equation}
    which is well-defined independently of the chosen pairwise disjoint decomposition.

    Before applying the Hahn-Kolmogorov extension theorem, we show that $\braket{\rho}{\mu}_\U$ is countably additive on $\mathcal{R}$. First, consider a rectangle $A \times B$ expressed as a countable union of pairwise disjoint rectangles $A_i \times B_i$. Using the partial unions $A_n \coloneqq \bigcup_{i=1}^n A_i$ and $B_n \coloneqq \bigcup_{i=1}^n B_i$, we can decompose $A\times B$ as  
    \begin{equation}
        A \times B = (A_n \times B_n) \cup (A_n \times (B \setminus B_n)) \cup ((A \setminus A_n) \times B_n) \cup ((A \setminus A_n) \times (B \setminus B_n)).
        \label{eq:decomp_of_A_times_B}
    \end{equation}
    This is a decomposition into disjoint sets. Then consider
    \begin{equation}
        \left|\left(\sum_{i=1}^n \braket{\rho}{\mu}_\U(A_i \times B_i) \right) - \braket{\rho}{\mu}_\U(A \times B)  \right|
    \end{equation}
    and note that using \eqref{eq:decomp_of_A_times_B} it can be rewritten to
    \begin{equation}
    \begin{split}
    & =  \left|\braket{\rho}{\mu}_\U (A_n \times (B \setminus B_n)) + \braket{\rho}{\mu}_\U((A \setminus A_n) \times B_n) + \braket{\rho}{\mu}_\U((A \setminus A_n) \times (B \setminus B_n)) \right| \\
        &  = \left|\braket{\rho\left(A_n\right)}{\mu\left(B \setminus B_n\right)}_\U + \braket{\rho\left(A \setminus A_n\right)}{\mu\left(B_n\right)}_\U + \braket{\rho\left(A \setminus A_n\right)}{\mu\left(B \setminus B_n\right)}_\U\right| \\
        & \leq C \left(\norm{\rho\left(A_n\right)}_{\U^\di} \norm{\mu\left(B \setminus B_n\right)}_{\U} + \norm{\rho\left(A_n\right)}_{\U^\di} \norm{\mu\left(B \setminus B_n\right)}_{\U} + \norm{\rho\left(A \setminus A_n\right)}_{\U^\di} \norm{\mu\left(B \setminus B_n\right)}_{\U} \right) 
    \end{split}\label{ineq:proof_count_addit}
    \end{equation}
    By countable additivity of $\rho$ and $\mu$, we have $\rho(A_n) \to \rho(A)$ and $\mu(B_n) \to \mu(B)$, which shows that each bounding term in \eqref{ineq:proof_count_addit} goes to zero. Hence, $\braket{\rho}{\mu}_\U$ is countably additive on rectangles $A \times B$.

    To extend countable additivity to all of $\mathcal{R}$, let $C = \bigcup_{n=1}^\infty D_n \in \mathcal{R}$  be a countable union of disjoint sets $D_n \in \mathcal{R}$. As any set in $\mathcal{R}$ can be written as a finite disjoint union of rectangles, the sets $C$ and $D_n$ can be written as $C = \bigcup_{j=1}^N C_j$ and $D_n = \bigcup_{i=1}^{M_n} D_{n,i}$ with both $\{C_j\}_{j=1}^N$ and $\{D_{n, i}\}_{i=1}^{M_n}$ finite sets of disjoint rectangles. Given that the $D_n$ are pairwise disjoint, define the pairwise disjoint rectangles $D_{n, i, j} \coloneqq D_{n,i} \cap C_j$. Then
    \begin{equation}
    \begin{split}
        \braket{\rho}{\mu}_\U(C) & = \sum_{j=1}^N \braket{\rho}{\mu}_\U(C_j) = \sum_{j=1}^N  \sum_{n=1}^\infty \sum_{i=1}^{M_n}\braket{\rho}{\mu}_\U(D_{n, i, j}) = \sum_{n=1}^\infty \sum_{i=1}^{M_n} \sum_{j=1}^N  \braket{\rho}{\mu}_\U(D_{n, i, j}) \\
        & = \sum_{n=1}^\infty \sum_{i=1}^{M_n} \braket{\rho}{\mu}_\U(D_{n,i}) = \sum_{n=1}^\infty \braket{\rho}{\mu}_\U(D_n),
    \end{split}
    \end{equation}
    which proves countable additivity on $\mathcal{R}$.  

    Now we construct the extension. Let
    \begin{equation}
        \nu_\mathcal{R}^{+}  = 0.5 \left(\left|\braket{\rho}{\mu}_\U\right| + \braket{\rho}{\mu}_\U \right), \qquad
        \nu_\mathcal{R}^{-}  = 0.5 \left(\left|\braket{\rho}{\mu}_\U\right| - \braket{\rho}{\mu}_\U \right), 
    \end{equation}
    with $\left|\braket{\rho}{\mu}_\U\right|$ the total variation on $\mathcal{R}$. These are well-defined and finite by the following total variation estimate, 
    \begin{equation}
        \begin{aligned}
            |\braket{\rho}{\mu}_\U|(X \times \Omega) 
            & = \sup_{\set{A_\bullet \times B_\bullet}} \sum_i |\braket{\rho}{\mu}_\U(A_i \times B_i)| \\&= \sup_{\set{A_\bullet \times B_\bullet}} \sum_i |\braket{\rho(A_i)}{\mu(B_i)}_\U| \\
            & \leq C \sup_{\set{A_\bullet \times B_\bullet}} \sum_i \norm{\rho(A_i)}_{\U^\di}\norm{\mu(B_i)}_\U \\
            &\leq C \sup_{\set{A_\bullet \times B_\bullet}} \sum_i |\rho|_{\U^\di}(A_i)|\mu|_\U(B_i) \\
            &= C \sup_{\set{A_\bullet \times B_\bullet}} \sum_i \left(|\rho|_{\U^\di}
            \otimes|\mu|_\U\right)(A_i\times B_i) \\
            & = C \left(|\rho|_{\U^\di}\otimes|\mu|_{\U}\right)(X\times\Omega) \\
            & = C |\rho|_{\U^\di}(X) |\mu|_{\U}(\Omega)
        \end{aligned}
        \label{ineq:tv_ineq_extension}
    \end{equation}
    where the suprema run over all the partitions $\set{A_\bullet \times B_\bullet}$ of $X\times\Omega$.
    Moreover, since $\braket{\rho}{\mu}_\U$ is countably additive and of bounded variation, its total variation $|\braket{\rho}{\mu}_\U|$ is countably additive on $\mathcal{R}$. All together, $\nu_\mathcal{R}^+$ and $\nu_\mathcal{R}^-$ are nonnegative, finite, countably additive set functions on $\mathcal{R}$, and by the Hahn-Kolmogorov extension theorem (see Theorem 1.7.8 in Tao \cite{tao2011introduction}), these extend to countable additive measures $\nu^+$ and $\nu^{-}$ on $\sigma(\Sigma_X \times \Sigma_\Omega)$. Defining $\nu = \nu^+ - \nu^-$, we obtain a finite signed measure such that
    \begin{equation}
        \nu(A \times B) = \braket{\rho(A)}{\mu(B)}_\U, \quad \text{for all } A\in \Sigma_X, B \in \Sigma_\Omega.
    \end{equation}

    Uniqueness of the extension $\nu$ follows from Dynkin's lemma. If $\widetilde{\nu}$ is another extension, define
    \begin{equation}
        \mathcal{D} \coloneqq \{ S \in \sigma(\Sigma_X \times \Sigma_\Omega) \mid \widetilde{\nu}(S) = \nu(S) \}.
    \end{equation}
    Note that $X \times \Omega \in \mathcal{D}$. Since $\nu$ is finite, for any $A \subseteq B$ with $A, B \in \mathcal{D}$, we have
    \begin{equation}
        \nu(B \setminus A) = \nu(B) - \nu(A) = \widetilde{\nu}(B) - \widetilde{\nu}(A) = \widetilde{\nu}(B \setminus A)
    \end{equation}
    which shows that $B \setminus A \in \mathcal{D}$. Next, consider an increasing sequence $A_1 \subseteq A_2 \subseteq A_3 \subseteq \ldots$ with $A_n \in \mathcal{D}$. Define
    \begin{equation}
        \widetilde{A}_n \coloneqq A_n \setminus \bigcup_{i=1}^{n-1} A_i = A_n \setminus A_{n-1}, \qquad \widetilde{A}_1 = A_1.
    \end{equation}
    Then $\widetilde{A}_1 = A_1 \in \mathcal{D}$ and by
    \begin{equation}
        \nu(\widetilde{A}_n) = \nu(A_n) - \nu(A_{n-1}) = \widetilde{\nu}(A_n) - \widetilde{\nu}(A_{n-1}) = \widetilde{\nu}(\widetilde{A}_n) 
    \end{equation}
    we have $\widetilde{A}_n \in \mathcal{D}$ for $n \geq 2$. By countable additivity, we have
    \begin{equation}
        \nu\left(\bigcup_{n=1}^\infty A_n \right) = \nu\left(\bigcup_{n=1}^\infty \widetilde{A}_n \right) = \sum_{n=1}^\infty \nu(\widetilde{A_n}) = \sum_{n=1}^\infty \widetilde{\nu}(\widetilde{A_n}) =  \widetilde{\nu}\left(\bigcup_{n=1}^\infty \widetilde{A}_n \right) =  \widetilde{\nu}\left(\bigcup_{n=1}^\infty A_n \right)  
    \end{equation}
    These properties verify that $\mathcal{D}$ is a Dynkin system containing the $\pi$-system $\Sigma_X \times \Sigma_\Omega$. Hence, by Dynkin's Lemma 
    \begin{equation}
        \sigma(\Sigma_X \times \Sigma_\Omega) \subseteq \mathcal{D} \subseteq \sigma(\Sigma_X \times \Sigma_\Omega) \implies \mathcal{D} = \sigma(\Sigma_X \times \Sigma_\Omega)
    \end{equation}
    It follows that $\widetilde{\nu} = \nu$, and hence $\nu$ is the unique extension.

    Finally, we show that $\nu$ is of bounded variation:
    \begin{equation}
    \begin{split}
        |\nu|(X \times \Omega) & = |\nu^+ - \nu^-|(X \times \Omega) \leq |\nu^+|(X \times \Omega) + | \nu^-|(X \times \Omega) = \nu^+(X \times \Omega) + \nu^-(X \times \Omega) \\
        & = 0.5\left(|\braket{\rho}{\mu}_\U|(X \times \Omega) + \braket{\rho}{\mu}_\U(X \times \Omega)\right) + 0.5\left(|\braket{\rho}{\mu}_\U|(X \times \Omega) - \braket{\rho}{\mu}_\U(X \times \Omega)\right) \\
        & = |\braket{\rho}{\mu}_\U|(X \times \Omega) \leq C |\rho|_{\U^\di}(X) |\mu|_{\U}(\Omega)
    \end{split}
    \end{equation}
    where the final inequality follows from \eqref{ineq:tv_ineq_extension}. 
\end{proof}

\begin{lemma}
    Let $X$ and $\Omega$ be topological spaces and $\phi \in C_0(X \times \Omega)$. Then for any $\delta > 0$ and $\varepsilon > 0$, there exist:
    \begin{itemize}
        \item pairwise disjoint Borel sets $A_1, \dots, A_n \subseteq X$ with $\bigcup_{i=1}^n A_i = X$,
        \item pairwise disjoint Borel sets $B_1, \dots, B_m \subseteq \Omega$ with $\bigcup_{j=1}^m B_j = \Omega$,
        \item a Borel set $D_\delta \subseteq X \times \Omega$, 
    \end{itemize}
    such that either:
    \begin{itemize}
        \item $A_i \times B_j \subseteq D_\delta$ and $|\phi(x,w) - \phi(\widetilde{x}, \widetilde{w})| \leq \epsilon$ for all $(x,w), (\widetilde{x}, \widetilde{w}) \in A_i \times B_j$, or
        \item $A_i \times B_j \subseteq \left( X \times \Omega \right) \setminus D_\delta$ and $|\phi(x,w)| \leq \delta$ for all $(x,w) \in A_i \times B_j$.
    \end{itemize}
    \label{lemma:C0_domain_decomposition}
\end{lemma}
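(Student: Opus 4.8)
The plan is to combine the vanishing-at-infinity property of $\phi$ with a compactness argument and then to promote an arbitrary rectangular cover into a genuine product grid by passing to the finite Boolean algebras generated by finitely many open sets. First I would invoke the $C_0$ hypothesis in the form recorded in the notation table: for the given $\delta > 0$ there is a compact set $K \subseteq X \times \Omega$ such that $|\phi(x,w)| \leq \delta$ for every $(x,w) \notin K$. This furnishes the region on which the second alternative (smallness of $|\phi|$) will hold, while all the oscillation control will be needed only on $K$.

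Next, on $K$ I would bound the oscillation of $\phi$ using \emph{only} continuity and the product topology, which is what makes the statement valid for arbitrary topological spaces. For each point $p = (x_p, w_p) \in K$, continuity of $\phi$ at $p$ together with the fact that the rectangles $U \times V$ (with $U$ open in $X$ and $V$ open in $\Omega$) form a basis of the product topology yields open $U_p \ni x_p$ and $V_p \ni w_p$ with $|\phi(q) - \phi(p)| < \varepsilon/2$ for all $q \in U_p \times V_p$; by the triangle inequality $\phi$ then oscillates by at most $\varepsilon$ on $U_p \times V_p$. Compactness of $K$ lets me extract a finite subcover $U_1 \times V_1, \dots, U_N \times V_N$. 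No metric or uniform structure is invoked anywhere.

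The heart of the argument is converting this overlapping rectangular cover into a product partition. I would let $\{A_i\}_{i=1}^n$ be the atoms of the finite Boolean algebra generated by $U_1, \dots, U_N$ inside $X$, and $\{B_j\}_{j=1}^m$ the atoms of the Boolean algebra generated by $V_1, \dots, V_N$ inside $\Omega$. Each atom is a finite intersection of the (open, hence Borel) sets $U_k$ and their complements, so it is Borel; the atoms are pairwise disjoint and cover the whole space; and the key property is that each atom is either contained in $U_k$ or disjoint from $U_k$, for every $k$ (and analogously for the $B_j$ and $V_k$). Consequently the rectangles $\{A_i \times B_j\}$ partition $X \times \Omega$, and each is either contained in some $U_k \times V_k$ or meets no covering rectangle at all.

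Finally I would set $D_\delta$ to be the union of those rectangles $A_i \times B_j$ that are contained in some $U_k \times V_k$. For such rectangles the oscillation bound $\leq \varepsilon$ is immediate, giving the first alternative. For a rectangle $A_i \times B_j$ contained in no $U_k \times V_k$, I claim it is disjoint from $K$: if it held a point of $K$, that point would lie in some $U_k \times V_k$, forcing $A_i \subseteq U_k$ and $B_j \subseteq V_k$ by the atom property and hence $A_i \times B_j \subseteq U_k \times V_k$, a contradiction. Thus such rectangles lie in $(X \times \Omega)\setminus K$, where $|\phi| \leq \delta$, giving the second alternative. The step I expect to require the most care is precisely this last dichotomy — the passage from the finite rectangular cover to a Borel product grid that respects both alternatives — and it is the atom construction that makes it clean. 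The degenerate case $K = \emptyset$ (for instance when $|\phi| \leq \delta$ already holds everywhere) is handled trivially by taking $n = m = 1$, $A_1 = X$, $B_1 = \Omega$, and $D_\delta = \emptyset$.
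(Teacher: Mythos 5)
Your proposal is correct and follows essentially the same route as the paper's proof: extract a compact set from the $C_0$ hypothesis, cover it by finitely many basic open rectangles on which $\phi$ oscillates by at most $\varepsilon$ (via continuity and a triangle inequality through the center points), pass to the atoms of the Boolean algebras generated by the first and second factors of these rectangles, and conclude with the same containment-or-disjointness dichotomy. The only cosmetic difference is that the paper takes $D_\delta$ to be the union of the covering rectangles $U_k \times V_k$ while you take the union of the grid rectangles $A_i \times B_j$ contained in some $U_k \times V_k$; these two sets coincide by the atom property, so the proofs are interchangeable.
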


\begin{proof}
    Since $\phi \in C_0(X \times \Omega)$, for every $\delta > 0$ there exists a compact set $\widetilde{D}_\delta \subseteq X \times \Omega$ such that $|\phi(x,w)| \leq \delta$ outside $\widetilde{D}_\delta$. By continuity, for each $(x,w) \in \widetilde{D}_\delta$, there exists an open neighborhood $O_{x,w}$ with $|\phi(x, w) - \phi(\widetilde{x}, \widetilde{w})| \leq \varepsilon/2$ for all $(\widetilde{x}, \widetilde{w}) \in O_{x,w}$. Since open rectangles form a basis for the product topology, we may assume $O_{x,w} = \widetilde{A}_{x,w} \times \widetilde{B}_{x,w}$ for open sets $\widetilde{A}_{x,w} \subseteq X$ and $\widetilde{B}_{x,w} \subseteq \Omega$.

    The collection $\{\widetilde{A}_{x,w} \times \widetilde{B}_{x,w}\}_{(x,w) \in \widetilde{D}_\delta}$ is an open cover of the compact set $\widetilde{D}_\delta$, so it has a finite subcover $\{\widetilde{A}_i \times \widetilde{B}_i\}_{i=1}^{n_c} \coloneqq \{\widetilde{A}_{x_i,w_i} \times \widetilde{B}_{x_i,w_i}\}_{i=1}^{n_c}$, with $(x_i, w_i) \in \widetilde{A}_i \times \widetilde{B}_i$, and
    \begin{equation}
    |\phi(x_i, w_i) - \phi(x, w)| \leq \varepsilon/2 \quad \text{for all } (x,w) \in \widetilde{A}_i \times \widetilde{B}_i.
    \end{equation}
    
    Define 
    \begin{equation}
        D_\delta \coloneqq \bigcup_{i=1}^{n_c} \widetilde{A}_i \times \widetilde{B}_i \supseteq \widetilde{D}_\delta
    \end{equation}
    Moreover, define the finite collections
    \begin{equation}
    \mathcal{S}_A := \left\{ \bigcap_{i=1}^{n_c} E_i \ \middle|\ E_i \in \{\widetilde{A}_i, \widetilde{A}_i^c\} \right\} \setminus \{\emptyset\}, \quad
    \mathcal{S}_B := \left\{ \bigcap_{i=1}^{n_c} F_i \ \middle|\ F_i \in \{\widetilde{B}_i, \widetilde{B}_i^c\} \right\} \setminus \{\emptyset\}.
    \end{equation}
    Each $A \in \mathcal{S}_A$ is either fully contained in or disjoint from each $\widetilde{A}_i$, and similarly for $B \in \mathcal{S}_B$ and each $\widetilde{B}_i$. For any $A \in \mathcal{S}_A$ and $B \in \mathcal{S}_B$, define
    \begin{equation}
    I_{A,B} := \{ i \in \{1, \dots, n_c\} \mid A \subseteq \widetilde{A}_i,\, B \subseteq \widetilde{B}_i \}.
    \end{equation}
    \begin{itemize}
    \item If $I_{A,B} \neq \emptyset$, then $A \times B \subseteq \widetilde{A}_i \times \widetilde{B}_i \subseteq D_\delta$ for some $i$, and for all $(x,w), (\widetilde{x}, \widetilde{w}) \in A \times B$,
    \begin{equation}
    |\phi(x,w) - \phi(\widetilde{x}, \widetilde{w})| \leq |\phi(x,w) - \phi(x_i, w_i)| + |\phi(x_i, w_i) - \phi(\widetilde{x}, \widetilde{w})| \leq \varepsilon.
    \end{equation}
    \item If \( I_{A,B} = \emptyset \), then \( A \times B \) is disjoint from all sets \( \widetilde{A}_i \times \widetilde{B}_i \). Since  
    \begin{equation}
    \bigcup_i \widetilde{A}_i \times \widetilde{B}_i = D_\delta ,
    \end{equation}  
    it follows that  
    \begin{equation}
    A \times B \subseteq (X \times \Omega) \setminus D_\delta \subseteq (X \times \Omega) \setminus \widetilde{D}_\delta ,
    \end{equation}  
    where the last inclusion holds because \( \widetilde{D}_\delta \subseteq D_\delta \). Consequently, \( |\phi(x,w)| \le \delta \) on \( A \times B \).
    \end{itemize}
    Let $\mathcal{S}_A = (A_1, \dots, A_n)$ and $\mathcal{S}_B = (B_1, \dots, B_m)$ be ordered versions of the previously defined collections. $A_i$ are pairwise disjoint with $\bigcup_{i=1}^n A_i = X$, and the sets $B_j$ are pairwise disjoint with $\bigcup_{j=1}^m B_j = \Omega$. Together with $D_\delta$, they satisfy the conditions stated in the theorem.
\end{proof}

\begin{theorem}\label{thm:simple_approximation_of_C0_functions}
    Let $X$ and $\Omega$ be topological spaces and $\phi \in C_0(X \times \Omega)$. Then for any $\varepsilon > 0$, there exist:
    \begin{itemize}
        \item pairwise disjoint Borel sets $A_1, \dots, A_n \subseteq X$ with $\bigcup_{i=1}^n A_i = X$,
        \item pairwise disjoint Borel sets $B_1, \dots, B_m \subseteq \Omega$ with $\bigcup_{j=1}^m B_j = \Omega$,
    \end{itemize}
    and a simple function of the form
    \begin{equation}
        \phi_{\varepsilon}(x,w) = \sum_{i=1}^n \sum_{j=1}^m a_{ij} \mathbbm{1}_{A_i \times B_j}(x,w) 
    \end{equation}
    such that 
    \begin{equation}
        \sup_{(x,w) \in X \times \Omega} \left| \phi_{\varepsilon}(x,w) - \phi(x,w) \right| \leq \varepsilon
    \end{equation}
\end{theorem}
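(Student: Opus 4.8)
The plan is to invoke Lemma~\ref{lemma:C0_domain_decomposition} directly and then assign a suitable constant value on each resulting rectangle. First I would apply the lemma with oscillation tolerance $\varepsilon$ and cutoff $\delta = \varepsilon$. This yields pairwise disjoint Borel sets $A_1, \dots, A_n$ with $\bigcup_{i=1}^n A_i = X$, pairwise disjoint Borel sets $B_1, \dots, B_m$ with $\bigcup_{j=1}^m B_j = \Omega$, and a Borel set $D_\varepsilon \subseteq X \times \Omega$ such that every product $A_i \times B_j$ falls into exactly one of the two alternatives of the lemma: either $A_i \times B_j \subseteq D_\varepsilon$ with $\phi$ oscillating by at most $\varepsilon$ on it, or $A_i \times B_j \subseteq (X \times \Omega)\setminus D_\varepsilon$ with $|\phi| \leq \varepsilon$ there.

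Next I would choose the coefficients $a_{ij}$ according to the case. If $A_i \times B_j \subseteq D_\varepsilon$, pick any point $(x_{ij}, w_{ij}) \in A_i \times B_j$ and set $a_{ij} = \phi(x_{ij}, w_{ij})$; the oscillation bound then gives $|a_{ij} - \phi(x,w)| \leq \varepsilon$ for all $(x,w) \in A_i \times B_j$. If instead $A_i \times B_j \subseteq (X \times \Omega)\setminus D_\varepsilon$, set $a_{ij} = 0$; the cutoff bound gives $|a_{ij} - \phi(x,w)| = |\phi(x,w)| \leq \varepsilon$ on this rectangle. Because the $A_i$ partition $X$ and the $B_j$ partition $\Omega$, the products $\{A_i \times B_j\}$ partition $X \times \Omega$, so the simple function
\begin{equation}
    \phi_\varepsilon = \sum_{i=1}^n \sum_{j=1}^m a_{ij}\, \mathbbm{1}_{A_i \times B_j}
\end{equation}
is well-defined, and every $(x,w)$ lies in a unique rectangle. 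Combining the two pointwise estimates over all rectangles yields $\sup_{(x,w) \in X \times \Omega} |\phi_\varepsilon(x,w) - \phi(x,w)| \leq \varepsilon$ globally, which is the claim.

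There is essentially no obstacle beyond the case analysis, since all of the topological and compactness content has already been absorbed into Lemma~\ref{lemma:C0_domain_decomposition}. The only point requiring care is the calibration of the two tolerances: one must take the cutoff $\delta$ equal to the oscillation parameter $\varepsilon$ so that setting $a_{ij} = 0$ on the ``far'' rectangles outside $D_\varepsilon$ is consistent with the same overall error bound $\varepsilon$. With this choice the two branches of the estimate match, and the uniform bound follows immediately.
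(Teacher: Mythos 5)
Your proposal is correct and follows essentially the same route as the paper's own proof: both apply Lemma~\ref{lemma:C0_domain_decomposition} with $\delta = \varepsilon$, set $a_{ij}$ to a sampled value of $\phi$ on rectangles inside $D_\varepsilon$ and to $0$ on rectangles outside it, and conclude the uniform bound by the same two-case pointwise estimate. No gaps.
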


\begin{proof}
    Define the sets $\{A_1, \ldots, A_n\}$, $\{B_1, \ldots, B_m\}$, and $D_\varepsilon$ as in Lemma \ref{lemma:C0_domain_decomposition} with $\delta = \varepsilon$. For each $i \in \{1, \ldots, n\}$ and $j \in \{1, \ldots, m\}$, define
    \begin{equation}
    a_{ij} := \begin{cases}
    \phi(\widetilde{x},\widetilde{w}) & \text{for some } (\widetilde{x},\widetilde{w}) \in A_i \times B_j \text{ if } A_i \times B_j \subseteq D_\varepsilon, \\
    0 & \text{otherwise.}
    \end{cases}
    \end{equation}
    Define the simple function
    \begin{equation}
    \phi_{\varepsilon}(x,w) := \sum_{i=1}^n \sum_{j=1}^m a_{ij} \mathbbm{1}_{A_i \times B_j}(x,w).
    \end{equation}
    Note that the sets $A_i$ are pairwise disjoint with $\bigcup_{i=1}^n A_i = X$, and similarly the sets $B_j$ are pairwise disjoint with $\bigcup_{j=1}^m B_j = \Omega$. It follows that
    \begin{equation}
    X \times \Omega = \bigcup_{i=1}^n \bigcup_{j=1}^m A_i \times B_j.
    \end{equation}
    As the sets $A_i \times B_j$ are disjoint, each $(x,w) \in X \times \Omega$ belongs to a unique such set, and thus $\phi_{\varepsilon}(x,w) = a_{ij}$ for the corresponding pair $(i,j)$.
    
    If $(x,w) \in A_i \times B_j \subseteq D_\varepsilon$, then $a_{ij} = \phi(\widetilde{x}, \widetilde{w})$ for some $(\widetilde{x}, \widetilde{w}) \in A_i \times B_j \subseteq D_\varepsilon$, and
    \begin{equation}
    |\phi_{\varepsilon}(x,w) - \phi(x,w)| = |a_{ij} - \phi(x,w)| = |\phi(\widetilde{x},\widetilde{w}) - \phi(x,w)| \leq \varepsilon,
    \end{equation}
    Otherwise, $(x,w) \in (X \times \Omega) \setminus D_\varepsilon$, and since $a_{ij} = 0$, we have
    \begin{equation}
    |\phi_{\varepsilon}(x,w) - \phi(x,w)| = |a_{ij} - \phi(x,w)| = |\phi(x,w)| \leq \varepsilon.
    \end{equation}
    In either case,
    \begin{equation}
    \sup_{(x,w) \in X \times \Omega} |\phi_{\varepsilon}(x,w) - \phi(x,w)| \leq \varepsilon,
    \end{equation}
    completing the approximation.   
\end{proof}

\newpage
\bibliographystyle{plain}
\bibliography{references}

\end{document}